\newsavebox{\@abr}
\newcommand{\llangle}[1][]{\savebox{\@abr}{\(\m@th{#1\langle}\)}%
  \mathopen{\copy\@abr\mkern2mu\kern-0.9\wd\@abr\usebox{\@abr}}}
\newcommand{\rrangle}[1][]{\savebox{\@abr}{\(\m@th{#1\rangle}\)}%
  \mathclose{\copy\@abr\mkern2mu\kern-0.9\wd\@abr\usebox{\@abr}}}
  \newsavebox{\@sbr}
\newcommand{\llsquare}[1][]{\savebox{\@sbr}{\(\m@th{#1{[}}\)}%
  \mathopen{\copy\@sbr\mkern3mu\kern-0.9\wd\@sbr\usebox{\@sbr}}}
\newcommand{\rrsquare}[1][]{\savebox{\@sbr}{\(\m@th{#1{]}}\)}%
  \mathclose{\copy\@sbr\mkern3mu\kern-0.9\wd\@sbr\usebox{\@sbr}}}
	\providecommand{\corollaryname}{Corollary}
	\providecommand{\definitionname}{Definition}
	\providecommand{\examplename}{Example}
	\providecommand{\lemmaname}{Lemma}
	\providecommand{\propositionname}{Proposition}
	\providecommand{\remarkname}{Remark}
	\providecommand{\theoremname}{Theorem}
	\providecommand{\setupname}{Setup}
	\providecommand{\conjecturename}{Conjecture}
	\providecommand{\questionname}{Question}
	\providecommand{\objectivename}{Objective}
	\providecommand{\aimname}{Aim}
	\providecommand{\notationname}{Notation}
	\theoremstyle{plain}
		\newtheorem{thm}{\protect\theoremname}[section] 
		\newtheorem{prop}[thm]{\protect\propositionname}
		\newtheorem{lem}[thm]{\protect\lemmaname}
		\newtheorem{cor}[thm]{\protect\corollaryname}
	\theoremstyle{definition}
		\newtheorem{defn}[thm]{\protect\definitionname}
		\newtheorem{example}[thm]{\protect\examplename}
		\newtheorem{setup}[thm]{\protect\setupname}
		\newtheorem{notn}[thm]{\protect\notationname}
	\theoremstyle{remark}
		\newtheorem{rem}[thm]{\protect\remarkname}
	\numberwithin{figure}{section}
	\numberwithin{equation}{section}
	\tikzset{commutative diagrams/.cd, 
		mysymbol/.style = {start anchor=center, end anchor = center, draw = none}}
    \tikzset{
    labl/.style={anchor=north, rotate=90, inner sep=2mm}
    }
		\newcommand{\rad}{\operatorname{rad}\nolimits}
		\newcommand{\im}{\operatorname{im}\nolimits}
	\newcommand{\maxideal}{\mathfrak{m}}
 	\newcommand{\jacrad}{\mathcal{A}}
\newenvironment{acknowledgements}{
		\begin{abstract}} {\end{abstract}}
\DeclareRobustCommand{\rvdots}{%
  \vbox{
    \baselineskip4\p@\lineskiplimit\z@
    \kern-\p@
    \hbox{.}\hbox{.}\hbox{.}
  }}
  \newcommand{\compactBZhat}{\skew{1}\widehat{\rule{0ex}{1.4ex}\smash{\mathbb{Z}}}}
    \newcommand{\compactBQhat}{\skew{1}\widehat{\rule{0ex}{1.4ex}\smash{\mathbb{Q}}}}
\newcommand\tikznode[2]{\tikz[remember picture,baseline=(#1.base)]{\node(#1)[inner sep=0pt]{#2};}}
\newcommand{\overleftharpoon}[2]{\overset{{}_{\leftharpoonup}}{#1}_{\hspace{-0.2mm}#2}}
\newcommand{\overrightharpoon}[2]{\overset{{}_{\rightharpoonup}}{#1}_{\hspace{-0.2mm}#2}}
\title{String algebras over local rings: 
regular examples}
\author{Raphael Bennett-Tennenhaus}
\address{Raphael Bennett-Tennenhaus\newline
		Department of Mathematics,\newline 
		Aarhus University, 
		Ny Munkegade 118,  
		8000 Aarhus C,  
		Denmark}
\email{raphaelbennetttennenhaus@gmail.com}
\date{}
\keywords{
Admissible ideal, 
path algebra, 
regular local ring,
string algebra}
\subjclass[2020]{16G20, 16G30, 16H10, 13H05}
\begin{document}

\begin{abstract} 
String algebras, in the usual sense, are finite-dimensional algebras over a given ground field.
We recall a generalisation of the definition of a string algebra, which was introduced in a previous paper of the author. 
This generalisation replaces the ground field with a noetherian local ring.
We provide a way to generate examples of string algebras over any regular local ring, which depends on the Krull dimension. 
\end{abstract}

\maketitle

\section{Introduction}
\label{section-intro}

Butler and Ringel \cite{ButRin1987} defined string algebras as $k$-algebras of the form  $kQ/I$, where $kQ$ is the path algebra of a quiver $Q$ over a ground field $k$, with $I$ being admissible, and such that additional combinatorial conditions hold. 
Previously, Wald and Waschb\"{u}sch \cite{WalWas1985} leveraged the module classification of string algebras from \cite{ButRin1987} to derive a corresponding classification for the special-biserial algebras introduced by Skowro\'{n}ski  and Waschb\"{u}sch \cite{SkoWas1983}. 
Assem and Skowro\'{n}ski  \cite{AssSko1986} introduced gentle algbras, a specific class of string algebras known for the more manageable behavior of their derived category. 
String algebras have been the testing ground for a wide range of instruments from representation theory and other related topics in algebra; see for example  
\cite{Baur-Coelho-Simoes-A-geometric-model-for-the-module-category-of-a-gentle-algebra, BekMer2003, BruDouMouThoYil2020, CanSch2021, CanPauSch2021, ChaGuaSua2023, FraGirRiz2021, GirRueVel2022, GupKubSar2022, LabFragSchVal2022, Kalck-Singularity-categories-of-gentle-algebras, PalPilPla2021, Schroer-Modules-without-self-extensions-over-gentle-algebras}. 
In \cite{benn-tenn-string-alg-over-loc} we introduced the notion of a string algebra over a noetherian local ground ring. 
In this article we give a way of generating examples. 

\subsection{Admissible ideals}

For a noetherian local ring $(R,\maxideal,k)$ we write  $\maxideal Q$ for the kernel of the canonical  $R$-algebra map $RQ\to kQ$ and we let $A=\bigoplus_{\ell\geq 1}A_{\ell}$ where  $A_{\ell}\subseteq RQ$ is the $R$-span of the length-$\ell$ paths. 
We say that  an ideal  $I$ of $RQ$ is \emph{admissible} provided there are integers $m,n>0$ such that
\[
\begin{array}{cccc}
     I\subseteq A+ \maxideal Q,
     & 
     I\cap A \subseteq A^{2}+ \maxideal Q\cap A,
     &
     A^{m}\subseteq I+ \maxideal Q,
     & 
     \maxideal Q\subseteq I+\sum_{\ell=1}^{n} A_{\ell}.
\end{array}
\]
On the one hand, any semiperfect, module-finite and elementary $R$-algebra is the quotient $RQ/I$ of a path algebra by an admissible ideal; see \cite[Theorem 1.1]{benn-tenn-string-alg-over-loc}. 
On the other hand, any such quotient is semiperfect, module-finite and elementary - and furthermore, $Q$ is the Gabriel quiver of $RQ/I$ when $R$ is $\maxideal$-adically complete; see \cite[Theorem 1.2]{benn-tenn-string-alg-over-loc}. 
Admissible ideals were studied by Raggi-C\'{a}rdenas and Salmer\'{o}n \cite{CarSal1987}. 

\subsection{String algebras over local rings. }
We say that quiver $Q$ is \emph{biserial} provided each vertex is the head (respectively, tail) of at most $2$ distinct arrows.
We say that an ideal $I$ of $RQ$ is \emph{special} if, for any arrow $b$, there is at most $1$ arrow $a$ such that $t(a)=h(b)$ and $ab\notin I$ (respectively, at most $1$ arrow $c$ with $h(c)=t(b)$ and $bc\notin I$). 
We say $I$ is \emph{arrow}-\emph{direct} if for any arrow $a$ we have
\[
\begin{array}{cc}
    RQ a \cap \sum_{x} RQ x\subseteq I,
    &
    aRQ  \cap \sum_{y} yRQ \subseteq I,
\end{array}
\]
where sums run through the arrows $x$ (respectively, $y$)  such that $t(x)=t(a)$ (respectively, $h(y)=h(a)$). 

By a \emph{string algebra} over a local ring $R$ we mean an $R$-algebra of the form $RQ/I$ where $Q$ is a biserial quiver and $I$ is an ideal in $RQ$ that is admissible, special and arrow-direct; see \cite[Definition 1.4]{benn-tenn-string-alg-over-loc}. 

Since $Q$ is finite, our notion of a  string algebra over a field coincides with that of a  \emph{string algebra} as in Butler and Ringel  \cite{ButRin1987}; see \cite[Proposition 6.2]{benn-tenn-string-alg-over-loc}.  
\Cref{thm-examples-of-string-algebras} below provides a way of generating  examples of string algebras over regular local rings. 
 We fix some terms and notation.  
\begin{itemize}
    \item A \emph{biserial} quiver is one where each vertex is the head (respectively tail) of at most two arrows. 
    \item A set $Z$ of paths in a quiver $Q$ defines a \emph{special pair} $(Q,Z)$ if for any arrow $b$ there is at most one arrow $a$  with $t(a)=h(b)$ and $ab\notin Z$ (respectively, at most one arrow $c$ with $h(c)=t(b)$ and $bc\notin Z$).

    For the remaining terms and notation below we suppose $(Q,Z)$ is a special pair. 
    \item A $Z$-\emph{inadmissible} path has the form $pzq$ for paths $p$, $q$ and $z\in Z$, and otherwise it is $Z$-\emph{admissible}.  
     \item A non-trivial cycle $c$ is $Z$-\emph{primitive} provided each of its powers $c^{n}$ is $Z$-admissible, and provided $c$ itself cannot be written as a non-trivial power of another $Z$-admissible cycle of shorter length. 
     \item A  non-trivial path $p=a_{n}\dots a_{1}$ with arrows $a_{i}$ defines a set of vertices \emph{traversed by} $p$, namely
     \[
     V(p)=\{h(a_{n}),t(a_{1})\}\cup\{ h(a_{i})=t(a_{i+1})\colon 1\leq i<n\}.
     \] 
    \item A \emph{primitive} vertex is one in  the union  $V=\bigcup_{c} V(c)$ where $c$ runs through the $Z$-primitive cycles. 
    The corresponding \emph{primitive}-\emph{nerve} partition $V=V[1]\sqcup \dots\sqcup V[n]$ is defined by  transitively closing the reflexive and symmetric relation $\bigcup_{c}V(c)\times V(c) 
    $ on $V$, and partitioning   by equivalence classes.  
\end{itemize} 

\begin{example}
\label{example-introduction}
    Let $Q$ be the quiver, and $Z$ be the set of relations, defined by
    \[
\begin{array}{ccc}
Q\quad=\quad
\begin{tikzcd}
1\arrow[out=100,in=160,loop,  "", "a"',distance=0.6cm]\arrow[r,"x", swap]
&
2\arrow[l,"y", swap, bend right]\arrow[r,"w", swap]
&
3
\arrow[l,"z", swap, bend right]
&
4
\arrow[l,"f", swap]\arrow[out=120,in=60,loop, "b",distance=0.45cm]
&
5
\arrow[l,"g", swap]\arrow[out=80,in=20,loop, "c",distance=0.6cm]
\end{tikzcd}
&
\quad
&
Z=\quad\{
a^{2},\,
xy,\,
wx,\,
yz,\,
zf,\,
fb,\,
bg,\,
gc,\,
c^{\ell}\}
\end{array}
\]
\end{example}
where $\ell>0$ is some fixed integer. 
The $Z$-primitive cycles are $ayx$, $yxa$, $xay$, $wz$, $zw$, and $b$
and we have
\[
\begin{array}{ccc}
V(ayx)=V(yxa)=V(xay)=\{1,2\},
&
V(wz)=V(zw)=\{2,3\},
&
V(b)=\{4\} 
. 
\end{array}
\]
Hence 
$V =\{1,2\}\cup\{2,3\}\cup \{4\}$, and $\bullet-\bullet\quad\bullet$ depicts the graph defined by the nerve of the subsets above in $V$, since $\{1,2\}\cap\{2,3\}\neq \emptyset$ and $\{1,2\}\cap\{4\}=\emptyset=\{2,3\}\cap \{4\}$. Hence  $V[1]=\{1,2,3\}$ and $V[2]=\{4\}$.


\begin{thm}
\label{thm-examples-of-string-algebras}
Let $Z$ be a set of paths of length at least $2$ such that $(Q,Z)$ is special, $V=V[1]\sqcup \dots\sqcup V[n]$ be the primitive-nerve partition,  $(R,\maxideal,k)$ be a regular local ring of dimension $n$, $( s_{1},\dots,s_{n}) $ be a regular sequence of generators of $\maxideal$, and define $\sigma_{v}\in RQ$ by the sum of the $Z$-primitive cycles at  $v\in V$. 
Let 
 \[
\begin{array}{cc}
 I = \langle Z\rangle 
+
\sum_{i=1}^{n}
\langle s_{i}e_{v}-\sigma_{v}\,\mid\, v\in V[i]\rangle 
+
\langle s_{i}e_{v}\,\mid\, v\in Q_{0}\setminus V[i]\rangle. 
& 
(*)
\end{array}
 \]
Any $Z$-admissible path lies outside $I$, and if $Q$ is biserial then $RQ/I$ is a string algebra over $R$.
\end{thm}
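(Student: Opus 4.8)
The plan is to study $B=RQ/I$, with quotient map $\pi\colon RQ\to B$, as an $R$-module, the decisive ingredient being a length-reducing identity for paths that wind twice around a primitive cycle. The relations $s_ie_v-\sigma_v$ and $s_ie_v$ say exactly that $s_i\pi(p)=\pi(p\,\sigma_{t(p)})=\pi(\sigma_{h(p)}\,p)$ when $t(p)\in V[i]$, and $s_i\pi(p)=0$ otherwise, for every path $p$. I would first record a consequence of the special-pair axiom: distinct $Z$-primitive cycles $c\ne c'$ based at one vertex $v$ have $Z$-inadmissible composite $c'c$. Indeed, applying the axiom to the last arrow of $c$ and using that $c^{2}$ is $Z$-admissible, the first arrow of $c$ is the only arrow out of $v$ that can follow the last arrow of $c$ in a $Z$-admissible path; so if $c'$ starts with a different arrow then $c'c$ has a subpath in $Z$, while if $c'$ starts with the same arrow then iterating the axiom forces $c'=c$. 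Combined with the relations, this yields the \emph{reduction identity}: for a $Z$-primitive cycle $c$ at $v\in V[i]$ and paths $p_1,p_2$ with $h(p_1)=t(p_2)=v$,
\[
\pi(p_2\,c\,c\,p_1)=\pi(p_2\,\sigma_v\,c\,p_1)-\!\!\sum_{c'\ne c}\pi(p_2\,c'\,c\,p_1)=s_i\,\pi(p_2\,c\,p_1),
\]
since every summand with $c'\ne c$ contains the $Z$-subpath witnessing that $c'c$ is inadmissible and so vanishes.

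Next I would show that every $Z$-admissible path of length above a bound $L_0(Q,Z)$ contains two consecutive occurrences of a single $Z$-primitive cycle, so the identity rewrites its class as $s_i$ times the class of a strictly shorter $Z$-admissible path; iterating, $B$ is spanned over $R$ by the finitely many $Z$-admissible paths with no such repetition, hence is module-finite over $R$. A closer look at the relations $\sigma_v\equiv s_ie_v$ and $s_ie_v\equiv 0$ — which leave $B$ free over the one-dimensional regular quotient $R/(s_j:j\ne i)$ in the directions ``internal to'' block $i$, and a $k$-vector space in the remaining directions — pins down $B$ well enough to see that $\pi$ is injective on $Z$-admissible paths: such a path reduces to a monomial in $s_1,\dots,s_n$ times a normal-form path, with nonzero right-hand side because the relevant quotient of $R$ is still a domain. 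This proves that $Z$-admissible paths lie outside $I$.

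For the string-algebra claim, $Q$ is biserial by hypothesis; $I$ is special because $\langle Z\rangle\subseteq I$ gives $\{a:t(a)=h(b),\,ab\notin I\}\subseteq\{a:t(a)=h(b),\,ab\notin Z\}$, a set of size at most one by the special-pair axiom (and symmetrically on the other side); and $I$ is arrow-direct because $RQ\,a\cap RQ\,x=0$ for distinct arrows $a,x$ in a path algebra, so the relevant intersections vanish by biseriality. For admissibility, $I\subseteq A+\maxideal Q$ and $I\cap A\subseteq A^{2}+\maxideal Q\cap A$ are checked on generators; the one delicate point — that no arrow of $Q$ enters $I$ modulo $\maxideal Q$ and higher degree — uses that $s_1,\dots,s_n$ is a regular sequence, so no $R$-relation among the constant terms $s_ie_v$ of the generators can cancel them without producing an $\maxideal$-multiple of an arrow. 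The last two conditions follow from module-finiteness: $A^{m}\subseteq I+\maxideal Q$ because $B/\maxideal B=kQ/\langle\bar Z,\bar\sigma_v:v\in V\rangle$ is then finite-dimensional with nilpotent arrow ideal, and $\maxideal Q\subseteq I+\sum_{\ell\le N}A_\ell$ because $B$ has $R$-module generators of bounded length.

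The main obstacle is the combinatorial input of the second paragraph — that a sufficiently long $Z$-admissible path must repeat a single primitive cycle verbatim, and that those without such a repetition are finite in number. This is exactly where biseriality and the fine structure of special pairs are used: the $Z$-admissible walks through a vertex are restricted enough that a long one cannot indefinitely alternate between two primitive cycles sharing a vertex (forbidden by the composability fact above) nor wander through the non-primitive part (which carries no admissible cycle), so it is eventually forced to repeat a cycle. The hypothesis that $\dim R$ equals the number of primitive-nerve blocks enters here too: exactly one regular parameter is attached to each block $V[i]$, so the reduction identity assigns a consistent ``value of one loop'' within each block with nothing over- or under-counted. I expect the bookkeeping to be most delicate at primitive vertices lying on several primitive cycles, where only the \emph{sum} $\sigma_v$ — not the individual cycles — is identified with $s_ie_v$; the regular-sequence hypothesis should be what rules out spurious collapses there.
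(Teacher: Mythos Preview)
Your outline has the right shape for module-finiteness and the ``bounded below'' part of admissibility, and your reduction identity $\pi(p_2\,c^2\,p_1)=s_i\,\pi(p_2\,c\,p_1)$ is exactly the mechanism the paper exploits. But two steps are genuinely broken.

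\textbf{Arrow-directness.} You write that $I$ is arrow-direct ``because $RQ\,a\cap RQ\,x=0$ for distinct arrows $a,x$ in a path algebra.'' That equality is true in $RQ$, but it is not the condition required. Arrow-directness asks that $\Lambda a\cap\sum_{b\neq a}\Lambda b=0$ in the \emph{quotient} $\Lambda=RQ/I$; equivalently, whenever $x_a a-\sum_{b\neq a} x_b b\in I$ one must conclude $x_a a\in I$. Nothing about the path basis of $RQ$ gives this: the relations $s_ie_v-\sigma_v$ genuinely mix paths with different right arrows (a primitive vertex may carry two primitive cycles with distinct first arrows, so $\sigma_v$ already has two right arrows), and one must argue that any such $I$-dependence can be split arrow-by-arrow. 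The paper does this via a canonical form for elements of $e_uIe_w$ (its Lemma~2.22/2.25), and the regular-sequence hypothesis is used there, not just for admissibility. Biseriality is irrelevant to this point.

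\textbf{$Z$-admissible paths lie outside $I$.} Your argument reduces a long $Z$-admissible path to $s_i^{t}$ times a short one and then asserts that the normal-form paths are ``free over $R/(s_j:j\neq i)$ in the directions internal to block $i$.'' But that freeness is precisely the content of the claim you are trying to prove: you must rule out $R$-linear relations among short $Z$-admissible paths modulo $I$, and your reduction identity, which only rewrites \emph{long} paths, says nothing about this. The paper takes a different route: it shows (Lemma~2.22) that every element of $e_uIe_w$ can be written, modulo $\langle Z\rangle$, as $\sum_q t_q q$ with coefficients $t_q$ of a very specific shape in the $s_j$ and in auxiliary scalars $r_{j,q}$, and then (Theorem~2.23) uses the regular-sequence property \emph{inductively on path length} to force the minimal nonzero coefficient into $\maxideal$. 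This is where regularity really bites, and your sketch does not supply a substitute for it.

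The remaining pieces---specialness from $\langle Z\rangle\subseteq I$, bounded-belowness from module-finiteness, bounded-aboveness via ``no arrow lies in $I$''---are handled as you indicate, but note that the paper deduces bounded-aboveness \emph{from} arrow-directness (via an arrow-distinct criterion), so your gap in arrow-directness propagates there too.
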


\begin{example}
\label{exampple-introduction-again}
Let $\compactBZhat_{p}$ be the ring of $p$-adic integers and consider the Iwasawa algebra $R=\compactBZhat_{p}[\vert t\vert ]$, which is a regular local ring of Krull dimension $2$. 
Here $\maxideal=\langle p,t\rangle$ is the maximal ideal of $R$, and the residue field is $k=R/\maxideal\cong\mathbb{F}_{p}$. 
Let $s_{1}=p$ and $s_{2}=t$. 
For this choice of $R$ we apply \Cref{thm-examples-of-string-algebras} in the context of \Cref{example-introduction}, which can be done since the number of parts in the primitive-nerve partition is $2$. 
Here 
 \[
\begin{array}{c}
 I=\langle 
Z\rangle 
+\langle 
pe_{1}-ayx-yxa,\,
pe_{2}-xay-zw,\,
pe_{3}-wz,\,
te_{4}-b
\rangle
+\langle
te_{1},\,
te_{2},\,
te_{3},\,
te_{5},\,
pe_{4},\,
pe_{5}
\rangle.
\end{array}
 \]
In \Cref{example-intro-example-revisited} we describe the ring $\Lambda=RQ/I$. This a string algebra over $R$ by \Cref{thm-examples-of-string-algebras}. 
\end{example}

\begin{prop}
    \label{cor-rickes-examples}
    Let $\Gamma=\overline{KQ}\,/\,\overline{\langle Z\rangle}$ be a completed string algebra in the sense defined by Ricke \cite[p.~28]{Ric2017}. Then  $\Gamma\cong RQ/I$ where $R=K[\vert t_{1},\dots t_{n}\vert ]$ and where $I$ has the form $(*)$ from \Cref{thm-examples-of-string-algebras}. 
\end{prop}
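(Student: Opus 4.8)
The plan is to take $n$ to be the number of blocks of the primitive-nerve partition $V=V[1]\sqcup\dots\sqcup V[n]$ of the given special pair $(Q,Z)$ (recall that a completed string algebra à la Ricke has $Q$ biserial and $Z$ consisting of paths of length $\geq 2$), to set $R=K[\vert t_1,\dots,t_n\vert]$ and $s_i=t_i$, and to observe that $R$ is a regular local ring of Krull dimension $n$ with $(s_1,\dots,s_n)$ a regular system of parameters. Then \Cref{thm-examples-of-string-algebras} applies and produces the ideal $I$ of shape $(*)$, with $RQ/I$ a string algebra over $R$, and what remains is to exhibit a ring isomorphism $\Gamma\cong RQ/I$. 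I would do this by constructing mutually inverse $K$-algebra homomorphisms $\bar\Psi\colon\Gamma\to RQ/I$ and $\Phi\colon RQ/I\to\Gamma$.

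For $\bar\Psi$: since $I$ is admissible, $RQ/I$ is module-finite over $R$ by \cite[Theorem 1.2]{benn-tenn-string-alg-over-loc}, hence $\maxideal$-adically complete (as $R$ is) and Hausdorff (by Krull's intersection theorem, $\maxideal(RQ/I)$ lying in the radical). The inclusions $A^m\subseteq I+\maxideal Q$ and the fact that in $RQ/I$ each $s_ie_v$ equals $\sigma_v$ or $0$, hence lies in the arrow ideal $\mathfrak a$, together show that the $\maxideal$-adic and $\mathfrak a$-adic topologies on $RQ/I$ coincide. Consequently the canonical map $KQ\to RQ/I$ is continuous for the arrow-adic topology and extends uniquely to a continuous $K$-algebra map $\overline{KQ}\to RQ/I$; it annihilates $Z$, hence the closed ideal $\overline{\langle Z\rangle}$, and so descends to $\bar\Psi$. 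This map fixes vertices and arrows and sends the image of $\tau_i:=\sum_{v\in V[i]}\sigma_v\in\overline{KQ}$ to $\sum_{v\in V[i]}\sigma_v=s_i$ in $RQ/I$.

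For $\Phi$: work with $\tau_i\in\Gamma$. One has $\sigma_v\sigma_w=0$ for $v\neq w$, so $\tau_i\tau_j=0$ for $i\neq j$ and the $\tau_i$ pairwise commute; they lie in $\mathfrak a\Gamma$, hence are topologically nilpotent; and $\tau_ie_v=\sigma_v=e_v\tau_i$ for $v\in V[i]$ while $\tau_ie_v=0=e_v\tau_i$ otherwise. Granting the commutation $\tau_i a=a\tau_i$ in $\Gamma$ for every arrow $a$ (see below), the assignments $t_i\mapsto\tau_i$ define a continuous $K$-algebra map $R\to\Gamma$ whose image centralises the image of $KQ\to\Gamma$, so these assemble into a $K$-algebra map $\Phi_0\colon RQ=R\otimes_K KQ\to\Gamma$. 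By the identities just listed, $s_ie_v-\sigma_v$ ($v\in V[i]$) and $s_ie_v$ ($v\notin V[i]$) lie in $\ker\Phi_0$, and $\langle Z\rangle\subseteq\ker\Phi_0$ because $Z$ maps into $\overline{\langle Z\rangle}=0$; hence $\Phi_0$ factors through $\Phi\colon RQ/I\to\Gamma$. Now $\bar\Psi$ and $\Phi$ fix all vertices and arrows and interchange $s_i$ and $\tau_i$; since $\Gamma$ is topologically generated by its vertices and arrows, and $RQ/I$ by these together with $R$ — which lies in the closed subalgebra they generate, as $s_i=\sum_{v\in V[i]}\sigma_v$ — continuity forces $\bar\Psi\Phi=\mathrm{id}$ and $\Phi\bar\Psi=\mathrm{id}$, so $\Gamma\cong RQ/I$. (Surjectivity of each map also follows directly from $\bar\Psi(\tau_i)=s_i$, $\Phi(s_i)=\tau_i$ and continuity.)

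The one genuinely combinatorial point — and the main obstacle — is the commutation $\tau_i a=a\tau_i$ in $\Gamma$. For $a\colon u\to v$ the computations above give $\tau_i a=[v\in V[i]]\,\sigma_v a$ and $a\tau_i=[u\in V[i]]\,a\sigma_u$ in $\Gamma$, so it suffices to analyse $\sigma_v a$ and $a\sigma_u$ modulo $\overline{\langle Z\rangle}$. Here the special-pair condition is decisive: for a $Z$-primitive cycle $c=b_m\cdots b_1$ at $v$, the "seam'' subpath of $ca$ is forced into $Z$ unless $a$ is the incoming arrow $b_m$ of $c$ at $v$ — because if $a$ is the other arrow into $v$, then since $b_1b_m\notin Z$ (as $c^2$ is $Z$-admissible) the second part of the special condition applied to $b_1$ forces $b_1a\in Z$; and symmetrically $ac'$ survives for a $Z$-primitive cycle $c'$ at $u$ only when $a$ is the outgoing arrow of $c'$. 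Moreover such a surviving $c$ is unique (its predecessor arrows are determined one at a time by the special condition), and the $c$ at $v$ ending in $a$ is precisely the rotation to base $u$ of the $c'$ at $u$ beginning with $a$, so $ca=ac'$ as paths and $u,v\in V(c)=V(c')$ lie in the same block. A short case check on whether this common cycle exists and which block it lies in then yields $\tau_i a=a\tau_i$ in every case. This case analysis is routine given the combinatorics of $Z$-admissible paths and the primitive-nerve partition already developed for \Cref{thm-examples-of-string-algebras}; the rest of the argument is formal.
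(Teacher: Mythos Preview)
Your argument is correct, and it takes a genuinely different route from the paper's own proof. The paper proceeds by showing, for each $d>0$, a ring isomorphism $kQ/(H+B^{d})\cong RQ/(I+A^{d})$ (\Cref{lem-existence-of-the-maps}), and then passes to the inverse limit using \Cref{rem-complete-ring-isos} and \Cref{rem-completeness-gives-completeness}; the delicate step there is the injectivity of each level-wise map, which is extracted from \Cref{thm-coeff-prim-relations-give-non-zero-admissible-paths} and hence from the full force of \Cref{lem-canonical-form-in-I} and the regular-sequence calculus. You instead construct a map in each direction and verify they are mutually inverse on topological generators. The combinatorial core of your argument---the commutation $\tau_{i}a=a\tau_{i}$ in $\Gamma$---is precisely the content of \Cref{lem-prim-cycles-swapping-with-arrows} and \Cref{lem-CB-sum-of-prim-cycles} together with the definition of the primitive-nerve partition (\Cref{rem-equiv-rel-prim-cycles}), so your ``main obstacle'' is already resolved by that earlier, comparatively soft machinery; you never need \Cref{thm-coeff-prim-relations-give-non-zero-admissible-paths} directly, only the admissibility of $I$ supplied by \Cref{thm-examples-of-string-algebras}. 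The payoff of the paper's approach is the stand-alone \Cref{lem-existence-of-the-maps}, an explicit identification of the finite quotients that may be of independent use; the payoff of yours is a shorter, more transparent proof of \Cref{cor-rickes-examples} in which injectivity falls out of the two-sided inverse rather than being argued coefficient-by-coefficient.
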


The paper is structured as follows. 
In \S\ref{sec-primitive-cyces-and-nerve-partitions} we build background results needed for \S\ref{subsec-prim-reg-rad-collections}, where primitive-nerve partitions are discussed and analysed. 
In \S\ref{sec-string-algs} we prove \Cref{thm-examples-of-string-algebras} and \Cref{cor-rickes-examples}. 
In \S\ref{sec-examples} we use \Cref{thm-examples-of-string-algebras} to  produce examples of string algebras over mixed characteristic local rings.

\section{Zero-relations and primitive cycles}
\label{sec-primitive-cyces-and-nerve-partitions}


We will use language commonly used when dealing with path algebras of quivers, such as \emph{non}-\emph{trivial paths}. 
We also use terminology which is not widely used, such as \emph{arrow}-\emph{distinct} and \emph{initial subpaths}. 
Although they are somewhat self-explanatory, we specify explicitly our  meaning of all of these phrases below. 
This is done all at once for brevity in the sequel, and the author intends the lists below to work as a dictionary.
\begin{notn}
    Notation used for \emph{quivers} $Q=(Q_{0},Q_{1},h,t)$ make up the next four items.
\begin{itemize}
    \item $Q_{0}$ is the set of vertices and $Q_{1}$ is the set of arrows. 
    \item $h\colon Q_{1}\to Q_{0}$ is the head function, sending any arrow $a$ to its head $h(a)$. 
    \item $t\colon Q_{1}\to Q_{0}$ is the tail function, sending any arrow $a$ to its tail $t(a)$. 
\end{itemize}
\end{notn}

In  \S\ref{sec-primitive-cyces-and-nerve-partitions} we always let $Q=(Q_{0},Q_{1},h,t)$ be a quiver whose sets $Q_{0}$ and $Q_{1}$ of vertices and edges are finite. 

\begin{defn}
\label{defn-all-the-words}
    Terminology used for 
\emph{paths} make up the next six items.
\begin{itemize}
    \item \emph{Trivial} (\emph{length}-$0$)  paths $e_{v}$  are defined for each vertex $v$. We let $h(e_{v})=v=t(e_{v})$.
    \item \emph{Non}-\emph{trivial paths of length} $n\geq 1$, denoted  $a_{n}\dots a_{1}$, are defined for any \emph{consecutive} collection of arrows $a_{i}$ $(i=1,\dots,n)$,  meaning $t(a_{i+1})=h(a_{i})$ for $1\leq i<n$. 
    \item The \emph{head} and \emph{tail} of a non-trivial path $a_{n}\dots a_{1}$ are $h(a_{n})$ and $t(a_{1})$ respectively. 
    \item \emph{Composing} paths $p$ of length $n$ and $q$ of length $m$ with $t(p)=h(q)$, denoted $pq$, is defined by $pq=p$ if $m=0$, $pq=q$ if $n=0$, and arrow concatenation  if $m,n>0$. 
    \item A path \emph{traverses} a vertex $v$ if said path has the form $pq$ where $t(p)=v=h(q)$. 
    \item A path $a_{n}\dots a_{1}$ is said to  \emph{have distinct arrows} provided  $a_{i}\neq a_{j}$ for $i\neq j$. 
    \end{itemize}
    
    Terminology used for 
\emph{subpaths} make up the next five items.
    \begin{itemize}
    \item \emph{Subpaths} of a path $p$ are the paths $z$ satisfying $p=qzr$ for some paths $q,r$. 
    \item \emph{Right} subpaths of a path $p$ are the paths $z$ satisfying $p=qz$ for some path $q$. 
    The \emph{right arrow} of a non-trivial path $p$ is the unique length-$1$ right subpath of $p$. 
    \item \emph{Left} subpaths of a path $p$ are the paths $z$ satisfying $p=zr$ for some path $r$. 
    The \emph{left arrow} of a non-trivial path $p$ is the unique terminal length-$1$ subpath of $p$. 
\end{itemize}

Terminology and notation concerning \emph{cycles} make up the next three items.
\begin{itemize}
    \item \emph{Cycles}  are non-trivial paths with the same head and tail.
    \item The \emph{incidence} of a cycle $p$ is the vertex $v(p)$ which satisfies $h(p)=v(p)=t(p)$. 
    \item A \emph{rotation} of a cycle $c=c_{n}\dots c_{1}$ refers to $c$ or to $ c_{i}\dots c_{1}c_{n}\dots c_{i+1}~(1\leq i<n)$.
    \item A \emph{non}-\emph{trivial power} of a cycle $c$ refers to $c^{k}=c\dots c$ for some $k> 1$. A \emph{power} of $c$ refers either to $c$ or to a non-trivial power of $c$. 
\end{itemize}

Terminology concerning a fixed set $Z$ of paths in $Q$ make up the next two items. 
\begin{itemize}
    \item A paths $p$ is $Z$-\emph{admissible}  if no element of $Z$ arises as a subpath of $p$, denoted $Z\nmid p$. Otherwise  some $z\in Z$ is a subpath $p$, and $p$ \emph{factors through} $Z$, denoted $Z\mid p$. 
    \item A cycle $c$ is $Z$-\emph{primitive} if $c$ is not a power of shorter cycle and each power of $c$ is  $Z$-admissible. 
\end{itemize}
\end{defn}

 
%

\begin{example}
    \label{example-running-zeroth-non-trivial}
 Let $Q$ be the quiver, and let $Z$ be the set of zero-relations, defined by
    \[
\begin{array}{c}
Q=\quad
\begin{tikzcd}
1\arrow[out=100,in=160,loop,  "", "x"',distance=0.6cm]
\arrow[r,swap,"b_{1}"]
&
2\arrow[l,"a_{5}", swap, bend right]
&
3\arrow[out=120,in=60,loop, "y",distance=0.45cm]
\arrow[l,"a_{4}", swap]
&
4\arrow[l,"a_{3}", swap]
\arrow[r,swap,"b_{2}"]
&
5\arrow[l,"a_{2}", swap, bend right]
\arrow[r,swap,"b_{3}"]
&
6\arrow[out=80,in=20,loop, "z",distance=0.6cm]
\arrow[l,"a_{1}", swap, bend right]
\end{tikzcd}\vspace{2mm}
\\
Z=\quad\{a_{i+1}a_{i}\mid 1\leq i\leq 4\}\cup \{b_{1}x,\,xa_{5},\,a_{4}y,\,y^{4},\,ya_{3},\,b_{3}b_{2},\,a_{1}b_{3},\,z^{2}\}.
\end{array}
\]
Here the $Z$-primitive cycles are  $x$, $a_{5}b_{1}$, $b_{1}a_{5}$, $b_{2}a_{2}$, $a_{2}b_{2}$,  $zb_{3}a_{1}$,  $a_{1}zb_{3}$  and $b_{3}a_{1}z$. 
\end{example}

\subsection{Special zero-relations}

We begin by understanding some of the terminology given in \Cref{defn-all-the-words}. 

\begin{lem}
\label{lem-cycles-with-admissible-powers-are-powers-of-primitive-ones}
If $c$ is a cycle and every power of $c$ is  $Z$-admissible, then $c$ is a power of a $Z$-primitive cycle. 
\end{lem}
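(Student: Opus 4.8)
The plan is to pass to a shortest generating cycle and show that it is $Z$-primitive. First I would form the set of cycles $d$ such that $c$ is a power of $d$, taken in the inclusive sense, so that $c$ itself belongs to this set; every such $d$ has length at most the length of $c$, so the set is finite and nonempty, and I fix a member $d$ of minimal length, writing $c=d^{k}$ with $k\geq 1$. Here $d$ is genuinely a cycle, since in the composite $d^{k}=d\cdots d$ consecutiveness forces $t(d)=h(d)$.

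Next I would verify the two defining properties of a $Z$-primitive cycle for $d$. For the first, suppose towards a contradiction that $d=e^{m}$ for some cycle $e$ of strictly smaller length and some $m>1$; then $c=d^{k}=e^{mk}$, exhibiting $c$ as a power of $e$ and contradicting the minimality of the length of $d$. Hence $d$ is not a power of a shorter cycle. For the second, fix $n\geq 1$. Then $d^{nk}=(d^{k})^{n}=c^{n}$, and since $k\geq 1$ the path $d^{n}$ is a right subpath of $d^{nk}=c^{n}$ (with the complementary left factor being a trivial path in the degenerate case $k=1$). By hypothesis $c^{n}$ is $Z$-admissible, and $Z$-admissibility is inherited by subpaths: if some $z\in Z$ were a subpath of $d^{n}$, then transitivity of the subpath relation would make $z$ a subpath of $c^{n}$, a contradiction. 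So $d^{n}$ is $Z$-admissible, and as $n$ was arbitrary every power of $d$ is $Z$-admissible. Therefore $d$ is $Z$-primitive, and $c=d^{k}$ is a power of it, as required.

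I do not expect a real obstacle here; this is essentially the standard ``primitive root of a cyclic word'' argument, and the only care needed is bookkeeping around the conventions of \Cref{defn-all-the-words}: that ``power'' is meant inclusively (so the conclusion also covers the case where $c$ is already $Z$-primitive, namely $k=1$), that the minimal-length selection is legitimate because path lengths are positive integers bounded above by the length of $c$, and that the subpath relation is transitive so that $Z$-admissibility descends to subpaths. A slightly more conceptual variant would instead invoke uniqueness of the primitive root together with induction on the length of $c$, but the minimal-length selection appears to be the cleanest route.
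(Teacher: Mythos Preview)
Your proof is correct and follows essentially the same idea as the paper's: both reduce to a shortest cycle $d$ with $c=d^{k}$ and verify that $d$ is $Z$-primitive by observing that every power of $d$ is a subpath of some power of $c$. The only difference is organizational---the paper packages this as an induction on the length of $c$, while you select a minimal-length $d$ directly (and you even flag the inductive variant yourself at the end).
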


\begin{proof}
There is nothing to prove if $c$ is $Z$-primitive. 
So suppose $c$ is not $Z$-primitive. 
By assumption $c=d^{k}$ for some cycle $d$, necessarily  $Z$-admissible, where $k>1$. 
The claim now follows by a straightforward induction on the length of $c$. 
To see this, note that the base case is immediate: if $c$ is a loop that is  $Z$-admissible then it is $Z$-primitive. For the induction step note that $d$ is shorter than $c$, and any power of $d$ is a subpath of a power of $c$ and so  $Z$-admissible. 
\end{proof}

\begin{lem}
\label{lem-arrow-distinct-primitive-iff-rotation-also}
Any rotation of a $Z$-primitive cycle is  $Z$-primitive. 
\end{lem}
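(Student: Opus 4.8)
The plan is to verify that both conditions in the definition of $Z$-primitivity (\Cref{defn-all-the-words}) are preserved under a single rotation step. Since $c$ is trivially a rotation of itself, it suffices to consider a non-trivial rotation $d=c_{i}\dots c_{1}c_{n}\dots c_{i+1}$ of a $Z$-primitive cycle $c=c_{n}\dots c_{1}$ with $1\leq i<n$; note that $d$ is again a cycle, of length $n$. Throughout I would use the elementary fact that a subpath of a $Z$-admissible path is $Z$-admissible, since any $z\in Z$ that is a subpath of some subpath of a path $p$ is in particular a subpath of $p$.

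For the condition that every power of $d$ be $Z$-admissible, the key point is that $d^{k}$ occurs as a subpath of $c^{k+1}$ for every $k\geq 1$. Concretely, one has the factorisation $c^{k+1}=\bigl(c_{n}\dots c_{i+1}\bigr)\,d^{k}\,\bigl(c_{i}\dots c_{1}\bigr)$: expanding the right-hand side and using that the word $c^{k+1}$ is periodic with period $n$ shows that its middle block, which has length $kn$, is exactly $d^{k}$. Since every power of $c$ is $Z$-admissible by hypothesis, so is every power of $d$.

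For the condition that $d$ not be a power of a shorter cycle, I would first note that the relation ``is a rotation of'' is symmetric on cycles, as rotation by $n-i$ inverts rotation by $i$; hence $c$ is itself a rotation of $d$. Suppose toward a contradiction that $d=f^{m}$ for some cycle $f$ and some $m>1$, so that $|f|=n/m<n$. A routine check shows that any rotation of $f^{m}$ equals $g^{m}$ for some rotation $g$ of $f$: indeed $f^{m}$ has period $|f|$, so a cyclic shift of it by $r<|f|$ positions is the $m$-th power of the analogous cyclic shift of $f$. Applying this to the rotation $c$ of $d=f^{m}$ yields $c=g^{m}$ with $g$ a cycle of length $|f|<n=|c|$ and $m>1$, contradicting the assumption that $c$ is not a power of a shorter cycle. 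Hence $d$ is not a power of a shorter cycle either, and together with the previous paragraph this shows $d$ is $Z$-primitive.

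The only genuine work is the index bookkeeping behind the factorisation $c^{k+1}=\bigl(c_{n}\dots c_{i+1}\bigr)\,d^{k}\,\bigl(c_{i}\dots c_{1}\bigr)$ and the claim that a rotation of $f^{m}$ is an $m$-th power of a rotation of $f$; alongside these, it is worth recording the trivial observation that a cycle is a power of a shorter cycle precisely when it is a non-trivial power of some cycle, so that the strictness of lengths needed for the contradiction comes for free.
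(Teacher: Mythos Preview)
Your proof is correct and follows essentially the same approach as the paper's. Both arguments observe that $d^{k}$ is a subpath of $c^{k+1}$ for the admissibility condition, and for the minimality condition both reduce to the claim that a rotation of an $m$-th power $f^{m}$ is again an $m$-th power of a rotation of $f$; the paper carries out this claim via an explicit division-with-remainder index computation (writing $i=kt+r$ and exhibiting the rotated factor), whereas you invoke periodicity and leave the bookkeeping as routine.
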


\begin{proof}

Let $c$  be a $Z$-primitive cycle   and let $d$ be a rotation of $c$. 
For any integer $m>0$ the path $d^{m}$ is a subpath of $c^{m+1}$ and hence $Z\nmid c^{m+1}$ implies $Z\nmid d^{m}$. 
We require that $d$ is not a power of a shorter cycle. For a contradiction suppose $d=b^{l}$ for some integer $l>1$ and some cycle $b=b_{k}\dots b_{1}$ of length $k<n$. 
Hence $kl=n$. 
Now write  $c=c_{n}\dots c_{1}$ and  $d=c_{i}\dots c_{1}c_{n}\dots c_{i+1}$ and $i=kt+r$ for some integers $t,r$ with $0\leq r<k$. 
Note that if $r=0$ then $c=b^{l}$ which is gives the required contradiction since $c$ is $Z$-primitive. 
So we assume $r>0$. 
In this notation we have $c_{i}\dots c_{1}=b^{t}b_{k}\dots b_{k-r+1}$ and $c_{n}\dots c_{i+1}=b_{k-r}\dots b_{1}b^{l-t-1}$, 
which means $c$ is the product $a^{l}$ of the rotation $a=b_{k-r}\dots b_{1}b_{k}\dots  b_{k-r+1}$ of $b$. 
\end{proof}

\begin{defn}
    \label{defn-spec-quiver-and-relations}
    We say that $(Q,Z)$ is \emph{special} if the following conditions hold. 
\begin{itemize}
     \item[(SP1)] For any arrow $b$ there exists at most $1$ arrow $a$ with $t(a)=h(b)$ and  $ab\notin Z$.
    \item[(SP2)] For any arrow $b$ there exists at most $1$ arrow $c$ with $h(c)=t(b)$ and  $bc\notin Z$.
\end{itemize}
\end{defn}

\begin{rem}
    \label{remark-special-pair-unique-paths}
    Let $(Q,Z)$ be special. Suppose that  $p$ and $p'$ are  $Z$-admissible paths such that $p'$ is not longer than $p$ and such that $p$ and $p'$ have the same right (respectively, left) arrow. 
    Using the same argument from the proof of of \cite[Lemma 5.4]{benn-tenn-string-alg-over-loc}, one can show that $p'$ is a right (respectively, left) subpath of $p$. 
\end{rem}


We now combine our results to understand $Z$-primitive cycles when $(Q,Z)$ is special.

\begin{lem}
\label{lem-primitive-cycles-arrow-unique}
If $(Q,Z)$ is special then any $Z$-primitive cycle has distinct arrows.
\end{lem}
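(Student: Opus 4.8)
The plan is to argue by contradiction: assume $c = c_n \dots c_1$ is a $Z$-primitive cycle in which some arrow repeats, say $c_i = c_j$ with $1 \le i < j \le n$, and deduce that $c$ is a non-trivial power of a shorter cycle, contradicting primitivity. The case $n = 1$ (so $c$ is a single loop) is vacuous, so I assume $n \ge 2$ throughout, and I read all arrow-indices cyclically modulo $n$, writing $c_{n+1} := c_1$.

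The first step is to record the form of $Z$-primitivity that does all the work: since every power of $c$ is $Z$-admissible, no element of $Z$ is a subpath of $c^2$, and in particular $c_{k+1}c_k \notin Z$ for every $k$ (it is a subpath of $c$ when $k < n$ and the middle subpath $c_1 c_n$ of $c^2$ when $k = n$). Now feed this into condition (SP1): for the arrow $b := c_i = c_j$ there is at most one arrow $a$ with $t(a) = h(b)$ and $ab \notin Z$. Both $c_{i+1}$ and $c_{j+1}$ are such arrows — consecutivity gives $t(c_{i+1}) = h(c_i) = h(b) = h(c_j) = t(c_{j+1})$, and $c_{i+1}c_i, c_{j+1}c_j \notin Z$ by the previous sentence — so uniqueness forces $c_{i+1} = c_{j+1}$. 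Iterating this gives $c_{i+k} = c_{j+k}$ for all $k \ge 0$, i.e. setting $d := j - i$ with $0 < d < n$, the arrow-sequence $(c_1, \dots, c_n)$ of $c$ is $d$-periodic modulo $n$.

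Finally I convert periodicity into a factorisation. Being periodic modulo both $n$ and $d$, the sequence is periodic modulo $e := \gcd(d,n)$, and $e \le d < n$ with $e \mid n$. Put $b := c_e \dots c_1$; the relations $c_{m+e} = c_m$ show that every length-$e$ block of $c$ equals $b$, and $h(c_e) = t(c_{e+1}) = t(c_1)$ confirms $b$ is a genuine cycle, so $c = b^{n/e}$ with $n/e > 1$ — contradicting the hypothesis that $c$ is not a power of a shorter cycle. (Alternatively, one could first apply \Cref{lem-arrow-distinct-primitive-iff-rotation-also} to rotate the repeated arrow to position $n$ and then argue with honest right subpaths of powers of $c$ via \Cref{remark-special-pair-unique-paths}, but the direct cyclic argument above sidesteps that.)

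I expect the only friction to be the modular-index bookkeeping — checking that $c_{k+1}c_k$ really sits inside a power of $c$ at the wrap-around $k = n$, and that the double periodicity collapses to period $\gcd(d,n) < n$ — rather than anything conceptual; the entire engine is the uniqueness clause (SP1) together with $Z$-admissibility of $c^2$.
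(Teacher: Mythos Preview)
Your proof is correct, and it takes a genuinely different route from the paper's. The paper first invokes \Cref{lem-arrow-distinct-primitive-iff-rotation-also} to rotate the repeated arrow into position $1$ (so $c_{i+1}=c_1$ with $i$ minimal), then applies \Cref{remark-special-pair-unique-paths} at the level of \emph{paths} to force $c_{si}\dots c_1=(c_i\dots c_1)^s$ for all $s$ with $si\le n$, and finally squeezes out a contradiction to the minimality of $i$. You instead stay at the level of \emph{single arrows}: the raw condition (SP1), together with $c_{k+1}c_k\notin Z$ for all $k$ cyclically, propagates the equality $c_i=c_j$ one step at a time to give full $d$-periodicity of the arrow sequence modulo $n$, and then a standard $\gcd$ argument exhibits $c$ as a non-trivial power. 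Your argument is more self-contained---it needs neither \Cref{lem-arrow-distinct-primitive-iff-rotation-also} nor \Cref{remark-special-pair-unique-paths}---and the periodicity/$\gcd$ step is cleaner than the paper's minimality endgame; the paper's version, on the other hand, illustrates the utility of \Cref{remark-special-pair-unique-paths}, which is reused repeatedly later. Your parenthetical remark correctly identifies the paper's approach as the alternative.
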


\begin{proof}
Let $c=c_{n}\dots c_{1}$ be a $Z$-primitive cycle. 
Of course, any loop has distinct arrows, so we assume $n>1$. 
For a contradiction suppose $c$ does not have distinct arrows. 
By \Cref{lem-arrow-distinct-primitive-iff-rotation-also} the rotations of $c$ must all be $Z$-primitive, but they must not have distinct arrows. 
Hence after rotation we can assume $c_{i+1}=c_{1}$ for an integer $i$ with $1\leq i < n$, and we choose $i$ to be minimal with this property. 
For any integer $s$ where $si\leq n$ we have that the paths $c_{si}\dots c_{1} $ and $(c_{i}\dots c_{1} )^{s}$ are  $Z$-admissible and that they have the same right arrow and the same length. 
Hence by \Cref{remark-special-pair-unique-paths} they are equal, and so $c_{si}=c_{i}$. 

Choose $t>0$ maximal such that $ti\leq n$. 
Hence $(t+1)i\neq n$, and $(t+1)i>n$ since $c$ is $Z$-primitive.  
Let $p=c_{1}c_{n}\dots c_{ti}$ and $p'=c_{n-ti+1}c_{n-ti} \dots c_{1}c_{i}$. 
Both these paths have right arrow $c_{ti}=c_{i}$ and length $n-ti+2$, so they are equal by \Cref{remark-special-pair-unique-paths}. 
But now $c_{n-ti+1}=c_{1}$, contradicting the minimality of $i>n-ti$.
\end{proof}

\begin{lem}
\label{lem-primitive-cycles-sharing-arrows} 
If $(Q,Z)$ is special then any $Z$-primitive cycles that share an arrow must be rotations of each other. 
In particular, there cannot be distinct $Z$-primitive cycles with the same right (respectively, left) arrow. 
\end{lem}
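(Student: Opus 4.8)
The plan is to reduce, using rotations, to the case where the shared arrow is the \emph{right} arrow of both cycles, and then to show the two cycles are actually equal. So let $c$ and $d$ be $Z$-primitive cycles sharing an arrow $a$. By \Cref{lem-arrow-distinct-primitive-iff-rotation-also} some rotation of $c$ has $a$ as its right arrow, and likewise for $d$, and these rotations are again $Z$-primitive; since a rotation of a rotation is a rotation, it suffices to prove the following: \emph{two $Z$-primitive cycles with the same right arrow are equal}. This will simultaneously yield the ``in particular'' clause for right arrows, and the clause for left arrows will follow by the symmetric argument (replacing (SP1) by (SP2), and right subpaths and right arrows by left ones).

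So suppose $c=c_{n}\dots c_{1}$ and $d=d_{m}\dots d_{1}$ are $Z$-primitive with $c_{1}=d_{1}=a$, and, after possibly swapping them, $m\leq n$. Each of $c$, $d$ is $Z$-admissible, being a power of itself, so \Cref{remark-special-pair-unique-paths} applies and shows $d$ is a right subpath of $c$; that is, $c_{i}=d_{i}$ for $1\leq i\leq m$. Assume towards a contradiction that $m<n$. Then $c_{m+1}c_{m}$ is a subpath of the $Z$-admissible path $c$, so $c_{m+1}c_{m}\notin Z$. On the other hand $d$ is a cycle, so $d_{1}d_{m}$ is a legitimate length-$2$ path and occurs as a subpath of $d^{2}$, which is $Z$-admissible; hence $d_{1}d_{m}\notin Z$. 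Now $c_{m}=d_{m}$, and both $c_{m+1}$ and $d_{1}$ have tail equal to $h(c_{m})$: for $d_{1}$ this is because its tail is the incidence vertex of $d$, which is $h(d_{m})=h(c_{m})$. By (SP1) there is at most one arrow $b$ with $t(b)=h(c_{m})$ and $bc_{m}\notin Z$, so $c_{m+1}=d_{1}=a=c_{1}$. But \Cref{lem-primitive-cycles-arrow-unique} says $c$ has distinct arrows, which forces $m+1=1$, i.e. $m=0$ --- absurd. Hence $m=n$, and then $d$, being a right subpath of $c$ of the same length, equals $c$.

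The step I expect to be the crux is the contradiction above: recognising that the first place where $d$ (continued periodically) fails to agree with $c$ forces a repeated arrow $c_{m+1}=c_{1}$ in $c$, which \Cref{lem-primitive-cycles-arrow-unique} forbids. The two small points that have to be checked for this to go through are that $d_{1}d_{m}$ genuinely appears as a subpath of $d^{2}$ (this uses that $d$ is a cycle) and that $d_{1}$ is genuinely a competitor to $c_{m+1}$ in the hypothesis of (SP1), i.e. that $t(d_{1})=h(c_{m})$. Everything else is bookkeeping with the $a_{n}\dots a_{1}$ convention; the ``in particular'' statement is exactly the boxed claim used in the reduction, and its left-arrow analogue is the mirror argument.
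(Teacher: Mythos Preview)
Your proof is correct and follows essentially the same approach as the paper: rotate so the shared arrow sits at one end of both cycles, use \Cref{remark-special-pair-unique-paths} to make the shorter cycle a subpath of the longer, and then obtain a repeated arrow in the longer cycle contradicting \Cref{lem-primitive-cycles-arrow-unique}. The only cosmetic differences are that the paper aligns on the \emph{left} arrow and invokes \Cref{remark-special-pair-unique-paths} a second time (comparing $c'c'$ with $d'$) where you instead apply (SP1) directly at the single arrow $c_{m}$; these are equivalent manoeuvres.
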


\begin{proof}
We are considering $Z$-primitive cycles $c=c_{n}\dots c_{1}$ and $d=d_{m}\dots d_{1}$, and assuming that that $c_{i}=d_{j}$ for some $i$ and $j$. 
We complete the proof by showing that $n=m$ and that $c$ is a rotation of $d$. 
Thus we relabel rotations of $c$ and $d$ by letting
\[
\begin{array}{cc}
c'=c_{i}\dots c_{1}c_{n}\dots c_{i+1}, 
&
d'=d_{j}\dots d_{1}d_{m}\dots d_{j+1}.
\end{array}
\]
Hence $c'$ and $d'$ are, respectively, rotations of $c$ and $d$ which have the same left arrow. 
Without loss of generality, and for a contradiction, assume $n< m$. By \Cref{remark-special-pair-unique-paths} this means  $d'=c'd''$ where $d''$ is some   $Z$-admissible cycle at $v(c')=v(d')$. 
Since $c'$ is $Z$-primitive the path $c'c'$ is  $Z$-admissible, and has the same left arrow as $d'$ meaning that one of $c'c'$ and $d'$ is a  left subpath of the other by \Cref{remark-special-pair-unique-paths}. In particular, $c'$ and $d''$ have the same left arrow, and so $d'$ does not have distinct arrows, and so $d$ does not have distinct arrows. 
We now have the required contradiction, since $d$ does not have distinct arrows, but $d$ is $Z$-primitive, which is impossible by \Cref{lem-primitive-cycles-arrow-unique}. 
\end{proof}

\begin{cor}
\label{cor-bound-on-number-of-primitive-cycles}
Let $(Q,Z)$ be special and $u$ be a vertex that is the head of $n$ arrows and the tail of $m$ arrows. 
The number of $Z$-primitive cycles at $u$ is at most $\min \{n,m\}$.  
\end{cor}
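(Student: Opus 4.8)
The plan is to read the bound off \Cref{lem-primitive-cycles-sharing-arrows}, whose final sentence states that no two distinct $Z$-primitive cycles share a right arrow, and likewise none share a left arrow. Equivalently, the assignment sending a $Z$-primitive cycle to its right arrow is injective on the set of all $Z$-primitive cycles, and so is the assignment sending it to its left arrow. Thus all that remains is to record where these two maps send a $Z$-primitive cycle whose incidence is $u$.

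First I would note that any cycle $c = c_\ell\dots c_1$ with $v(c) = u$ has right arrow $c_1$ satisfying $t(c_1) = t(c) = u$, and left arrow $c_\ell$ satisfying $h(c_\ell) = h(c) = u$. Restricting the right-arrow map therefore gives an injection from the set of $Z$-primitive cycles at $u$ into the set of arrows with tail $u$, which has $m$ elements; restricting the left-arrow map gives an injection into the set of arrows with head $u$, which has $n$ elements. Combining the two conclusions yields that the number of $Z$-primitive cycles at $u$ is at most $\min\{n,m\}$, as claimed.

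I do not anticipate any real obstacle: the substance is already packaged into \Cref{lem-primitive-cycles-sharing-arrows} (hence ultimately into \Cref{lem-primitive-cycles-arrow-unique} and \Cref{remark-special-pair-unique-paths}), and the corollary is a bookkeeping consequence. The only point needing a moment's care is the meaning of ``at $u$'': it must be read as the cycle having incidence $u$ in the sense of \Cref{defn-all-the-words}, so that both the head and the tail of the cycle equal $u$ and the two injectivity statements apply verbatim. (That a loop at $u$ has its right arrow equal to its left arrow is harmless for the counting.)
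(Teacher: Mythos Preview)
Your proposal is correct and follows essentially the same approach as the paper: both arguments invoke \Cref{lem-primitive-cycles-sharing-arrows} to conclude that distinct $Z$-primitive cycles at $u$ must have distinct left arrows (and distinct right arrows), then bound the count by the available arrows. The paper phrases this via pigeonhole after assuming $n\leq m$ without loss of generality, whereas you phrase it as a pair of injections, but the content is identical.
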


\begin{proof}
Without loss of generality $n\leq m$. 
For a contradiction suppose there are $n+1$ distinct $Z$-primitive cycles at $u$. 
Then there must be two of these cycles with the same left arrow, contradicting \Cref{lem-primitive-cycles-sharing-arrows}. 
\end{proof}

\subsection{Ideals generated by zero-relations}
\label{subsection-path-algebras-over-local-rings}
\begin{setup}
\label{setup-set-Z-of-paths}
\label{setup-R-m-k}
In  \Cref{subsection-path-algebras-over-local-rings} we let $(R,\maxideal,k)$ be a noetherian local ring,  
 $RQ$ be the path algebra and $\langle Z\rangle$ be the ideal in $RQ$ generated by a set $Z$ of paths where $(Q,Z)$ is special. 
 For any vertex $v$ let $\sigma_{v}=\sum c$, the sum of the $Z$-primitive cycles $c\in Q_{Z}^{\,\prime}(v\circlearrowleft)$ at $v$. 
\end{setup}


\begin{notn}
    Let $u$ be a vertex and let $n\geq0$ be an integer. 
    \begin{itemize}
    \item $Q^{n}_{Z}(u\leftarrow)$ denotes the  (set of) $Z$-admissible paths $p$ of length-$n$ with  $h(p)=u$. 
    \item $Q^{n}_{Z}(\leftarrow u)$ denotes the  $Z$-admissible  paths $p$  of length-$n$ with  $t(p)=u$. 
    \item $Q_{Z}(v\leftarrow )$ denotes  $\bigcup_{n\geq 0}Q^{n}_{Z}(v\leftarrow )$, the  $Z$-admissible paths with head $v$.
    \item $Q_{Z}(\leftarrow v)$ denotes  $\bigcup_{n\geq 0}Q^{n}_{Z}(\leftarrow v)$, the  $Z$-admissible paths with tail $v$. 
    \item $Q_{Z}(u\leftarrow v)$ denotes 
 $Q_{Z}(\leftarrow v)\cap Q_{Z}(u\leftarrow ) $, the  $Z$-admissible paths from $v$ to $u$.
    \item $Q_{Z}(u\circlearrowleft)$ denotes $Q_{Z}(u\leftarrow u)$, the  $Z$-admissible cycles $c$  with $v(c)=u$.
    \item $Q^{\,\prime}_{Z}(u\circlearrowleft)$ denotes the subset of $Q_{Z}(u\circlearrowleft)$ consisting of $Z$-primitive cycles. 
\end{itemize}
\end{notn}

\begin{lem}
\label{lem-special-paths-of-arbitrary-length-initial-subpaths-or-primitive-cycles}
Let $u$ be a vertex such that $Q_{Z}^{h}(\leftarrow u)\neq \emptyset$ (respectively, $Q_{Z}^{h}(u\leftarrow )\neq \emptyset$) for infinitely many $h>0$. 
Then there exists $d>0$ such that, for any $\ell>d$, if a path $p$ lies  in $Q_{Z}^{\ell}(\leftarrow u)$ (respectively, $ Q_{Z}^{\ell}(u\leftarrow )$) then $p$ is a right (respectively, left) subpath of a power of some $c\in Q^{\,\prime}_{Z}(u\circlearrowleft)$. 
\end{lem}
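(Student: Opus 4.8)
\textbf{Setup and strategy.} The plan is to treat the two cases symmetrically and argue only the ``tail'' case, where $Q_Z^h(\leftarrow u)\neq\emptyset$ for infinitely many $h>0$; the ``head'' case follows by the obvious left--right dual. Fix such a vertex $u$. The key structural input is \Cref{remark-special-pair-unique-paths}: among $Z$-admissible paths sharing the same right arrow, shorter ones are right subpaths of longer ones. So the strategy is: first use the infinitude hypothesis together with finiteness of $Q_1$ and $Q_0$ to pin down a single infinite ``$Z$-admissible ray'' out of $u$, then recognise that this ray is eventually periodic, and finally identify the period with (a rotation of) a $Z$-primitive cycle at $u$, so that every sufficiently long $Z$-admissible path with tail $u$ is forced by \Cref{remark-special-pair-unique-paths} to be a right subpath of a power of that cycle.

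\textbf{Step 1: produce one infinite $Z$-admissible path out of $u$.} Since there are arbitrarily long $Z$-admissible paths with tail $u$, a König's-lemma argument on the (finitely branching) tree of $Z$-admissible paths with tail $u$ yields an infinite sequence of arrows $p = \dots a_3 a_2 a_1$ with $t(a_1)=u$, all of whose finite right subpaths $a_n\dots a_1$ are $Z$-admissible. I would phrase this concretely: inductively, having chosen $a_n\dots a_1$ with infinitely many $Z$-admissible paths of tail $u$ having it as a right subpath, some arrow $a_{n+1}$ with $t(a_{n+1})=h(a_n)$ extends it while keeping that property, since $Q_1$ is finite.

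\textbf{Step 2: the ray is eventually periodic, with period a primitive cycle.} Because $Q_0$ is finite, some vertex $w$ occurs as $h(a_m)$ for infinitely many $m$. Pick two such indices $m<m'$ and set $c'=a_{m'}\dots a_{m+1}$, a $Z$-admissible cycle at $w$. Every power of $c'$ is a right subpath of a long enough truncation of $p$ (iterate the argument choosing infinitely many equal occurrences), hence $Z$-admissible; so by \Cref{lem-cycles-with-admissible-powers-are-powers-of-primitive-ones}, $c'$ is a power of a $Z$-primitive cycle $c''$ at $w$. Now I would use \Cref{remark-special-pair-unique-paths} to show the tail of $p$ beyond the first occurrence of $w$ is exactly the periodic ray $\dots c'' c'' c''$: the truncations $a_\ell\dots a_{m+1}$ and suitable powers/right subpaths of $c''$ share a right arrow and, comparing lengths, one is a right subpath of the other; pushing $\ell\to\infty$ forces agreement. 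Finally, letting $c$ be the rotation of $c''$ with $v(c)=u$ lying along $p$ at the first return to $u$ (here one uses that $u$ itself recurs, since $w$ recurs and $p$ is built from a path of tail $u$ — more carefully, replace $w$ by $u$ in the recurrence argument using that infinitely many truncations of $p$ are right subpaths of $Z$-admissible paths ending\dots actually one just observes the first vertex $u$ already recurs along $p$ by re-running Step 2 with the marked vertex $u$), we get $c\in Q^{\,\prime}_Z(u\circlearrowleft)$ and a concrete $d$: take $d$ to be the length at which $p$ has become purely periodic in $c$.

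\textbf{Step 3: conclude for all long $Z$-admissible paths of tail $u$.} Let $q\in Q_Z^{\ell}(\leftarrow u)$ with $\ell>d$. Then $q$ and the truncation $p_{\le\ell}:=a_\ell\dots a_1$ of $p$ are both $Z$-admissible, have the same right arrow $a_1$ (the unique right arrow of any $Z$-admissible path of tail $u$ that is long — wait, this needs the uniqueness: by \Cref{remark-special-pair-unique-paths} applied to $q$ and $p_{\le \ell}$, which need \emph{not} a priori share a right arrow). To fix this cleanly I would instead argue by downward induction on the position: \Cref{remark-special-pair-unique-paths} is symmetric enough that comparing $q$ with $p_{\le\ell}$ is not automatic, so the honest move is: among all $Z$-admissible paths of tail $u$ and length $>d$ there is, by (SP1) applied repeatedly starting from $u$, a \emph{unique} one of each length, namely $p_{\le\ell}$; hence $q=p_{\le\ell}$, which by Step 2 is a right subpath of a power of $c$. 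I would prove that uniqueness statement as a short lemma-within-the-proof: if $r,r'$ are $Z$-admissible of tail $u$ and equal length, induct on length using (SP1) — after fixing the common right arrow (the unique $Z$-admissible first arrow out of $u$, which exists and is unique once there is any long $Z$-admissible path of tail $u$), strip it off and recurse.

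\textbf{Anticipated main obstacle.} The delicate point is Step 3 / the interface between \Cref{remark-special-pair-unique-paths} and the (SP1) uniqueness: \Cref{remark-special-pair-unique-paths} compares two paths \emph{already assumed} to share a right (or left) arrow, whereas an arbitrary $q\in Q_Z^\ell(\leftarrow u)$ is only known to have tail $u$. Bridging this requires showing that a $Z$-admissible path with tail $u$ is determined by its tail vertex and length (equivalently, that $u$ admits a unique $Z$-admissible outgoing ``word''), which is exactly where special-ness (SP1) — not (SP2) — does the real work, and where the infinitude hypothesis is used to guarantee the ray exists in the first place. Getting the direction of (SP1) versus (SP2) right, and handling the trivial-path boundary cases ($n=0$) without circularity, is the part that needs care; the König's-lemma and periodicity steps are routine given \Cref{lem-cycles-with-admissible-powers-are-powers-of-primitive-ones} and \Cref{lem-arrow-distinct-primitive-iff-rotation-also}.
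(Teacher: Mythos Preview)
Your Step~3 contains a genuine error. You claim that (SP1) applied repeatedly yields a \emph{unique} $Z$-admissible path of each length with tail $u$, equivalently a unique ``$Z$-admissible first arrow out of $u$.'' This is false: (SP1) says that \emph{given an arrow $b$} there is at most one arrow $a$ with $ab\notin Z$; it says nothing about a vertex. A biserial vertex may have two outgoing arrows, and both can start arbitrarily long $Z$-admissible paths. Concretely, in the paper's \Cref{example-introduction} the vertex $1$ has two $Z$-primitive cycles, $ayx$ and $yxa$, with distinct right arrows $x$ and $a$; hence there are two infinite $Z$-admissible rays out of $1$, and $|Q_{Z}^{\ell}(\leftarrow 1)|=2$ for all $\ell\geq 1$. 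Your K\"onig's-lemma ray in Step~1 picks only one of them, and Step~3 then has no mechanism to handle $q$ with the other right arrow.

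The paper's proof avoids this by splitting on the right arrow from the outset. It lets $S$ be the set of arrows with tail $u$ that are \emph{not} the right arrow of any primitive cycle at $u$, and shows that for each $a\in S$ the length of $Z$-admissible paths with tail $u$ and right arrow $a$ is bounded (otherwise the repeated-arrow argument would produce a primitive cycle with right arrow $a$). Taking $d$ larger than all these bounds, any $p\in Q_{Z}^{\ell}(\leftarrow u)$ with $\ell>d$ must have right arrow \emph{outside} $S$, hence equal to the right arrow of some primitive cycle $c$ at $u$, and then \Cref{remark-special-pair-unique-paths} applies to $p$ and $c^{N}$. Your argument is salvageable along exactly these lines: run Steps~1--2 once for each outgoing arrow at $u$, and take the maximum of the resulting bounds. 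Note also that the paper's periodicity step is lighter than yours: once an arrow repeats, say $a_{i}=a_{j}$ with $i<j$, applying (SP2) downward and (SP1) upward forces $a_{k}=a_{k+(j-i)}$ for \emph{all} $k\geq 1$, so $a_{j-i}\dots a_{1}$ is already a cycle at $u$---no K\"onig's lemma, no rotation, and no separate recurrence argument for $u$ is needed.
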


\begin{proof}

Since $Q$ is finite there is a bound $g$ on the length of the paths with distinct arrows and tail $u$. 
Define $S\subseteq Q^{1}_{Z}(\leftarrow u)$ as the set of arrows $a$ which are not the right arrow of a primitive cycle at $u$. 
Momentarily fix $a\in S$ and an integer $m>g$ and consider  a path $p\in Q_{Z}^{m}(\leftarrow u)$ with right arrow $a$. 
By construction $p$ cannot have distinct arrows, and so $p$ is a right subpath of a power of some  $Z$-admissible cycle $c$  with right arrow $a$. 

If $Z\nmid c^{h}$ for all $h>0$ then,  by \Cref{lem-cycles-with-admissible-powers-are-powers-of-primitive-ones}, $c$ is a power of a $Z$-primitive cycle with  right arrow $a$. 
Since this is impossible, any $a\in S$ is the right arrow of some  cycle $c_{a}$ with $Z\mid c_{a}^{h}$ for $h\gg 0$. 

Now for each $a\in S$ choose an integer $d_{a}>0$ such that $Z\mid c_{a}^{h}$ whenever the length $\ell$ of $c_{a}^{h}$ satisfies  $\ell>d_{a}$. 
Now let $d=g+\sum_{a\in S}d_{a}>0$. 
By construction, if $\ell>d$ and $p\in Q^{\ell}_{Z}(\leftarrow u)$ then $p$ is a right subpath of a power of a cycle $c$ such that $Z\nmid c^{h}$ for all $h>0$. 
By \Cref{lem-cycles-with-admissible-powers-are-powers-of-primitive-ones} this means any such $p$ is a right subpath of a power of a $Z$-primitive cycle.
The respective statement follows by symmetry. 
\end{proof}


\begin{lem}
\label{lem-prim-cycles-swapping-with-arrows}
Let $u $ be a vertex and $a$ be an arrow,  and let $c\in Q_{Z}^{\,\prime}(u\circlearrowleft)$. 
   \begin{enumerate}
    \item If $a$ is neither the first arrow $c_{1}$ of $c$, nor last arrow $c_{n}$ of $c$, then $ac,ca\in \langle Z\rangle$. 
    \item If $a=c_{n}$ then there exists $c'\in Q^{\,\prime}_{Z}(t(a)\circlearrowleft) $ unique such that $ca=ac'$ in $RQ$.
    \item If $a=c_{1}$ then there exists $c'\in Q^{\,\prime}_{Z}(h(a)\circlearrowleft) $ unique such that $ac=c'a$ in $RQ$.
   \end{enumerate} 
\end{lem}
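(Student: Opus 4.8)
The plan is to prove the three parts of \Cref{lem-prim-cycles-swapping-with-arrows} by exploiting the special-pair hypothesis, which forces admissible compositions of arrows to be essentially unique, together with \Cref{lem-primitive-cycles-arrow-unique} and \Cref{lem-primitive-cycles-sharing-arrows}, which between them say that a $Z$-primitive cycle has distinct arrows and is determined up to rotation by any one of its arrows. Throughout, write $c=c_{n}\dots c_{1}$ with $t(c_{1})=u=h(c_{n})$.

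For part (1), suppose $a\notin\{c_{1},c_{n}\}$ and consider the product $ac$. If $h(c_{n})\neq t(a)$ then $ac=0$ in $RQ$ and there is nothing to prove, so assume $t(a)=h(c_{n})=u$; then $ac=a\,c_{n}\dots c_{1}$ is a genuine path. By (SP1) applied to the arrow $c_{n}$, there is at most one arrow $a'$ with $t(a')=h(c_{n})$ and $a'c_{n}\notin Z$. Since $c$ is $Z$-primitive, $c^{2}$ is $Z$-admissible, and its subpath $c_{1}c_{n}$ is therefore not in $Z$; hence $a'=c_{1}$ is the unique such arrow. As $a\neq c_{1}$ we get $ac_{n}\in Z$, so $Z\mid ac$ and thus $ac\in\langle Z\rangle$. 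The statement $ca\in\langle Z\rangle$ follows by the symmetric argument using (SP2) on the arrow $c_{1}$: the unique arrow $c'$ with $h(c')=t(c_{1})$ and $c_{1}c'\notin Z$ is $c_{n}$ (again because $c_{1}c_{n}$ is a subpath of the admissible $c^{2}$), and $a\neq c_{n}$ forces $c_{1}a\in Z$.

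For part (2), assume $a=c_{n}$, so $t(a)=t(c_{n})=h(c_{n-1})$ (or $=u$ if $n=1$). The product $ca=c_{n}\dots c_{1}c_{n}$ is a path of length $n+1$; it is $Z$-admissible because it is a subpath of $c^{2}$. Its left subpath of length $n$, namely $c_{n}\dots c_{1}$, equals $c$, and I want to rewrite $ca$ as $a$ composed on the right with a $Z$-primitive cycle at $t(a)$. Set $c'=c_{1}c_{n}\dots c_{2}$, which is the rotation of $c$ beginning with the arrow $c_{2}$ and ending with $c_{1}$; it is a cycle at $h(c_{1})=t(c_{n})=t(a)$. By \Cref{lem-arrow-distinct-primitive-iff-rotation-also} the rotation $c'$ is again $Z$-primitive, so $c'\in Q^{\,\prime}_{Z}(t(a)\circlearrowleft)$. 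Now $ac'=c_{n}\,c_{1}c_{n}\dots c_{2}$ and $ca=c_{n}\dots c_{2}c_{1}c_{n}$; both are the arrow concatenation of exactly $c_{n},c_{n-1},\dots,c_{2},c_{1},c_{n}$ read in the same order, hence $ca=ac'$ in $RQ$. Uniqueness of $c'$ is then immediate from \Cref{lem-primitive-cycles-sharing-arrows}: if $ac'=ac''$ with $c',c''$ both $Z$-primitive, then cancelling the path prefix $a$ in $RQ$ (path algebras are graded with paths forming a basis, so concatenation is injective on the set of paths) gives $c'=c''$ as paths. Part (3) is entirely symmetric, with the rotation $c'=c_{n-1}\dots c_{1}c_{n}$ ending in the arrow $c_{n}$ and $c'a=c_{n-1}\dots c_{1}c_{n}\,c_{1}=ac$ read as the concatenation $c_{1},c_{n},c_{n-1},\dots,c_{1}$; again \Cref{lem-arrow-distinct-primitive-iff-rotation-also} gives $Z$-primitivity and \Cref{lem-primitive-cycles-sharing-arrows} gives uniqueness.

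The only genuine subtlety — and the step I would be most careful about — is bookkeeping the indices so that the asserted equalities $ca=ac'$ and $ac=c'a$ hold \emph{as paths}, i.e. checking that the rotation $c'$ is chosen so that $c$ and $c'$ overlap correctly after prepending or appending $a$; there is no deep content here, but it is the place an off-by-one error would creep in. Everything else reduces to the two already-established structural facts about $Z$-primitive cycles under the special hypothesis. I should also note at the outset the harmless convention that when $t(a)\neq h(c_{n})$ (resp.\ $h(a)\neq t(c_{1})$) the relevant product vanishes, so that in parts (2) and (3) the stated head/tail compatibility is part of the hypothesis $a=c_{n}$ (resp.\ $a=c_{1}$), since $a$ being literally an arrow of the cycle $c$ forces the composite to be a legitimate path.
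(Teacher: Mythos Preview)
Your approach is the same as the paper's, and part (1) is fine. However, you have committed precisely the bookkeeping error you warned yourself about: the rotations in parts (2) and (3) are swapped.

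For part (2) with $a=c_{n}$, you need $ca=c_{n}c_{n-1}\cdots c_{1}c_{n}=c_{n}\cdot c'$, so the correct rotation is $c'=c_{n-1}\cdots c_{1}c_{n}$, a cycle at $t(c_{n})=h(c_{n-1})=t(a)$. Your choice $c'=c_{1}c_{n}\cdots c_{2}$ is a cycle at $h(c_{1})=t(c_{2})$, which in general is not $t(a)$; your claimed equality $h(c_{1})=t(c_{n})$ holds only when $n\leq 2$. Your verification sentence then compares $ac'=c_{n}c_{1}c_{n}\cdots c_{2}$ with $ca=c_{n}c_{n-1}\cdots c_{1}c_{n}$ and asserts they are the same arrow sequence, which they are not.

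Symmetrically, for part (3) with $a=c_{1}$ you need $ac=c_{1}c_{n}\cdots c_{1}=c'\cdot c_{1}$, so $c'=c_{1}c_{n}\cdots c_{2}$; you instead wrote $c'=c_{n-1}\cdots c_{1}c_{n}$, which belongs to part (2).

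Once the rotations are interchanged, the rest of your argument (primitivity of the rotation via \Cref{lem-arrow-distinct-primitive-iff-rotation-also}, uniqueness by left-cancellation of the path $a$) is correct and matches the paper. Note that your citation of \Cref{lem-primitive-cycles-sharing-arrows} for uniqueness is superfluous: the cancellation argument you actually give suffices on its own.
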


\begin{proof}
We let $c=c_{n}\dots c_{1}$  for arrows $c_{i}$.  
Note that $Z\nmid cc_{n}$ and $Z\nmid c_{1}c$.

(1) Since $a\neq c_{n}$, if $h(a)=u$ then by  (SP2) we have $Z\mid c_{1}a$    and so $Z\mid ca$. 
If $h(a)\neq u$ then $ca=0$ in $RQ$. 
In either case we have $ca\in \langle Z\rangle$, and by symmetry $ac\in \langle Z\rangle$.

(2) If $n=1$ then $ca=a^{2}=ac$. 
So let $n>1$ and $ca=c_{n}c_{n-1}\dots c_{1}c_{n}=ac_{n-1}\dots c_{1}c_{n}$. 
Let $c'=c_{n-1}\dots c_{1}c_{n}$, which is $Z$-primitive by \Cref{lem-arrow-distinct-primitive-iff-rotation-also}. 
Furthermore, if $ac''=ca$ for any  $Z$-admissible cycle $c''$ with $v(c'')=u$, then by \Cref{remark-special-pair-unique-paths} we have that $c'=c''$ since $ac'=ac''$. 
The proof of (3) is similar.
\end{proof}

\begin{lem}
\label{lem-CB-sum-of-prim-cycles}\emph{(c.f. \cite[Lemma 3.1]{Cra2018})}
For any path $p$ we have $p\sigma_{t(p)}-\sigma_{h(p)}p\in \langle Z\rangle$. 
\end{lem}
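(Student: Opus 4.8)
The plan is to reduce to the case where $p$ is a single arrow $a$, and then read off the result directly from \Cref{lem-prim-cycles-swapping-with-arrows}. First I would handle the trivial-path case: if $p=e_v$ then $p\sigma_{t(p)}=\sigma_v e_v=\sigma_v=e_v\sigma_v=\sigma_{h(p)}p$ in $RQ$, since every $Z$-primitive cycle at $v$ has incidence $v$. Next, for the single-arrow case $p=a$, write $\sigma_{t(a)}=\sum_{c\in Q'_Z(t(a)\circlearrowleft)}c$ and $\sigma_{h(a)}=\sum_{d\in Q'_Z(h(a)\circlearrowleft)}d$. I would split the first sum according to \Cref{lem-prim-cycles-swapping-with-arrows}: those cycles $c$ at $t(a)$ whose last arrow $c_n$ equals $a$ contribute $ca=ac'$ for a uniquely determined $c'\in Q'_Z(h(a)\circlearrowleft)$, and all remaining $c$ at $t(a)$ satisfy $ac\in\langle Z\rangle$ by part (1) (noting $a$ cannot be the first arrow $c_1$ of such a $c$, since $h(c_1)=t(a)\ne h(a)=t(c_{?})$… more carefully, $c_1$ has tail $t(a)$ but $a$ has head $h(a)$, so $a=c_1$ would force $h(a)=h(c_1)$, whereas the relevant composite is $ca$, and $a\ne c_n$ suffices to invoke part (1) after checking the head conditions). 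Symmetrically, splitting $\sigma_{h(a)}p=\sigma_{h(a)}a$: those $d$ at $h(a)$ with first arrow $d_1=a$ contribute $da=d'a$… wait, the clean statement is part (3), $ac=c'a$ when $a=c_1$; so the cycles $d$ at $h(a)$ with $d_1=a$ are exactly the $c'$ produced above, and $ac=d a$ for the corresponding $c$. The map $c\mapsto c'$ of \Cref{lem-prim-cycles-swapping-with-arrows}(2) is a bijection from $\{c\in Q'_Z(t(a)\circlearrowleft):c_n=a\}$ to $\{d\in Q'_Z(h(a)\circlearrowleft):d_1=a\}$, with inverse furnished by part (3); hence the two "surviving" parts of $a\sigma_{t(a)}$ and $\sigma_{h(a)}a$ agree term-by-term, and the leftover terms on each side lie in $\langle Z\rangle$ (the leftover $d$ at $h(a)$ with $d_1\ne a$ satisfy $ad\in\langle Z\rangle$ by part (1), using $a\ne d_n$ which holds because $d$ has distinct arrows by \Cref{lem-primitive-cycles-arrow-unique} — no, more simply $a\ne d_1$ already forces what we need via the head/tail incompatibility, but the cleanest invocation is part (1) after the observation $a\ne d_n$). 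So $a\sigma_{t(a)}-\sigma_{h(a)}a\in\langle Z\rangle$.

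For a general non-trivial path $p=a_n\cdots a_1$, I would proceed by induction on $n$ using the telescoping identity
\[
p\sigma_{t(p)}-\sigma_{h(p)}p
=\sum_{i=1}^{n} (a_n\cdots a_{i+1})\bigl(a_i\sigma_{t(a_i)}-\sigma_{h(a_i)}a_i\bigr)(a_{i-1}\cdots a_1),
\]
where each inner factor lies in $\langle Z\rangle$ by the single-arrow case, and $\langle Z\rangle$ is a two-sided ideal, so the whole sum lies in $\langle Z\rangle$. (Here I use $h(a_i)=t(a_{i+1})$ and $t(a_i)=h(a_{i-1})$ for the incidences to match up, and the convention that the outer factors are trivial paths at the ends when $i=n$ or $i=1$.)

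The main obstacle I anticipate is the bookkeeping in the single-arrow step: making sure the bijection between $Z$-primitive cycles at $t(a)$ ending in $a$ and those at $h(a)$ beginning with $a$ is genuinely well-defined and inverse-matched via parts (2) and (3) of \Cref{lem-prim-cycles-swapping-with-arrows}, and confirming that every $Z$-primitive cycle at $t(a)$ that does not end in $a$ (resp. at $h(a)$ not beginning with $a$) really does give an element of $\langle Z\rangle$ when multiplied by $a$ — this requires checking that such a cycle cannot have $a$ as its \emph{other} distinguished arrow, which follows because the head/tail of $a$ are the two incidences in question and a $Z$-primitive cycle has distinct arrows (\Cref{lem-primitive-cycles-arrow-unique}), so at most one of "$a=c_1$" and "$a=c_n$" can hold and the relevant one is excluded by the incidence of the cycle. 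Everything else is a routine expansion. This is, as the statement notes, the string-algebra analogue of \cite[Lemma 3.1]{Cra2018}, and the argument mirrors the usual deformed-preprojective-algebra computation.
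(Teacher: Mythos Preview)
Your overall strategy matches the paper's: handle the trivial case, reduce to a single arrow via a telescoping sum (equivalent to the paper's induction writing $p=qa$), and settle the arrow case using \Cref{lem-prim-cycles-swapping-with-arrows}. But your execution of the arrow case has the indices systematically reversed, and as written it fails. You are computing $a\sigma_{t(a)}=\sum_{c\in Q^{\,\prime}_{Z}(t(a)\circlearrowleft)} ac$, not $\sum_c ca$. For a cycle $c$ at $t(a)$ the last arrow $c_n$ has head $t(a)$, so $c_n=a$ would force $a$ to be a loop; in the generic case your ``surviving'' set $\{c:c_n=a\}$ is empty and the matching collapses. The correct condition is $c_1=a$ (the first arrow has tail $t(a)$): then part (3) of \Cref{lem-prim-cycles-swapping-with-arrows} gives $ac=c'a$ with $c'\in Q^{\,\prime}_{Z}(h(a)\circlearrowleft)$ having \emph{last} arrow $a$, while if $c_1\neq a$ then (SP1) together with $c_1c_n\notin Z$ gives $ac_n\in Z$, so $ac\in\langle Z\rangle$ (note this needs only $a\neq c_1$, not the full hypothesis of part (1)). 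Symmetrically one splits $\sigma_{h(a)}a=\sum_d da$ according to whether $d_n=a$. The rotation bijection is therefore between $\{c:c_1=a\}$ and $\{d:d_n=a\}$, mediated by part (3), not between $\{c:c_n=a\}$ and $\{d:d_1=a\}$ via part (2).

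The paper sidesteps this bookkeeping by first invoking \Cref{lem-primitive-cycles-sharing-arrows}: there is at most one $Z$-primitive cycle $c$ with first arrow $a$ and at most one $c'$ with last arrow $a$, and one exists iff the other does, in which case they are rotations and $ac=c'a$. One then writes $a\sigma_{t(a)}-\sigma_{h(a)}a=a(\sigma_{t(a)}-c)-(\sigma_{h(a)}-c')a$, where every remaining summand lies in $\langle Z\rangle$ for the reason above. This dissolves exactly the obstacle you anticipate.
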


\begin{proof}
It suffices to assume $p$ is not a trivial path.  We can assume $p$ is  $Z$-admissible. 
We prove the assertion by induction on the length of $p$. 
Suppose $p=a$ is an arrow. 
By \Cref{lem-primitive-cycles-sharing-arrows} there is at most one $Z$-primitive cycle $c$ with first arrow $a$, and at most one $Z$-primitive cycle $c'$ with last arrow $a$. 
By \Cref{lem-prim-cycles-swapping-with-arrows} such a cycle $c$ exists if and only if such a cycle $c'$ exists, in which case $ac=c'a$. 

If $d\neq c$ is a $Z$-primitive cycle at $v(d)=t(a)$, then by \Cref{lem-primitive-cycles-sharing-arrows} the first arrow of $d$ cannot be $a$, which by \Cref{lem-prim-cycles-swapping-with-arrows} means $ad\in \langle Z\rangle$. 
Likewise if $d'\neq c'$ is a  $Z$-primitive cycle at $v(d')=h(a)$ then $d'a\in \langle Z\rangle$. 
Assuming without loss of generality that one and hence both of $c,c'$ exist, altogether we have
\[
a\sigma_{t(a)}-\sigma_{h(a)}a=a(\sigma_{t(a)}-c)-(\sigma_{h(a)}-c')a\in  \langle Z\rangle.
\]
Now suppose $p$ has length $n>1$, and write $p=qa$ for some path $q$ of length $n-1$. 
Then
\[
\begin{array}{c}
p\sigma_{t(p)}-
\sigma_{h(p)}p
=
qa\sigma_{t(a)}-
\sigma_{h(p)}p
=
q(a\sigma_{t(a)}-
\sigma_{h(a)}a)+
q\sigma_{h(a)}a-
\sigma_{h(q)}qa
\\
= q(a\sigma_{t(a)}-
\sigma_{h(a)}a)+
(q\sigma_{t(q)}-
\sigma_{h(q)}q)a
\in 
(q\sigma_{t(q)}-
\sigma_{h(q)}q)a+
\langle Z\rangle.
\end{array}
\]
So, if the assertion holds for $q$ it does so for $p$, as required. 
\end{proof}


In \Cref{lem-one-sided-form-of-coeff-prim-relations} we show how to factor $Z$-admissible paths pass over $Z$-primitive cycles. 

\begin{cor}
    \label{lem-one-sided-form-of-coeff-prim-relations}
For any $s\in R$, vertex $v$ and paths $p\in Q_{Z}(\leftarrow v)$ and $q\in Q_{Z}(v\leftarrow)$ we have 
\[
\begin{array}{c}
(se_{h(p)}-\sigma_{h(p)})pq+ \langle Z\rangle
=
p(se_{v}-\sigma_{v})q+ \langle Z\rangle
=pq(se_{t(q)}-\sigma_{t(q)})+ \langle Z\rangle.
\end{array}
\]
\end{cor}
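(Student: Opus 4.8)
The plan is to reduce the statement to \Cref{lem-CB-sum-of-prim-cycles} together with elementary bookkeeping of idempotents. First I would split each of the three expressions into its $s$-part and its $\sigma$-part. Since $e_{h(p)}p=p$, $pe_{v}=p$ and $(pq)e_{t(q)}=pq$ in $RQ$, the three $s$-parts are literally equal to $spq$, so they contribute nothing to the congruence and it suffices to prove
\[
\sigma_{h(p)}\,pq \;\equiv\; p\,\sigma_{v}\,q \;\equiv\; pq\,\sigma_{t(q)} \pmod{\langle Z\rangle}.
\]

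Next I would apply \Cref{lem-CB-sum-of-prim-cycles} to the path $p$: as $t(p)=v$, it gives $p\sigma_{v}-\sigma_{h(p)}p\in\langle Z\rangle$, and right-multiplying by $q$ — using that $\langle Z\rangle$ is a two-sided ideal — yields $p\sigma_{v}q-\sigma_{h(p)}pq\in\langle Z\rangle$, which is the first congruence. Symmetrically, applying \Cref{lem-CB-sum-of-prim-cycles} to $q$ (with $h(q)=v$) gives $q\sigma_{t(q)}-\sigma_{v}q\in\langle Z\rangle$, and left-multiplying by $p$ gives $pq\sigma_{t(q)}-p\sigma_{v}q\in\langle Z\rangle$, the second congruence. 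Adding $spq=se_{h(p)}pq=p(se_{v})q=pq(se_{t(q)})$ back to each term recovers the displayed identities.

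I do not expect any real obstacle here: the content is entirely carried by \Cref{lem-CB-sum-of-prim-cycles}, and the only points worth stating carefully are that $\langle Z\rangle$ being two-sided is what lets us multiply the relations of that lemma by $q$ on the right and by $p$ on the left, and that the hypotheses $p\in Q_{Z}(\leftarrow v)$, $q\in Q_{Z}(v\leftarrow)$ serve only to fix the composability $t(p)=v=h(q)$ and the ambient notation — $Z$-admissibility itself is not used, since if $Z\mid p$ or $Z\mid q$ then all terms lie in $\langle Z\rangle$ and the claim is vacuous. I would write the argument in two or three lines.
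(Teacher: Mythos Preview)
Your proposal is correct and follows essentially the same route as the paper: both arguments invoke \Cref{lem-CB-sum-of-prim-cycles} for $p$ (and, symmetrically, for $q$) and then absorb the resulting element of $\langle Z\rangle$ after multiplying by $q$ on the right (respectively $p$ on the left). Your explicit separation into the $s$-part and the $\sigma$-part is exactly what the paper encodes when it writes $p(se_{v}-\sigma_{t(p)})$ as the sum of $sp-\sigma_{h(p)}p$ and $\sigma_{h(p)}p-p\sigma_{v}$.
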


\begin{proof}
By \Cref{lem-CB-sum-of-prim-cycles} we have  $\sigma_{h(p)}p-p\sigma_{t(p)}\in \langle Z\rangle$. 
That the left-hand equation holds follows by writing $p(se_{v}-\sigma_{t(p)})$ as the sum of $sp-\sigma_{h(p)}p$ and $\sigma_{h(p)}p-p\sigma_{v}$. 
The right-hand equation is similar. 
\end{proof}


\begin{notn}
\label{lem-factoring-through-prim-cycle-bijection}
   Let $u,v,w$ be vertices, $q\in Q_{Z}(u\leftarrow w)$, $m\in Q_{Z}(\leftarrow u)$ and $n\in Q_{Z}(w\leftarrow )$. 
\begin{itemize}
    \item $P(q;v)$ is the set of $(p',p)\in Q_{Z}(u\leftarrow v)\times  Q_{Z}(v\leftarrow w)$ such that $q=p'p~(v\in Q_{0})$. 
    \item $V(q)$ is the set of vertices $v$ traversed by $q$, that is, the set of $v$ such that $P(q;v)\neq\emptyset$.  
    \item $Q^{+}_{Z}(m\mid\leftarrow  w)$ is the set of paths in $Q_{Z}( \leftarrow w)$ of the form $mp$ with $p\in Q_{Z}(u\leftarrow w)$. 
    \item $Q^{+}_{Z}(u\leftarrow\mid  n)$ is the set of paths in $Q_{Z}( \leftarrow w)$ of the form $p'n$ with $p'\in Q_{Z}(u\leftarrow w)$. 
    \item $Q^{-}_{Z}(m\mid\leftarrow  w)$ is the set of paths $p\in Q_{Z}(u \leftarrow w)$ such that $mp\in Q_{Z}(u\leftarrow w)$. 
    \item $Q^{-}_{Z}(u\leftarrow\mid  n)$ is the set of paths $p'\in Q_{Z}(u \leftarrow w)$ such that $p'n\in Q_{Z}(u\leftarrow w)$. 
\end{itemize}
Hence the notation above describes paths that  factor  through other paths. 
Establishing more notation for the sequel, note that there are bijections between these sets, defined by 
\[
\begin{array}{ccc}
Q^{+}_{Z}(m\mid\leftarrow  w)\leftrightarrow Q^{-}_{Z}(m\mid\leftarrow  w) & & Q^{+}_{Z}(u\leftarrow\mid  n)\leftrightarrow Q^{-}_{Z}(u\leftarrow\mid  n)
\\
mp\mapsfrom p,\quad  mp=q\mapsto m^{-1}q\coloneqq p&& p'n\mapsfrom p',\quad  p'n=q'\mapsto q'n^{-1}\coloneqq p'
\end{array}
\]
To see this, firstly note that $Q^{-}_{Z}(u\leftarrow\mid  n)\to Q^{+}_{Z}(u\leftarrow\mid  n)$ given by $p'\mapsto p'n$ is surjective by construction. 
Secondly, by \Cref{remark-special-pair-unique-paths} this function is also injective, since any two  $Z$-admissible paths with the same right arrow and equal length must coincide. 
\end{notn}

\begin{example}
    \label{example-running-first-non-trivial} In \Cref{example-running-zeroth-non-trivial},   $V(x)=\{1\}$,  $V(a_{5}b_{1})=\{1,2\}$, $V(b_{2}a_{2})=\{4,5\}$, and  $V(b_{3}a_{1}z)=\{5,6\}$. 
\end{example}

\begin{lem}
\label{lem-factoring-through-prim-cycles-V-sets}
Let $u$ and $w$ be vertices  and let $q\in Q_{Z}(u\leftarrow w)$. 
\begin{enumerate}
    \item There exists at most one $Z$-primitive cycle $c\in Q_{Z}^{\,\prime}(u\circlearrowleft)$ such that $q\in Q^{\pm }_{Z}(c\mid\leftarrow  w)$, and in case $q\in Q^{+ }_{Z}(c\mid\leftarrow  w)$ we have $V(c^{-1}q)\subseteq V(q)$ and $V(c)=V(q)$. 
    \item There exists at most one $Z$-primitive cycle $d\in Q_{Z}^{\,\prime}(w\circlearrowleft)$ such that  $q\in Q^{\pm }_{Z}(u\leftarrow\mid  d)$, and in case $q\in Q^{+ }_{Z}(u\leftarrow\mid  d)$ we have   $V(qd^{-1})\subseteq V(q)$ and $V(d)=V(q)$. 
\end{enumerate}
\end{lem}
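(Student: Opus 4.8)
The plan is to prove part (1); part (2) then follows by the evident left-right symmetry of all the definitions involved. For the uniqueness clause, suppose $q \in Q^{\pm}_{Z}(c \mid \leftarrow w)$ and $q \in Q^{\pm}_{Z}(c' \mid \leftarrow w)$ for two $Z$-primitive cycles $c, c' \in Q^{\,\prime}_{Z}(u\circlearrowleft)$. In the $Q^{+}$ case this means $q = cp = c'p'$ for $Z$-admissible paths $p, p'$; in the $Q^{-}$ case it means $cq$ and $c'q$ are both $Z$-admissible. In either situation the left-hand sides $c, c'$ (or the left subpaths of $cq$, $c'q$ of length $|c|$ and $|c'|$) are $Z$-admissible paths ending at $u$ with the same left arrow as each other — namely the left arrow of $q$ in the $Q^+$ case, and the left arrow of $cq$ in the $Q^-$ case, once one checks these agree. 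Applying \Cref{remark-special-pair-unique-paths}, one of $c$, $c'$ is a left subpath of the other, and since both are $Z$-primitive cycles at $u$ sharing an arrow, \Cref{lem-primitive-cycles-sharing-arrows} forces them to be rotations of each other; but a rotation of $c$ at the same incidence vertex $u$ that is also a left subpath of $c$ (or of which $c$ is a left subpath) must equal $c$, using that $Z$-primitive cycles have distinct arrows (\Cref{lem-primitive-cycles-arrow-unique}). So $c = c'$.

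For the $V$-set claims, assume $q \in Q^{+}_{Z}(c \mid \leftarrow w)$, so $q = cp$ with $p = c^{-1}q \in Q_{Z}(u \leftarrow w)$ and $c \in Q^{\,\prime}_{Z}(u\circlearrowleft)$. The inclusion $V(c^{-1}q) = V(p) \subseteq V(q)$ is immediate: any vertex traversed by $p$ is traversed by $cp = q$, since a factorisation $p = p_2 p_1$ yields $q = (cp_2)p_1$. For $V(c) = V(q)$: the inclusion $V(c) \subseteq V(q)$ is similarly immediate since $q = cp$ exhibits every vertex traversed by $c$ as traversed by $q$. The reverse inclusion $V(q) \subseteq V(c)$ is where the real content lies, and it is the step I expect to be the main obstacle. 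Here one must show that every vertex $v$ with $q = q_2 q_1$ for $Z$-admissible $q_1, q_2$ already appears among the vertices of the single cycle $c$. The idea is that $q$, being a path factoring through the $Z$-primitive cycle $c$ at its head, is forced by \Cref{lem-special-paths-of-arbitrary-length-initial-subpaths-or-primitive-cycles} and \Cref{remark-special-pair-unique-paths} to be a left subpath of a power $c^N$ of $c$ (this is essentially where $c$'s primitivity and specialness combine); then any vertex traversed by $q$ is traversed by $c^N$, and vertices traversed by $c^N$ are exactly those traversed by $c$ since consecutive copies of $c$ in $c^N$ share the incidence vertex and traverse the same vertex set $V(c)$.

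Making that last reduction precise is the delicate point: one needs to know that $q \in Q^{+}_{Z}(c \mid \leftarrow w)$ really does force $q$ to sit inside a power of $c$ and not merely share $c$ as an initial segment. The argument I have in mind is: $q$ and $c$ are $Z$-admissible paths with the same left arrow (the left arrow of $c$), $q$ is at least as long as $c$, so by \Cref{remark-special-pair-unique-paths} $c$ is a left subpath of $q$, say $q = cp$; if $p$ is trivial we are done, otherwise $cp$ and the rotation trick applied via \Cref{lem-prim-cycles-swapping-with-arrows} and \Cref{lem-CB-sum-of-prim-cycles}-style bookkeeping (or more directly, an induction on the length of $p$ showing $p$ again has the form $c^{k}p'$ by repeatedly peeling off copies of $c$, each step justified because $cp$ being $Z$-admissible with the right left arrow makes the next copy of $c$ forced) shows $q$ is a left subpath of $c^{N}$ for $N = \lceil |q|/|c|\rceil$. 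With that in hand, $V(q) \subseteq V(c^{N}) = V(c)$ completes the proof, and part (2) follows by symmetry.
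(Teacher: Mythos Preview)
Your uniqueness argument in the $Q^{-}$ case has a genuine gap. You assert that $c$ and $c'$ have the same left arrow ``once one checks these agree,'' but the left arrow of $cq$ is simply the left arrow of $c$, and likewise for $c'q$, so this is exactly the claim in question and nothing has been checked. The paper argues differently and directly: since $cq$ and $c'q$ are $Z$-admissible, the right arrow of $c$ and the right arrow of $c'$ must each compose admissibly with the left arrow of $q$, so by (SP1) these right arrows coincide; then the second sentence of \Cref{lem-primitive-cycles-sharing-arrows} (no two distinct $Z$-primitive cycles share a right arrow) gives $c=c'$ immediately. Your $Q^{+}$ argument is correct in observing that $c,c'$ share the left arrow of $q$, but again \Cref{lem-primitive-cycles-sharing-arrows} finishes at once; the detour through \Cref{remark-special-pair-unique-paths}, rotations, and \Cref{lem-primitive-cycles-arrow-unique} is unnecessary.

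Your route to $V(q)\subseteq V(c)$ works but is far more elaborate than needed. The paper dispatches it in one line: choose $n$ so large that $c^{n}$ is longer than $q$; since $q=cp$ and $c^{n}$ are both $Z$-admissible with the same left arrow, \Cref{remark-special-pair-unique-paths} makes $q$ a left subpath of $c^{n}$, hence $V(q)\subseteq V(c^{n})=V(c)$. The appeals to \Cref{lem-special-paths-of-arbitrary-length-initial-subpaths-or-primitive-cycles}, \Cref{lem-prim-cycles-swapping-with-arrows}, \Cref{lem-CB-sum-of-prim-cycles}, and the inductive peeling of copies of $c$ are all superfluous. The easy inclusions $V(c^{-1}q)\subseteq V(q)$ and $V(c)\subseteq V(q)$ you handle correctly.
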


\begin{proof}
(1)  
If $q\in Q^{-}_{Z}(c\mid\leftarrow  w)\cap  Q^{-}_{Z}(c'\mid\leftarrow  w)$ for $c,c'\in Q_{Z}^{\,\prime}(u\circlearrowleft)$ then $Z\nmid cq,c'q$ meaning  $c$ and $c'$ have the same right arrow by (SP1), and so $c=c'$ by \Cref{lem-primitive-cycles-sharing-arrows}. 
Similarly if $q\in Q^{+}_{Z}(c\mid\leftarrow  w)\cap Q^{+}_{Z}(c'\mid\leftarrow  w)$ then $c$, $q$ and $c'$ have the same left arrow, again giving $c=c'$ by \Cref{lem-primitive-cycles-sharing-arrows}.

Now assume $q\in Q^{+}_{Z}(c\mid\leftarrow  w)$, say where $q=cp$ for some $p\in Q_{Z}(u\leftarrow w)$. 
By \Cref{lem-factoring-through-prim-cycle-bijection} we have  $V(c)\subseteq V(q)$ and $V(p)\subseteq V(q)$. 
Furthermore, there is some $n\gg0$ such that $c^{n}$ and $q$ are  $Z$-admissible with the same left arrow, where $c^{n}$ is longer. 
Hence $q$ is a left subpath of $c^{n}$ by \Cref{remark-special-pair-unique-paths}, and so $V(q)\subseteq V(c)$. 

This shows (1) holds. The proof of (2) is similar and omitted.
\end{proof}

\begin{lem}
\label{lem-technical-for-K-coefficients}
Let $u,w$ be vertices and  $\sum_{p}r_{p}p\in RQ$ where  $p$ runs through the paths. 
Then 
\[
\sum_{q\in Q_{Z}(u\leftarrow w)}
(
r_{q}\sigma_{u} 
- 
\overleftharpoon{r}{q}e_{u}
)q
\in \langle Z\rangle \,
\text{where } \overleftharpoon{r}{q}=\begin{cases}
    r_{c^{-1}q} & (\exists c\in Q_{Z}^{\,\prime}(u\circlearrowleft)\colon q\in Q^{+}_{Z}(c\mid\leftarrow w)), 
    \\
    0 & (\text{otherwise}),
\end{cases}
\]
and
\[
\sum_{q\in Q_{Z}(u\leftarrow w)}
q(
r_{q}\sigma_{w} 
- 
\overrightharpoon{r}{q}e_{w}
)
\in \langle Z\rangle \,
\text{where } \overrightharpoon{r}{q}=\begin{cases}
    r_{qd^{-1}} & (\exists d\in Q_{Z}^{\,\prime}(w\circlearrowleft)\colon q\in Q^{+}_{Z}(u\leftarrow \mid d)), 
    \\
    0 & (\text{otherwise}).
\end{cases}
\]
\end{lem}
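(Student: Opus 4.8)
The plan is to expand each product $\sigma_u q$ into $Z$-primitive cycles post-composed with $q$, discard the summands that factor through $Z$ (these lie in $\langle Z\rangle$), and then reindex the surviving double sum by the path $cq$ so that its coefficients can be read off directly from the definition of $\overleftharpoon{r}{q}$. Only the coefficients $r_q$ with $q\in Q_Z(u\leftarrow w)$ enter either displayed expression (note that $c^{-1}q$ again lies in $Q_Z(u\leftarrow w)$), so I may treat $\sum_p r_p p$ as supported there.

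For the first claim, since $e_u q = q$ for $q\in Q_Z(u\leftarrow w)$ it suffices to prove that $\sum_{q\in Q_Z(u\leftarrow w)} r_q\,\sigma_u q \equiv \sum_{q\in Q_Z(u\leftarrow w)} \overleftharpoon{r}{q}\,q \pmod{\langle Z\rangle}$. As $\sigma_u=\sum_{c\in Q_Z^{\,\prime}(u\circlearrowleft)}c$ (\Cref{setup-set-Z-of-paths}), the left-hand side is the finite sum $\sum_{q,c} r_q\,cq$ over $q\in Q_Z(u\leftarrow w)$ and $c\in Q_Z^{\,\prime}(u\circlearrowleft)$. For such a pair $cq$ is a path from $w$ to $u$: if $Z\mid cq$ then $cq\in\langle Z\rangle$, while if $Z\nmid cq$ then $cq\in Q^+_Z(c\mid\leftarrow w)\subseteq Q_Z(u\leftarrow w)$ by definition of that set. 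Hence, modulo $\langle Z\rangle$, the left-hand side equals $\sum_{(q,c)\,:\,Z\nmid cq} r_q\,cq$.

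The key step is that $(q,c)\mapsto cq$, on pairs with $q\in Q_Z(u\leftarrow w)$, $c\in Q_Z^{\,\prime}(u\circlearrowleft)$ and $Z\nmid cq$, is a bijection onto $B=\{q'\in Q_Z(u\leftarrow w)\colon q'\in Q^+_Z(c\mid\leftarrow w)\text{ for some }c\in Q_Z^{\,\prime}(u\circlearrowleft)\}$. Surjectivity is immediate from the definition of $Q^+_Z(c\mid\leftarrow w)$. For injectivity, if $c_1q_1=c_2q_2$ then this common path lies in $Q^+_Z(c_1\mid\leftarrow w)\cap Q^+_Z(c_2\mid\leftarrow w)$, so $c_1=c_2$ by \Cref{lem-factoring-through-prim-cycles-V-sets}(1), and then $q_1=q_2$ by left-cancellation of paths (equivalently, by injectivity of $Q^+_Z(c_1\mid\leftarrow w)\to Q^-_Z(c_1\mid\leftarrow w)$ in \Cref{lem-factoring-through-prim-cycle-bijection}). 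Thus the inverse sends $q'\in B$ to $(c^{-1}q',c)$ for the unique $c\in Q_Z^{\,\prime}(u\circlearrowleft)$ with $q'\in Q^+_Z(c\mid\leftarrow w)$, so reindexing gives $\sum_{(q,c)\,:\,Z\nmid cq} r_q\,cq=\sum_{q'\in B} r_{c^{-1}q'}\,q'=\sum_{q'\in B}\overleftharpoon{r}{q'}\,q'=\sum_{q'\in Q_Z(u\leftarrow w)}\overleftharpoon{r}{q'}\,q'$, the last equality because $\overleftharpoon{r}{q'}=0$ off $B$. This proves the first claim; the second follows by the symmetric argument, replacing left multiplication by $\sigma_u$ with right multiplication by $\sigma_w$ and using \Cref{lem-factoring-through-prim-cycles-V-sets}(2) with the sets $Q^{\pm}_Z(u\leftarrow\mid d)$ in place of $Q^{\pm}_Z(c\mid\leftarrow w)$.

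The main obstacle is the bookkeeping of this reindexing: confirming it is a genuine bijection and that the surviving coefficient of each path is exactly $\overleftharpoon{r}{q}$. The one place this could plausibly break is the degenerate case $u=w$ with $q=e_u\in Q_Z(u\leftarrow u)$, where $\sigma_u e_u=\sigma_u$ contributes every primitive cycle $c$ at $u$; but $c=c\,e_u\in Q^+_Z(c\mid\leftarrow u)$ forces $\overleftharpoon{r}{c}=r_{e_u}$, so these terms are matched, and the uniqueness in \Cref{lem-factoring-through-prim-cycles-V-sets}(1) is exactly what keeps the count consistent.
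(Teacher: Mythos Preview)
Your proof is correct and follows essentially the same approach as the paper's: expand $\sigma_u q$ over the primitive cycles, discard the $Z$-inadmissible products into $\langle Z\rangle$, and reindex the surviving double sum via the bijection $(q,c)\mapsto cq$, invoking \Cref{lem-factoring-through-prim-cycles-V-sets}(1) for uniqueness of $c$ and the bijection of \Cref{lem-factoring-through-prim-cycle-bijection} for the cancellation step. Your explicit handling of the degenerate case $q=e_u$ when $u=w$ is a nice touch that the paper leaves implicit.
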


\begin{proof}
We only prove the first containment. The second is similar. 
Let $p\in Q_{Z}(u\leftarrow w)$. 
By \Cref{lem-factoring-through-prim-cycle-bijection}, if $c \in Q_{Z}^{\,\prime}(u\circlearrowleft)$ and $p'\in  Q_{Z}(u\leftarrow w)\setminus Q^{-}_{Z}(c\mid\leftarrow  w)$ then 
$Z\mid cp'$ and so $\sigma_{u}p-\sum_{c}cp\in \langle Z\rangle$ where $c$ runs through the $Z$-primitive cycles at $u$ with $p\in Q^{-}_{Z}(c\mid\leftarrow  w)$. 

For each $p$ let $c_{p}=\{c\in Q_{Z}^{\,\prime}(u\circlearrowleft)\colon p\in Q^{-}_{Z}(c\mid\leftarrow  w)\}$. By \Cref{lem-factoring-through-prim-cycles-V-sets} either $c_{p}$ is empty or is a singleton. 
The union $\bigcup_{p\in Q_{Z}(u\leftarrow w)}c_{p}$ is in bijection with $\bigcup_{c\in Q_{Z}^{\,\prime}(u\circlearrowleft)}Q^{-}_{Z}(c\mid\leftarrow  w)$. 
As in  \Cref{lem-factoring-through-prim-cycle-bijection} there is a bijection $Q^{-}_{Z}(c\mid\leftarrow  w)\to Q^{+}_{Z}(c\mid\leftarrow  w)$ sending $p\to cp$ whose inverse sends $q\in Q^{+}_{Z}(c\mid\leftarrow  w)$ to some $c^{-1}q\in Q^{-}_{Z}(c\mid\leftarrow  w)$. 
For each $q\in Q_{Z}(u\leftarrow w)$ we let $s_{q}=\sum_{c}r_{c^{-1}q}$ where $c$ runs through the elements of $c\in Q_{Z}^{\,\prime}(u\circlearrowleft) $ such that $q\in Q^{+}_{Z}(c\mid\leftarrow  w)$. 
Again by \Cref{lem-factoring-through-prim-cycles-V-sets} there is at most one such $c$. 
Hence $s_{q}=\overleftharpoon{r}{q}$.

Thus, combining this notation, we have
\[
\begin{array}{c}
\sum_{p\in Q_{Z}(u\leftarrow w)}r_{p}\sum_{c\in Q_{Z}^{\,\prime}(u\circlearrowleft)\colon p\in Q^{-}_{Z}(c\mid\leftarrow  w)}cp=
\sum_{c\in Q_{Z}^{\,\prime}(u\circlearrowleft)}\sum_{p\in Q^{-}_{Z}(c\mid\leftarrow  w)}r_{p}cp
\\
=
\sum_{c\in Q_{Z}^{\,\prime}(u\circlearrowleft)}\sum_{cp\in Q^{+}_{Z}(c\mid\leftarrow  w)}r_{p}cp
=
\sum_{c\in Q_{Z}^{\,\prime}(u\circlearrowleft)}\sum_{q\in Q^{+}_{Z}(c\mid\leftarrow  w)}r_{c^{-1}q}q
\\
=
\sum_{q\in Q_{Z}(u\leftarrow w)}\sum_{c\in Q_{Z}^{\,\prime}(u\circlearrowleft)\colon q\in Q^{+}_{Z}(c\mid\leftarrow  w)}r_{c^{-1}q}q
=
\sum_{q\in Q_{Z}(u\leftarrow w)}\overleftharpoon{r}{q}q. 
\end{array}
\]
Combining everything we have so far gives the first containment. 
The second containment is dual.
\end{proof}

\section{Primitive-nerve partitions}
\label{subsec-prim-reg-rad-collections}


\begin{defn}
        \label{defn-canonical-partition-vertices}
Let $(Q,Z)$ be special. 
Let $V=\bigcup _{c}V(c)$, the set of vertices $v$ which are traversed by some $Z$-primitive cycle $c$. 
We define a partition $V=V[1]\sqcup \dots\sqcup V[n]$ by taking connected components of the nerve associated to the set $V(c)$. 
We explain what this means. 
Consider the subset of $V\times V$ given by the union $\bigcup_{c} V(c)\times V(c)$. 
Define the equivalence relation $\sim$ on $V$ generated by (taking the transitive closure of) this  relation. 
Thus we have a partition, which we denote $V=V[1]\sqcup \dots\sqcup V[n]$, into $\sim$ equivalence classes. 

We refer to $V=V[1]\sqcup\dots \sqcup V[n]$ as the \emph{primitive}-\emph{nerve partition} of $(Q,Z)$.  
\end{defn}

\begin{setup}
\label{setup-scalars}
    In  \S\ref{subsec-prim-reg-rad-collections} we recall and add to \Cref{setup-set-Z-of-paths}.  
    Hence we begin by assuming $(R,\maxideal,k)$ is local and noetherian, that $Z$ is a set of paths, and that $(Q,Z)$ is special. 
    Additionally, we fix a collection $(s_{1},\dots,s_{n})$ of scalars $s_{i}\in R$, one for each part of the nerve partition $V=V[1]\sqcup\dots \sqcup V[n]$. 
\end{setup}

\begin{example}
\label{example-running-second-non-trivial} For the pair $(Q,Z)$ from \Cref{example-running-first-non-trivial} we have  
$V=\{1,2,4,5,6\}=Q_{0}\setminus \{3\}$, and the primitive-nerve partition is 
$V=V[1]\sqcup V[2]$ where $
V[1]=\{1,2\}$ and $V[2]=\{4,5,6\}$. 
\end{example}

\begin{rem}
\label{rem-equiv-rel-prim-cycles}
Let $u$ and $v$ be vertices.  
If $u\sim v$ and $u\neq v$ then there exists $n>1$ and $w_{1},\dots,w_{n}\in Q_{0}$  with $u=w_{1}$, $v=w_{n}$ and, for each $i<n$, $w_{i},w_{i+1}\in V(c_{(i)})$ for some primitive cycle $c_{(i)}$. 
As a consequence the following statements hold. 
To see (3) and (4), apply \Cref{lem-factoring-through-prim-cycles-V-sets}.
\begin{enumerate}
    \item If $u,v\in V$ and $u\sim v$, $d\in Q_{Z}^{\,\prime}(u\circlearrowleft)$ and $d'\in Q_{Z}^{\,\prime}(v\circlearrowleft)$ then there exist $Z$-primitive cycles $c_{(1)},\dots c_{(t)}$ where  $d=c_{(1)}$, $d'=c_{(n)}$ and $w_{i+1}\in V(c_{(i)})\cap V(c_{(i+1)})$ for $i<t$. 
    \item If $u,v\in V[l]$ then $V(d)\cup V(d')\subseteq V[l]$ for any  $d\in Q_{Z}^{\,\prime}(u\circlearrowleft)$ and $d'\in Q_{Z}^{\,\prime}(v\circlearrowleft)$.
    \item If $q\in Q^{+}_{Z}(c\mid\leftarrow  v)$ for some $c\in Q_{Z}^{\,\prime}(u\circlearrowleft)$ then $V(q)\subseteq V[i]$ for some $i=1,\dots,n$. 
    \item If $q'\in Q^{+}_{Z}(u\leftarrow\mid  d)$ for some $d\in Q_{Z}^{\,\prime}(v\circlearrowleft)$ then $V(q)\subseteq V[j]$ for some $j=1,\dots,n$. 
\end{enumerate}
\end{rem}

    


\begin{notn}
\label{notation-ideal-I}
     Let $I=I_{Z}+I_{V}+I_{\neg V}$ where the ideals $I_{Z},I_{V},I_{\neg V}$ of $RQ$ are defined by
\[
\begin{array}{ccc}
I_{V}=\sum_{i=1}^{n}\langle s_{i}e_{v}-\sigma_{v},\mid\, v\in V_{i}\rangle,
&
I_{\neg V}=\sum_{i=1}^{n}\langle s_{i}e_{v},\mid\, v\in Q_{0}\setminus V_{i}\rangle,
&
I_{Z}=\langle Z\rangle. 
\end{array}
\]
\end{notn}


\begin{lem}
\label{lem-canonical-form-in-J}
Let $u$ and $w$ be vertices and  $x\in  e_{u}I_{V}e_{w}$. There exist $r_{j,q,v}\in R$ for each $j=1,\dots,n$, $q\in Q_{Z}(u\leftarrow w)$ and $v\in V(q)$ such that $x-\sum_{q}
t_{q}q\in I_{Z} $  where
\[
t_{q}=
\begin{cases}
\sum_{v\in V(q)}s_{i}r_{i,q,v}
-
\sum_{v\in V(c^{-1}q)}r_{i,c^{-1}q,v}  & \left(\exists\, c\in Q_{Z}^{\,\prime}(u\circlearrowleft):q\in Q^{+}_{Z}(c\mid\leftarrow  w),u\in V[i]
\right)
\\
\sum_{j=1}^{d}
\sum_{v\in V[j]\cap V(q)}
s_{j}r_{j,q,v} & \left(\text{otherwise}\right).
\end{cases}
\]
\end{lem}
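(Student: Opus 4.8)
The plan is to expand $x$ into a sum of expressions of the shape $p(s_ie_v-\sigma_v)q$, to slide the "defect" $s_ie_v-\sigma_v$ to the left using \Cref{lem-one-sided-form-of-coeff-prim-relations}, and then to read off the coefficients, using \Cref{lem-technical-for-K-coefficients} to handle the $\sigma_u$-term that appears.

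First I would write $x=\sum_k r_k\,p_k(s_{i_k}e_{v_k}-\sigma_{v_k})q_k$ with $r_k\in R$, $v_k\in V[i_k]$, $p_k$ a path from $v_k$ to $u$, and $q_k$ a path from $w$ to $v_k$. This is possible because $I_V$ is generated as a two-sided ideal by the elements $s_ie_v-\sigma_v$, each of which is fixed on both sides by $e_v$, and $e_u(RQ)e_v$ is $R$-spanned by the paths from $v$ to $u$. If $p_k$ or $q_k$ is not $Z$-admissible then it lies in $\langle Z\rangle=I_Z$, and hence so does the whole summand, so I may discard it; thus I assume $p_k\in Q_Z(\leftarrow v_k)$ and $q_k\in Q_Z(v_k\leftarrow)$. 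Applying \Cref{lem-one-sided-form-of-coeff-prim-relations} with $s=s_{i_k}$, and using $h(p_k)=u$, gives $p_k(s_{i_k}e_{v_k}-\sigma_{v_k})q_k\equiv(s_{i_k}e_u-\sigma_u)(p_kq_k)\pmod{I_Z}$. If the concatenation $p_kq_k$ is not $Z$-admissible it again lies in $\langle Z\rangle$, so that summand may also be dropped; for the survivors I set $P_k:=p_kq_k\in Q_Z(u\leftarrow w)$ and note that $v_k\in V(P_k)\cap V[i_k]$. Hence $x\equiv\sum_k r_k s_{i_k}P_k-\sum_k r_k\,\sigma_u P_k\pmod{I_Z}$.

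Next I would treat the two sums. For the first, collecting summands with a common path $q=P_k$ and a common index $j$, and observing that $v_k\in V[j]$ forces $i_k=j$ since the parts are disjoint, I get $\sum_k r_k s_{i_k}P_k=\sum_{q\in Q_Z(u\leftarrow w)}\sum_{j=1}^{n}\sum_{v\in V[j]\cap V(q)}s_j r_{j,q,v}\,q$, where I put $r_{j,q,v}:=\sum_{k:\,P_k=q,\,v_k=v}r_k$ when $v\in V[j]$ and $r_{j,q,v}:=0$ for the remaining triples. For the second, I would feed $\sum_p\big(\sum_{k:\,P_k=p}r_k\big)p$ into \Cref{lem-technical-for-K-coefficients}; since the $P_k$ are $Z$-admissible paths from $w$ to $u$, this yields $\sum_k r_k\,\sigma_u P_k\equiv\sum_{q\in Q_Z(u\leftarrow w)}\overleftharpoon{r}{q}\,q\pmod{I_Z}$, with $\overleftharpoon{r}{q}=\sum_{k:\,P_k=c^{-1}q}r_k$ if $q\in Q^{+}_{Z}(c\mid\leftarrow w)$ for the cycle $c\in Q_{Z}^{\,\prime}(u\circlearrowleft)$ (unique by \Cref{lem-primitive-cycles-sharing-arrows} and \Cref{lem-factoring-through-prim-cycles-V-sets}), and $\overleftharpoon{r}{q}=0$ otherwise. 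Combining the two gives $x\equiv\sum_{q}\big(\sum_{j=1}^{n}\sum_{v\in V[j]\cap V(q)}s_j r_{j,q,v}-\overleftharpoon{r}{q}\big)q\pmod{I_Z}$.

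Finally I would match the bracketed coefficient with $t_q$. In the "otherwise" case $\overleftharpoon{r}{q}=0$, so the coefficient is exactly the stated one. In the remaining case, where $q\in Q^{+}_{Z}(c\mid\leftarrow w)$ for $c\in Q_{Z}^{\,\prime}(u\circlearrowleft)$ and $u\in V[i]$, \Cref{rem-equiv-rel-prim-cycles}(3) (obtained via \Cref{lem-factoring-through-prim-cycles-V-sets}) gives $V(q)\subseteq V[i]$, hence $V(c^{-1}q)\subseteq V(q)\subseteq V[i]$. The first inclusion collapses $\sum_{j}\sum_{v\in V[j]\cap V(q)}s_j r_{j,q,v}$ to $\sum_{v\in V(q)}s_i r_{i,q,v}$, and the second forces every $k$ with $P_k=c^{-1}q$ to satisfy $v_k\in V(c^{-1}q)\subseteq V[i]$, hence $i_k=i$, so that $\overleftharpoon{r}{q}=\sum_{v\in V(c^{-1}q)}r_{i,c^{-1}q,v}$; thus the bracket is precisely $t_q$. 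The main obstacle I anticipate is exactly this last bookkeeping — verifying that the cycle and part index produced by \Cref{lem-technical-for-K-coefficients} are forced to be $c$ and $i$ — which rests on carefully propagating the nerve-partition containments $V(c^{-1}q)\subseteq V(q)\subseteq V[i]$ through the definitions of the coefficients $r_{j,q,v}$.
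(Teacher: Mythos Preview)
Your proof is correct and follows essentially the same route as the paper: expand $x$ in terms of $p(s_ie_v-\sigma_v)q$, slide the defect to the left via \Cref{lem-one-sided-form-of-coeff-prim-relations}, apply \Cref{lem-technical-for-K-coefficients} to the resulting $\sigma_u$-terms, and then use \Cref{lem-factoring-through-prim-cycles-V-sets} together with \Cref{rem-equiv-rel-prim-cycles} to collapse the sums in the $q\in Q^{+}_{Z}(c\mid\leftarrow w)$ case. The only cosmetic difference is that the paper indexes the initial expansion by quadruples $(j,v,q'',q')$ and defines $r_{j,q,v}$ via $P(q;v)$, whereas you use a flat index $k$ and set $r_{j,q,v}=\sum_{k:P_k=q,\,v_k=v}r_k$; these are equivalent bookkeeping devices.
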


\begin{proof}
If $v$ is a vertex and $q'',q'$ are paths with $v\neq t(q'')$ or $v\neq h(q')$ then $q''(s_{v}e_{v}-\sigma_{v})q'=0$. 
So, elements of $e_{u}I_{V}e_{w}$ have the form
\[
\begin{array}{c}
\sum_{j=1}^{d}
\sum_{v\in V[j]}
\sum_{q''\in Q_{Z}(u\leftarrow v)}
\sum_{q'\in Q_{Z}(v \leftarrow w)}
r''_{j,v,q''}
q''
(s_{l}e_{v}-\sigma_{v})
r'_{j,v,q'}
q'
\end{array}
\]
for some $r'_{j,v,q'},r''_{j,v,q''}\in R$. 
By \Cref{lem-one-sided-form-of-coeff-prim-relations}, since $x\in e_{u}(I_{Z}+I_{V})e_{w}$ 
we therefore have
\[
\begin{array}{c}
x-
\sum_{j=1}^{d}
(s_{j}e_{u}-\sigma_{u})
\sum_{v\in V[j]}
\sum_{q\in Q_{Z}(u\leftarrow w)}
\sum_{(q'',q')\in P(q;v)}
r''_{j,v,q''}
r'_{j,v,q'}
q
\in I_{Z}.
\end{array}
\]
Let $r_{j,q,v}=\sum_{(q'',q')}r_{j,v,q''}''r_{j,v,q'}'$ for each $j$, $q$ and $v$. 
Hence the containment above becomes
\[
\begin{array}{cc}
x-
\sum_{j=1}^{d}
\sum_{q\in Q_{Z}(u\leftarrow w)}
\sum_{v\in V[j]\cap V(q)}
(s_{j}e_{u}-\sigma_{u})
r_{j,v,q}
q\in I_{Z}
\end{array}
\]
since $P(q;v)=\emptyset$ whenever $v\notin V(q)$. 
Fix $p\in Q_{Z}(u\leftarrow w)$ where $p\in Q^{+}_{Z}(c\mid\leftarrow  w)$ for some $c\in Q_{Z}^{\,\prime}(u\circlearrowleft)$. 
Recall that such a $c$ is unique by \Cref{lem-primitive-cycles-sharing-arrows}. 
Furthermore $V(c^{-1}p)\subseteq V(p)= V(c)\subseteq  V[i]$ for some $i=1,\dots,n$ by \Cref{lem-factoring-through-prim-cycles-V-sets} and \Cref{rem-equiv-rel-prim-cycles}. 
We let $y_{i,p}=\sum_{v\in V(c^{-1}p)}r_{i,c^{-1}q,v}$ in this situation. 
Otherwise, where $j\neq i$ or where $q\notin Q^{+}_{Z}(c\mid\leftarrow  w)$ for all $c\in Q_{Z}^{\,\prime}(u\circlearrowleft)$, we let $y_{j,q}=0$. 
For any $j=1,\dots, n$ and any $q\in Q_{Z}(u\leftarrow w)$ let  $y'_{j,q}=\sum_{v\in V[j]\cap V(q)}s_{j}r_{j,q,v}$. 
In this notation, by the containment above and \Cref{lem-technical-for-K-coefficients}, 
\[
\begin{array}{c}
x-\sum_{q\in Q_{Z}(u\leftarrow w)}
\sum_{j=1}^{d}(y_{j,q}+y_{j,q}')q\in I_{Z}.
\end{array}
\]
The claim follows, since $\sum_{j=1}^{n}y'_{j,q}=\sum_{v\in  V(q)}
s_{i}
r_{i,q,v}$ for each $q$ with $V(q)\subseteq V[i]$.  
\end{proof}

\begin{lem}
\label{lem-canonical-form-in-K}
Let $u$ and $w$ be vertices and $y\in  e_{u}I_{\neg V}e_{w}$.  
There exist $r_{l,q,v}\in R$ for each $j=1,\dots,d$, $q\in Q_{Z}(u\leftarrow w)$ and $v\in V(q)$ such that $y-\sum_{q}
t_{q}q\in I_{Z} $  where
\[
\begin{array}{c}
t_{q}=
\begin{cases}
\sum_{j=1,\, j\neq i}^{d}
\sum_{v\in V(q)}
s_{j}r_{j,q,v} & \left(\exists\, c\in Q_{Z}^{\,\prime}(u\circlearrowleft) :q\in Q^{+}_{Z}(c\mid\leftarrow  w),u\in V[i]
\right)
\\
\sum_{j=1}^{d}
\left(
\sum_{v\in V(q)\setminus V[j]}
s_{j}r_{j,q,v}
\right)
& \left(\text{otherwise}\right).
\end{cases}
\end{array}
\]
\end{lem}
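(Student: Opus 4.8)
The plan is to follow the structure of the proof of \Cref{lem-canonical-form-in-J}, noting that the argument here is genuinely easier because the generators $s_{i}e_{v}$ of $I_{\neg V}$ carry no $\sigma$-term, so there is no need for an analogue of the step through \Cref{lem-technical-for-K-coefficients} (nor, indeed, of \Cref{lem-one-sided-form-of-coeff-prim-relations}). First I would decompose $y=\sum_{i=1}^{n}\sum_{v\in Q_{0}\setminus V[i]}e_{u}\gamma_{i,v}e_{w}$ with each $\gamma_{i,v}$ lying in the two-sided ideal generated by $s_{i}e_{v}$, and expand $\gamma_{i,v}$ into paths. Since $p'e_{v}p$ equals $p'p$ when $t(p')=v=h(p)$ and is $0$ otherwise, each $e_{u}\gamma_{i,v}e_{w}$ is $s_{i}$ times an $R$-combination of products $p'p$ where $p'$ runs over paths from $v$ to $u$ and $p$ over paths from $w$ to $v$; no passing of paths over primitive cycles occurs at this stage.

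Next I would reduce modulo $I_{Z}=\langle Z\rangle$. If $p'p$ factors through $Z$ then $s_{i}p'p\in I_{Z}$ and the term is discarded; otherwise $p'$ and $p$ are $Z$-admissible subpaths of $q\coloneqq p'p\in Q_{Z}(u\leftarrow w)$, so $(p',p)\in P(q;v)$, which forces $v\in V(q)$. Collecting the coefficient of each $q$ into a scalar $r_{i,q,v}\in R$ (indexed by $i$, by $q\in Q_{Z}(u\leftarrow w)$, and by $v\in V(q)$) and swapping the order of summation, using that the inner sum over $v\in Q_{0}\setminus V[i]$ has vanishing terms unless $v\in V(q)$, i.e. $v\in V(q)\setminus V[i]$, I would obtain $y-\sum_{q}t_{q}q\in I_{Z}$ with $t_{q}=\sum_{i=1}^{n}\sum_{v\in V(q)\setminus V[i]}s_{i}r_{i,q,v}$. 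This is precisely the second (``otherwise'') clause of the statement.

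Finally I would verify that this same formula collapses to the first clause for a path $q$ admitting a $Z$-primitive cycle $c\in Q_{Z}^{\,\prime}(u\circlearrowleft)$ with $q\in Q^{+}_{Z}(c\mid\leftarrow w)$ and $u\in V[i]$: by \Cref{lem-factoring-through-prim-cycles-V-sets}(1) one has $V(q)=V(c)$, and since $u\in V(c)\cap V[i]$ while $V(c)\times V(c)$ lies in the relation defining the primitive-nerve partition (cf. \Cref{rem-equiv-rel-prim-cycles}(3)), all of $V(c)=V(q)$ lies in the single part $V[i]$; hence $V(q)\setminus V[j]=V(q)$ for $j\neq i$ and $V(q)\setminus V[i]=\emptyset$, so $t_{q}=\sum_{j=1,\,j\neq i}^{n}\sum_{v\in V(q)}s_{j}r_{j,q,v}$ as required, with the same family $r_{j,q,v}$ serving both clauses since the terms with index $i$ never occur. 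The proof is essentially bookkeeping; the only point needing care is this last step — correctly invoking \Cref{lem-factoring-through-prim-cycles-V-sets} together with the definition of the nerve partition to confine $V(q)$ to one part — plus the clerical check that a single consistent choice of the $r_{j,q,v}$ realises both clauses.
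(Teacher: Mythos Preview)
Your proposal is correct and follows essentially the same route as the paper: expand $y$ in terms of the generators $s_{j}e_{v}$ (with $v\notin V[j]$), discard $Z$-inadmissible products to land in $I_{Z}$, collect coefficients into $r_{j,q,v}$ to obtain the ``otherwise'' formula, and then invoke \Cref{lem-factoring-through-prim-cycles-V-sets} together with \Cref{rem-equiv-rel-prim-cycles} to see that $V(q)\subseteq V[i]$ in the first case, collapsing the sum. The paper's proof is slightly more terse but the argument and the cited inputs are identical.
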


\begin{proof}
If $y\in e_{u}I_{\neg V}e_{w}$ then there exist elements $r'_{j,v,q'},r''_{j,v,q''}\in R$ such that 
\[
\begin{array}{c}
y-
\sum_{l=1}^{d}
\sum_{v\notin V[l]}
\sum_{q''\in Q_{Z}(u\leftarrow v)}
\sum_{q'\in Q_{Z}(v \leftarrow w)}
r''_{l,v,q''}
q''
s_{l}e_{v}
r'_{l,v,q'}
q'\in I_{Z}.
\end{array}
\]
Let $r_{l,q,v}=\sum_{(q'',q')}r_{j,v,q''}''r_{j,v,q'}'$ for each $j=1,\dots,n$, each $v\notin V[j]$ and each $q\in Q_{Z}(u\leftarrow w)$ where $(q'',q')$ runs through $P(q;v)$. 
Since $v\notin V(q)$ implies $P(q;v)=\emptyset $ this means
\[
\begin{array}{c}
y-
\sum_{j=1}^{d}
\sum_{q\in Q_{Z}(u\leftarrow w)}
\sum_{v\in V(q)\setminus V[j]}
r_{j,v,q}
s_{j}
q\in I_{Z}.
\end{array}
\]
As in the proof of \Cref{lem-canonical-form-in-J}, 
if  $p\in Q^{+}_{Z}(c\mid\leftarrow  w)$ for some $c\in Q_{Z}^{\,\prime}(u\circlearrowleft)$ then $V(c^{-1}p)\subseteq V(p)= V(c)\subseteq  V[i]$ for some $i=1,\dots,n$ by \Cref{lem-factoring-through-prim-cycles-V-sets} and \Cref{rem-equiv-rel-prim-cycles}, and so
\[
\begin{array}{c}
\sum_{j=1}^{d}
\sum_{v\in V(p)\setminus V[j]}
r_{j,v,p}
s_{j}
p
=
\sum_{j=1,\,j\neq i}^{d}
\sum_{v\in V(p)}
r_{j,v,p}
s_{j}
p,
\end{array}
\]
and so the claim follows by combining the equalities and containments above. 
\end{proof}


\begin{lem}
\label{lem-canonical-form-in-I}
Let $u$ and $w$ be vertices and 
 $z\in  e_{u}Ie_{w}$. There exist $r_{j,q}\in R$ for each $j=1,\dots,n$ and $q\in Q_{Z}(u\leftarrow w)$ such that $z-\sum_{q}
t_{q}q\in I_{Z} $  where
\[
\begin{array}{c}
t_{q}=
\begin{cases}
-
r_{i,c^{-1}q}+\sum_{j=1}^{n}
s_{j}r_{j,q}
  & \left(\exists\, c\in Q_{Z}^{\,\prime}(u\circlearrowleft) :q\in Q^{+}_{Z}(c\mid\leftarrow  w),u\in V[i]
\right)
\\
\sum_{j=1}^{n}
s_{j}r_{j,q} & \left(\text{otherwise}\right).
\end{cases}
\end{array}
\]
Conversley we have $\sum_{q}t_{q}q\in e_{u}(I_{Z}+I_{V}+I_{\neg V})e_{w}$ for any $r_{j,q}\in R$ defining $t_{q}\in R$ as above.
\end{lem}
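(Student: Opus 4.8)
The plan is to derive the first assertion by combining \Cref{lem-canonical-form-in-J} and \Cref{lem-canonical-form-in-K}, and to establish the converse directly from the generators of $I_{V}$ and $I_{\neg V}$. For the forward direction, since $I=I_{Z}+I_{V}+I_{\neg V}$ is a sum of two-sided ideals I would write $z=z_{Z}+z_{V}+z_{\neg V}$ with $z_{\bullet}\in I_{\bullet}$, so that $z=e_{u}ze_{w}=e_{u}z_{Z}e_{w}+e_{u}z_{V}e_{w}+e_{u}z_{\neg V}e_{w}$ with each summand in the corresponding $e_{u}I_{\bullet}e_{w}$. Applying \Cref{lem-canonical-form-in-J} to $e_{u}z_{V}e_{w}$ and \Cref{lem-canonical-form-in-K} to $e_{u}z_{\neg V}e_{w}$ produces scalar families $a_{j,q,v}$ and $b_{j,q,v}$; since $e_{u}z_{Z}e_{w}\in I_{Z}$, adding the two canonical forms gives $z-\sum_{q}(t^{V}_{q}+t^{\neg V}_{q})q\in I_{Z}$, where $t^{V}_{q},t^{\neg V}_{q}$ are the coefficients of those two lemmas. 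It then remains to collapse the vertex index: put $R_{j,q,v}=a_{j,q,v}$ when $v\in V[j]$ and $R_{j,q,v}=b_{j,q,v}$ otherwise, and $r_{j,q}=\sum_{v\in V(q)}R_{j,q,v}$. A case-by-case comparison reproduces the two cases of the displayed formula for $t_{q}$: in the ``otherwise'' case $t^{V}_{q}+t^{\neg V}_{q}=\sum_{j}\sum_{v\in V(q)}s_{j}R_{j,q,v}=\sum_{j}s_{j}r_{j,q}$, and in the first case one uses $V(q)=V(c)\subseteq V[i]$ to see that the combined $s$-terms again total $\sum_{j}s_{j}r_{j,q}$, leaving the correction $-\sum_{v\in V(c^{-1}q)}a_{i,c^{-1}q,v}$, which must be identified with $-r_{i,c^{-1}q}$. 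The one delicate point is precisely this identification, since $c^{-1}q$ may itself lie in either case; it is settled by the containment $V(c^{-1}q)\subseteq V(c)\subseteq V[i]$ coming from \Cref{lem-factoring-through-prim-cycles-V-sets} and \Cref{rem-equiv-rel-prim-cycles}, which forces $R_{i,c^{-1}q,v}=a_{i,c^{-1}q,v}$ for every $v$ occurring, hence $r_{i,c^{-1}q}=\sum_{v\in V(c^{-1}q)}a_{i,c^{-1}q,v}$.

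For the converse I would fix arbitrary $r_{j,q}\in R$ and the associated $t_{q}$, and write $\sum_{q}t_{q}q=\sum_{q}\sum_{j=1}^{n}s_{j}r_{j,q}q-\sum_{q}r_{i,c_{q}^{-1}q}q$, the second sum running over those $q\in Q_{Z}(u\leftarrow w)$ that admit a cycle $c_{q}\in Q_{Z}^{\,\prime}(u\circlearrowleft)$ with $q\in Q^{+}_{Z}(c_{q}\mid\leftarrow w)$ — unique by \Cref{lem-primitive-cycles-sharing-arrows} and \Cref{lem-factoring-through-prim-cycles-V-sets} — and with $i$ determined by $u\in V[i]$. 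If $u\notin V$ this second sum is empty and every $s_{j}e_{u}$ lies in $I_{\neg V}$, so one may assume $u\in V[i]$. Modulo $I_{\neg V}$ each term $s_{j}r_{j,q}q$ with $j\neq i$ vanishes, and $s_{i}r_{i,q}q\equiv r_{i,q}\sigma_{u}q\pmod{I_{V}}$ because $s_{i}e_{u}-\sigma_{u}$ generates a summand of $I_{V}$. Now $\sigma_{u}q=\sum_{c}cq$, and $cq\in I_{Z}$ unless $cq$ is $Z$-admissible, while \Cref{lem-factoring-through-prim-cycles-V-sets} allows at most one $c\in Q_{Z}^{\,\prime}(u\circlearrowleft)$ with $cq$ admissible; hence $\sigma_{u}q\equiv c_{q}q\pmod{I_{Z}}$ when such a $c_{q}$ exists and $\sigma_{u}q\equiv 0$ otherwise. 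Summing over $q$ and reindexing by the bijections of \Cref{lem-factoring-through-prim-cycle-bijection}, the surviving contribution $\sum_{q}r_{i,q}c_{q}q$ becomes a sum over the paths $q'=c_{q}q$, which are exactly the paths of the first case and satisfy $c_{q}^{-1}q'=q$; this cancels the second displayed sum, so $\sum_{q}t_{q}q\in I_{Z}+I_{V}+I_{\neg V}$, and it lies in $e_{u}(\,\cdot\,)e_{w}$ since every $q$ runs from $w$ to $u$.

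The main obstacle is purely organisational: coordinating the two scalar families supplied by \Cref{lem-canonical-form-in-J} and \Cref{lem-canonical-form-in-K} once the vertex index has been collapsed, and verifying that this collapse is compatible with the index shift $q\mapsto c^{-1}q$ built into the first case of the formula. As explained, the containment $V(c^{-1}q)\subseteq V[i]$ is exactly what removes this friction; with it in hand, both directions reduce to routine manipulations in the notation of \Cref{subsection-path-algebras-over-local-rings}.
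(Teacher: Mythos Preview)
Your proposal is correct and follows essentially the same route as the paper's proof. For the forward direction you combine \Cref{lem-canonical-form-in-J} and \Cref{lem-canonical-form-in-K} and collapse the vertex index via the containment $V(c^{-1}q)\subseteq V(q)\subseteq V[i]$ exactly as the paper does; your definition $R_{j,q,v}=a_{j,q,v}$ for $v\in V[j]$ and $R_{j,q,v}=b_{j,q,v}$ otherwise is precisely the paper's device of extending $r'_{j,q,v}$ by zero off $V[j]$ and $r''_{j,q,v}$ by zero on $V[j]$ before summing. For the converse you unpack by hand the reindexing $q\mapsto c_{q}q$ that the paper packages as a single citation of \Cref{lem-technical-for-K-coefficients}, but the argument is the same.
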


\begin{proof}
By assumption we can choose $x\in I_{V}$ and $y\in I_{\neg V}$ such that $z-x-y\in I_{Z}$. 
By \Cref{lem-canonical-form-in-J} there exist elements  $r'_{j,q,v}\in R$   such that $x-\sum_{q}
t'_{q}q\in I_{Z}$  where
\[
\begin{array}{c}
t'_{q}=
\begin{cases}
\sum_{v\in V(q)}s_{i}r'_{i,q,v}
-
\sum_{v\in V(c^{-1}q)}r'_{i,c^{-1}q,v}  & \left(\exists\, c\in Q_{Z}^{\,\prime}(u\circlearrowleft) :q\in Q^{+}_{Z}(c\mid\leftarrow  w),u\in V[i]
\right),
\\
\sum_{j=1}^{n}
\sum_{v\in V[j]\cap V(q)}
s_{j}r'_{j,q,v} & \left(\text{otherwise}\right).
\end{cases}
\end{array}
\]
By \Cref{lem-canonical-form-in-K} there exist elements $r''_{j,q,v}\in R$  such that $y-\sum_{q}
t''_{q}q\in I_{Z} $  where
\[
\begin{array}{c}
t''_{q}=
\begin{cases}
\sum_{j=1,\, j\neq i}^{n}
\sum_{v\in V(q)}
s_{j}r''_{j,q,v} & \left(\exists\, c\in Q_{Z}^{\,\prime}(u\circlearrowleft)\, :\,q\in Q^{+}_{Z}(c\mid\leftarrow  w),u\in V[i]
\right),
\\
\sum_{j=1}^{n}
\sum_{v\in V(q)\setminus V[j]}
s_{j}r''_{j,q,v}
& \left(\text{otherwise}\right).
\end{cases}
\end{array}
\]
For each $j=1,\dots,d$, $q\in Q_{Z}(u\leftarrow w)$ and $v\in V(q)$ let $r'_{j,q,v}=0$ when $v\notin V[j]$ and let $r''_{j,q,v}=0$ when $v\in V[j]$. 
Hence when $q\in Q^{+}_{Z}(c\mid\leftarrow  w)$ for some $c\in Q_{Z}^{\,\prime}(u\circlearrowleft)$ we have $V(c^{-1}q)\subseteq V(q)\subseteq V[i]$ by Corollary \ref{rem-equiv-rel-prim-cycles}, and so $r''_{i,c^{-1}q,v}=0$ when $v\in V(c^{-1}q)$ and $r''_{i,q,v}=0$ when $v\in V(q)$. 
Combining the above, the first statement follows by letting $r_{j,q}=\sum_{v\in V(q)}r'_{j,q,v}+r''_{j,q,v}$ for each $j=1,\dots,n$ and each $q\in Q_{Z}(u\leftarrow w)$.
 
We now prove the second statement. 
For any $q\in Q_{Z}(u\leftarrow w)$, if $u\notin V[j]$ and $j=1,\dots, n$ then $s_{j}r_{j,q}q=s_{j}e_{u}r_{j,q}q\in e_{u} I_{\neg V}  e_{w}$. 
Hence if $u\notin V$ then  $t_{q}q\in e_{u}I_{\neg V} e_{w}$ for all $q$, so it suffices to assume $u\in V[i]$ for some $i=1,\dots, n$. 
As above $\sum_{j=1,\, i\neq j}^{n}s_{j}r_{j,q}q\in e_{u} I_{\neg V}  e_{w}$.  

By definition we have $s_{i}e_{u}-\sigma_{u}\in I_{V}$. 
Thus the second statement follows from \Cref{lem-technical-for-K-coefficients}, since 
\[
\begin{array}{c}
\sum_{q}\left(
(t_{q}-\sum_{j=1,\, i\neq j}^{n}s_{j}r_{j,q})e_{u}
+(s_{i}e_{u}-\sigma_{u})r_{i,q}
\right)
q=
-\sum_{q}
(
\sigma_{u}r_{i,q}
-
\overleftharpoon{r}{i,q}e_{u}
)
q
\end{array}
\]
where $\overleftharpoon{r}{i,q}=
    r_{i,c^{-1}q}$ when $q\in Q^{+}_{Z}(c\mid\leftarrow  w)$ for some $c\in Q_{Z}^{\,\prime}(u\circlearrowleft)$, and otherwise $\overleftharpoon{r}{i,q}=0$. 
\end{proof}

We note here that, although technical, \Cref{lem-canonical-form-in-I} was an important realisation for both the proof and the formulation of \Cref{thm-examples-of-string-algebras}.

\subsection{Regular sequences}
\label{subsec-regular}
The notion of a regular sequence comes from commutative algebra. 
Although well-known, to ensure the article is self contained we recall the definition  in \Cref{remark-regular-perms}. 

\begin{rem}
    \label{remark-regular-perms}
    Recall a sequence $(s_{1},\dots,s_{n})$  in $R$ is \emph{regular} if, for the ideals $\mathfrak{a}(0)=0$ and $\mathfrak{a}(i)=\langle s_{1},\dots, s_{i}\rangle$ in $R$
 for $i>0$, the coset $s_{i+1}+\mathfrak{a}(i)$ is a non-zero-divisor inside the ring $R/\mathfrak{a}(i)$  whenever $0\leq i<n$. 
 Since $(R,\maxideal,k)$ is noetherian and local, note that if $s_{1},\dots,s_{n}\in\maxideal$ and if $(s_{1},\dots, s_{n})$ is regular then for any permutation $\sigma\colon \{1,\dots,n\}\to\{1,\dots,n\}$ the sequence $(s_{\sigma (1)},\dots, s_{\sigma (n)})$ is also regular; see for example part $(\alpha)$ of the Remark, and then the following Corollary, in  \cite[pp. 126--127]{Matsumura-commutative-ring-theory}.  
\end{rem}

\begin{setup}
\label{setup-regular-scalars}
    In  \S\ref{subsec-regular} we recall and add to \Cref{setup-set-Z-of-paths} and \Cref{setup-scalars}. 
    From now on assume  that $s_{1},\dots,s_{n}\in \maxideal$ and that $(s_{1},\dots,s_{n})$ is a regular sequence in $R$. 
    Hence for any permutation $\sigma$ the sequence $(s_{\sigma (1)},\dots, s_{\sigma (n)})$ is also regular; see \Cref{remark-regular-perms}. 
\end{setup}

The proof of \Cref{thm-coeff-prim-relations-give-non-zero-admissible-paths} reveals how \Cref{lem-canonical-form-in-I} combines with the notion of a regular sequence. 

\begin{thm}
\label{thm-coeff-prim-relations-give-non-zero-admissible-paths}
    Let $x=\sum r_{q}q\in RQ$ where $q$ runs through $Q_{Z}(u\leftarrow w)$ for some vertices $u$ and $w$. 
    If $x\in e_{u}Ie_{w}$ then $r_{p}\in\maxideal$ for any path $p\in Q_{Z}(u\leftarrow w)$ of minimal length such that $r_{p}\neq 0$. 
\end{thm}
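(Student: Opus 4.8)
The plan is to feed the membership $x\in e_uIe_w$ into \Cref{lem-canonical-form-in-I} and extract a linear recursion that is then killed by regularity of $(s_1,\dots,s_n)$. First I would record the standard fact that, since $Z$ consists of paths, $I_Z=\langle Z\rangle$ is the $R$-span of the $Z$-inadmissible paths, so $RQ/I_Z$ is free over $R$ on the (images of the) $Z$-admissible paths. Applying \Cref{lem-canonical-form-in-I} to $z=x$ yields elements $r_{j,q}\in R$ with $x-\sum_q t_q q\in I_Z$, where $t_q$ is the two-case expression of that lemma and the sum is over $q\in Q_Z(u\leftarrow w)$; since both $x$ and $\sum_q t_q q$ are supported on $Z$-admissible paths, comparing coefficients in $RQ/I_Z$ forces $r_q=t_q$ for every $q\in Q_Z(u\leftarrow w)$. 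This converts the hypothesis into an explicit system relating the coefficients $r_q$ of $x$ to the auxiliary coefficients $r_{j,q}$.

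Next I would fix a path $p\in Q_Z(u\leftarrow w)$ of minimal length with $r_p\neq 0$ and argue by contradiction, assuming $r_p$ is a unit (recall $R$ is local, so failure of $r_p\in\maxideal$ means $r_p$ is a unit). If there is no $c\in Q^{\,\prime}_{Z}(u\circlearrowleft)$ with $p\in Q^{+}_{Z}(c\mid\leftarrow w)$, then the ``otherwise'' case of \Cref{lem-canonical-form-in-I} gives $r_p=t_p=\sum_{j=1}^n s_jr_{j,p}\in\maxideal$, a contradiction; so such a $c$ exists, and in particular $u\in V[i]$ for some $i$. I would then introduce the partial ``cancel $c$ on the left'' map $\phi\colon q\mapsto c^{-1}q$, defined on those $q\in Q_Z(u\leftarrow w)$ lying in $Q^{+}_{Z}(c\mid\leftarrow w)$ for some $c\in Q^{\,\prime}_{Z}(u\circlearrowleft)$; this $c$ is unique by \Cref{lem-primitive-cycles-sharing-arrows} (equivalently \Cref{lem-factoring-through-prim-cycles-V-sets}), so $\phi$ is single-valued, and it strictly decreases length because $Z$-primitive cycles are non-trivial. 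Starting from $p_0=p$, iterating $p_{j+1}=\phi(p_j)$ stays inside $Q_Z(u\leftarrow w)$ and, by the length drop, terminates at some $p_k$ with $\phi(p_k)$ undefined and $k\geq 1$; and $r_{p_j}=0$ for $1\le j\le k$ by minimality of $p$.

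Plugging $p_0,\dots,p_k$ into $r_q=t_q$ and the two cases of \Cref{lem-canonical-form-in-I} (using $\phi(p_j)=p_{j+1}$ for $j<k$ and that all $p_j$ share head $u\in V[i]$) gives
\[
r_{p_0}=-r_{i,p_1}+\sum_{l=1}^n s_lr_{l,p_0},\qquad 0=-r_{i,p_{j+1}}+\sum_{l=1}^n s_lr_{l,p_j}\ \ (1\le j\le k-1),\qquad 0=\sum_{l=1}^n s_lr_{l,p_k}.
\]
I would then reduce modulo $J=\langle s_l\mid l\neq i\rangle$: since $J\subseteq\maxideal$ the ring $\bar R=R/J$ is nonzero, and since a permutation of $(s_1,\dots,s_n)$ placing $s_i$ last is again regular (\Cref{remark-regular-perms}), the image $\bar s_i$ is a non-zero-divisor in $\bar R$. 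Modulo $J$ the last two relations collapse to $\bar r_{i,p_{j+1}}=\bar s_i\bar r_{i,p_j}$ and $\bar s_i\bar r_{i,p_k}=0$, whence $\bar s_i^{\,k}\bar r_{i,p_1}=0$. But the first relation shows $r_{i,p_1}=\big(\sum_l s_lr_{l,p_0}\big)-r_{p_0}$ is a unit of $R$ (it is $-r_p$ plus an element of $\maxideal$), so $\bar r_{i,p_1}$ is a unit of $\bar R$; cancelling the non-zero-divisor $\bar s_i$ then gives $\bar r_{i,p_1}=0$, which is impossible in the nonzero ring $\bar R$. This contradiction shows $r_p\in\maxideal$.

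The part I expect to require the most care is precisely the length of this chase: \Cref{lem-canonical-form-in-I} only relates $r_{p_0}$ to the auxiliary coefficient $r_{i,\phi(p_0)}$, and the vanishing of $r_{\phi(p_0)}$ only pushes the relation one rung further down, so one is forced to run the argument along the entire $\phi$-orbit of $p$ and accumulate the power $\bar s_i^{\,k}$ before regularity can be brought to bear. Checking that this orbit is finite and that every $p_j$ is a strictly shorter member of $Q_Z(u\leftarrow w)$ than $p$ — which is where $Z$-primitivity and the uniqueness clause of \Cref{lem-primitive-cycles-sharing-arrows} enter — is the supporting point one must not skip.
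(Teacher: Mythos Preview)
Your proposal is correct and follows essentially the same route as the paper: apply \Cref{lem-canonical-form-in-I} to identify $r_q=t_q$, strip successive $Z$-primitive cycle prefixes from $p$ to build a chain, and use that $\bar s_i$ is a non-zero-divisor in $R/\langle s_l:l\neq i\rangle$ to force the relevant auxiliary coefficients into $\mathfrak n_i$. The only cosmetic differences are that the paper runs the induction upward (from the shortest $p'=p(0)$ to $p(m-1)$, cancelling $\bar s_i$ at each step) rather than accumulating $\bar s_i^{\,k}$ and cancelling once, and that the paper concludes $t_p\in\maxideal$ directly rather than framing it as a contradiction.
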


\begin{proof}
By \Cref{lem-canonical-form-in-I} we have $x-\sum_{q}
t_{q}q\in I_{Z} $  where  
\[
\begin{array}{c}
t_{q}=
\begin{cases}
-
r_{i,c^{-1}q} +\sum_{j=1}^{d}
s_{j}r_{j,q}
 & \left(\exists\, c\in Q_{Z}^{\,\prime}(u\circlearrowleft) :q\in Q^{+}_{Z}(c\mid\leftarrow  w),u\in V[i]
\right)
\\
\sum_{j=1}^{d}
s_{j}r_{j,q} & \left(\text{otherwise}\right).
\end{cases}
\end{array}
\]
for some  $r_{j,q}\in R$ where $j=1,\dots,n$ and $q\in Q_{Z}(u\leftarrow w)$. 
Since $x-\sum_{q}t_{q}q$ is an $R$-linear combination of $Z$-admissible paths that lies in $I_{Z}$, it is also an $R$-linear combination of $Z$-inadmissible paths. 
Since the paths form an $R$-basis of the path algebra $RQ$, we have that  $x=\sum_{q}t_{q}q$, and furthermore, that $t_{p}\notin \maxideal$ and $t_{q}=0$ for all $Z$-admissible paths $q\in Q_{Z}(u\leftarrow w)$ that are shorter than $p$. 
 For each  $j=1,\dots,n$ let $\mathfrak{n}_{j}=\langle s_{h}\colon j\neq h=1,\dots,n\rangle $, an ideal  of $R$. 
We  consider two special cases (a) and (b) below. 

(a) In case (a)  suppose $q\in Q_{Z}(u\leftarrow w)$ and $q\notin Q^{+}_{Z}(c\mid\leftarrow  w) $ for all $c\in Q_{Z}^{\,\prime}(u\circlearrowleft)$. This means $t_{q}=\sum_{j=1}^{d}
s_{j}r_{j,q}\in\maxideal$. Now additionally assume $q$ is not longer than $p$. 
Hence by assumption $t_{q}=0$.  
Combined, this means that for each $j=1,\dots,n$ we have $s_{j}r_{j,q}\in \mathfrak{n}_{j}$. 
Given that $(s_{1},\dots,s_{n})$ is a regular sequence, $s_{j}+\mathfrak{n}_{j}$ is regular in the quotient ring $R/\mathfrak{n}_{j}$, and altogether we have $r_{j,q}\in \mathfrak{n}_{j}$; see  \Cref{remark-regular-perms}. 

(b) In case (b) suppose $q\in Q_{Z}(u\leftarrow w)$ and $q\in Q^{+}_{Z}(c\mid\leftarrow  w) $ for some $c\in Q_{Z}^{\,\prime}(u\circlearrowleft)$. 
By \Cref{lem-primitive-cycles-sharing-arrows} this $c$ is unique, and we choose $m>0$ maximal such that $q=c^{m}q'$ for some $q'\in Q_{Z}(u\leftarrow w)$. 
If $q'\in Q^{+}_{Z}(c'\mid\leftarrow  w) $ for some $c'\in Q_{Z}^{\,\prime}(u\circlearrowleft)$ then $Z\nmid cc',c'c'$ which means $c$ and $c'$ have the same right arrow by (SP1), and so $c'=c$ by \Cref{lem-primitive-cycles-sharing-arrows}, contradicting the maximality of $m$. 
So $q'\notin Q^{+}_{Z}(c'\mid\leftarrow  w) $ for all $c'\in Q_{Z}^{\,\prime}(u\circlearrowleft)$.

By case (a) we can assume that $p=c^{m}p'$ for some $c\in Q_{Z}^{\,\prime}(u\circlearrowleft)$, for otherwise and $m>0$ maximal. 
Choose $i=1,\dots,n$ such that $v\in V[i]$. 
By case (b) we must have  $p'\notin Q^{+}_{Z}(c'\mid\leftarrow  w) $ for all $c'\in Q_{Z}^{\,\prime}(u\circlearrowleft)$. 
Let $p(0)=p'$ and $p(l)=c^{l}p'$ for each $l=1,\dots,m$. 
For any such $l$ we have $p(l-1)=c^{-1}p(l)$, and hence
\[
\begin{array}{cc}
t_{p(l)}
= 
-r_{i,p(l-1)}+s_{i}r_{i,p(l)}
+
\sum_{j=1,\, j\neq i}^{d}
s_{j}r_{j,p(l)}
\in s_{i}r_{i,p(l)}-r_{i,p(l-1)} + \mathfrak{n}_{i}.
&
(*)
\end{array}
\]
We claim that $r_{i,p(l-1)}\in\mathfrak{n}_{i}$ for each $l$.  
For $l=1$, since $p'=p(0)$ is shorter than $p$, by case (a) we have $r_{i,p(0)}\in\mathfrak{n}_{i}$, and so the claim holds. 
When $l<m$, assuming $r_{i,p(l-1)}\in\mathfrak{n}_{i}$ the inclusion $(*)$ above gives $s_{i}r_{i,p(l)}\in\mathfrak{n}_{i}$. 
Since the sequence $(s_{\sigma(1)},\dots,s_{\sigma(n)})$ is regular for each $\sigma$ we conclude $r_{j,p(l)}\in \mathfrak{n}_{i}$. 

Thus the claim holds by induction. Taking $l=m$ and recalling $p(m)=p$, the claim together with the inclusion $(*)$ above gives $t_{p}\in s_{i}r_{i,p}+\mathfrak{n}_{i}\subseteq \maxideal$, as required. 
\end{proof}

In \Cref{cor-coeff-prim-relations-give-non-zero-admissible-paths} we consider $x\in Q_{Z}(u\leftarrow w )$ in \Cref{thm-coeff-prim-relations-give-non-zero-admissible-paths}, and conclude that the $Z$-admissible paths define non-zero elements in the $R$-algebra $RQ/I$, providing a key step in the proof of \Cref{thm-examples-of-string-algebras}. 

\begin{cor}
    \label{cor-coeff-prim-relations-give-non-zero-admissible-paths}
Any $Z$-admissible path lies outside the ideal $I=I_{Z}+I_{V}+I_{\neg V}$ of $RQ$. 
\end{cor}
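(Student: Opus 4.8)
The plan is to read this off directly from \Cref{thm-coeff-prim-relations-give-non-zero-admissible-paths}. Suppose, for contradiction, that some $Z$-admissible path $p$ lies in $I$, and set $u=h(p)$ and $w=t(p)$. Since $I$ is a two-sided ideal we have $p=e_{u}pe_{w}\in e_{u}Ie_{w}$. As $p$ is $Z$-admissible it belongs to $Q_{Z}(u\leftarrow w)$, so $p$ may be viewed as the element $x=\sum_{q\in Q_{Z}(u\leftarrow w)}r_{q}q$ of $RQ$ with $r_{p}=1$ and $r_{q}=0$ for every $q\neq p$. Since $p$ is the only path occurring with nonzero coefficient, it is in particular (trivially) a path of minimal length with $r_{p}\neq 0$, so \Cref{thm-coeff-prim-relations-give-non-zero-admissible-paths} applies and gives $r_{p}=1\in\maxideal$. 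This contradicts the fact that $\maxideal$ is a proper ideal of the local ring $R$, completing the argument.

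I do not anticipate a real obstacle: the corollary is exactly the instance $x=p$ of the theorem. The only hypothesis of the theorem that needs a word of comment — namely that $p$ be of minimal length among the paths appearing with a nonzero coefficient — holds vacuously here, because exactly one coefficient is nonzero. For trivial paths $e_{v}$ this covers the case $u=w=v$, $x=e_{v}$; if one preferred not to appeal to the theorem there, one could instead observe from the shape of the generators in $(*)$ that $I\subseteq A+\maxideal Q$, whereas $e_{v}$ has nonzero image in $RQ/(A+\maxideal Q)\cong k^{\,Q_{0}}$ and so lies outside $I$. Either way, routing the whole statement through \Cref{thm-coeff-prim-relations-give-non-zero-admissible-paths} is the most uniform route.
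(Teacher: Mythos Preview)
Your argument is correct and matches the paper's own approach exactly: the paper states this corollary immediately after \Cref{thm-coeff-prim-relations-give-non-zero-admissible-paths} with the remark that one simply takes $x$ to be a single $Z$-admissible path, which forces $1\in\maxideal$. Your added observation about trivial paths via $I\subseteq A+\maxideal Q$ is a harmless alternative but, as you note, unnecessary since the theorem already covers that case.
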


It is convenient here to state some further consequences of \Cref{lem-canonical-form-in-I}. 
These consequences, namely \Cref{lem-string-multiserial-module-property}  and \Cref{lem-string-multiserial-module-property-dual}, are used in the proof of \Cref{thm-examples-of-string-algebras}.

\begin{lem}
\label{lem-string-multiserial-module-property} 
Let $I=I_{Z}+I_{V}+I_{\neg V}$, let $u$ and $w$ be vertices, let $a_{1},\dots,a_{m}$ be the distinct arrows with $t(a_{l})=w$ for each $l=1,\dots,m$, and let $x_{1},\dots,x_{m}\in e_{u}RQ$ where $x_{l}a_{l}=\sum_{q\in Q_{Z}(u\leftarrow w)}t_{q,l}q$  for some $t_{q,l}\in R$. 
If $\sum_{l=1}^{m}x_{l}a_{l}\in I$ then $x_{l}a_{l}\in I$ for each $l$. 
\end{lem}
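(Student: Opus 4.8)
The plan is to apply the canonical form of \Cref{lem-canonical-form-in-I} to $\sum_{l}x_{l}a_{l}$ and then ``split'' the witnessing coefficients according to the right arrow.

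First I record some structure. Since $x_{l}\in e_{u}RQ$, the product $x_{l}a_{l}$ is an $R$-combination of paths of the form $pa_{l}$, so it is supported on paths from $w$ to $u$ whose right arrow is $a_{l}$; by hypothesis it is also supported on $Z$-admissible paths, hence $x_{l}a_{l}=\sum_{q\in\Pi_{l}}t_{q,l}q$, where $\Pi_{l}\subseteq Q_{Z}(u\leftarrow w)$ is the set of $Z$-admissible paths with right arrow $a_{l}$. Because a non-trivial path has a unique right arrow and $a_{1},\dots,a_{m}$ are \emph{all} the arrows with tail $w$, the $\Pi_{l}$ are pairwise disjoint and $Q_{Z}(u\leftarrow w)=\{e_{w}\mid u=w\}\sqcup\bigsqcup_{l}\Pi_{l}$. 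Writing $z=\sum_{l}x_{l}a_{l}=\sum_{q}t_{q}q$, this means $t_{q}=t_{q,l}$ for $q\in\Pi_{l}$, while $t_{e_{w}}=0$ (an $e_{w}$ has no right arrow).

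Now $z\in e_{u}Ie_{w}$, so \Cref{lem-canonical-form-in-I} produces $r_{j,q}\in R$ with $z-\sum_{q}\widehat{t}_{q}q\in I_{Z}$, where $\widehat{t}_{q}$ is the canonical expression built from the $r_{j,q}$. Since $I_{Z}=\langle Z\rangle$ is the $R$-span of the $Z$-inadmissible paths while both $z$ and $\sum_{q}\widehat{t}_{q}q$ are supported on $Z$-admissible paths --- which form a basis of $RQ/I_{Z}$ --- we in fact get $t_{q}=\widehat{t}_{q}$ for all $q\in Q_{Z}(u\leftarrow w)$. Reading off the coefficient of $e_{w}$ (when $u=w$) yields $\sum_{j=1}^{n}s_{j}r_{j,e_{w}}=0$, whence, running the regular-sequence argument from the proof of \Cref{thm-coeff-prim-relations-give-non-zero-admissible-paths} (and using that every permutation of $(s_{1},\dots,s_{n})$ is regular), $r_{i,e_{w}}\in\langle s_{j}\mid j\neq i\rangle$ for the index $i$ with $u\in V[i]$; fix $r'_{i,j}\in R$ with $r_{i,e_{w}}=\sum_{j\neq i}s_{j}r'_{i,j}$.

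Finally, fixing $l$, I would define a new family $r^{(l)}_{j,q}\in R$ tailored so that its canonical expression equals $t_{q,l}$ for every $q$: set $r^{(l)}_{j,q}=r_{j,q}$ when $q\in\Pi_{l}$ and $q$ is not the $Z$-primitive cycle $c^{(l)}$ at $u$ with right arrow $a_{l}$ (if such a cycle exists --- it is unique by \Cref{lem-primitive-cycles-sharing-arrows} and forces $u=w$); set $r^{(l)}_{i,c^{(l)}}=r_{i,c^{(l)}}$ and $r^{(l)}_{j,c^{(l)}}=r_{j,c^{(l)}}-r'_{i,j}$ for $j\neq i$; and set $r^{(l)}_{j,q}=0$ for all other $q$ (in particular for $q=e_{w}$ and for $q\in\Pi_{l'}$ with $l'\neq l$). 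The verification that the canonical expression of $(r^{(l)}_{j,q})$ equals $t_{q,l}$ for all $q$ is a case analysis according to whether $q\in\Pi_{l}$ and whether $q\in Q^{+}_{Z}(c\mid\leftarrow w)$ for a $Z$-primitive cycle $c$ at $u$; it repeatedly uses that $q=cp$ with $p$ non-trivial forces $p$ and $q$ to have the same right arrow, that such a $c$ is unique (\Cref{lem-factoring-through-prim-cycles-V-sets}), and that the only $q$ with $c^{-1}q=e_{w}$ is $q=c^{(l)}$. The delicate configuration is precisely $u=w$ with $q=c^{(l)}$: the ``shared'' coefficient $r_{i,e_{w}}$ cannot simultaneously be attributed to every $l$ possessing a primitive cycle at $u$, and it is the rerouting $r_{i,e_{w}}=\sum_{j\neq i}s_{j}r'_{i,j}$ into the components $r^{(l)}_{j,c^{(l)}}$ with $j\neq i$ --- hence the regular-sequence input --- that resolves it. With the verification in hand, the converse half of \Cref{lem-canonical-form-in-I} gives $x_{l}a_{l}=\sum_{q}t_{q,l}q\in e_{u}(I_{Z}+I_{V}+I_{\neg V})e_{w}\subseteq I$, as required.
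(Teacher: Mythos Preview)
Your proof is correct and follows essentially the same route as the paper's: apply \Cref{lem-canonical-form-in-I} to obtain witnessing coefficients $r_{j,q}$, use the regular-sequence property to control the trivial-path coefficient $r_{i,e_{w}}$, split the remaining $r_{j,q}$ according to right arrow, and invoke the converse of \Cref{lem-canonical-form-in-I}. Your explicit ``rerouting'' of $r_{i,e_{w}}=\sum_{j\neq i}s_{j}r'_{i,j}$ into the components $r^{(l)}_{j,c^{(l)}}$ is exactly what underlies the paper's shorthand ``we can assume $r_{i,e}=0$''; the only harmless omission is that the easy case $u\notin V$ (no primitive cycles at $u$, so no $c^{(l)}$ and no rerouting needed) is handled implicitly rather than stated.
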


\begin{proof}
Firstly note that $\sum_{l=1}^{m}x_{l}a_{l}\in e_{u}Ie_{w}$ by definition, and so  $\sum_{l=1}^{m}x_{l}a_{l}=\sum_{p}t_{p}p$ where
\[
\begin{array}{c}
t_{p}=
\begin{cases}
-
r_{i,c^{-1}p}+\sum_{j=1}^{n}
s_{j}r_{j,p}
  & \left(\exists\, c\in Q_{Z}^{\,\prime}(u\circlearrowleft) :p\in Q^{+}_{Z}(c\mid\leftarrow  w),u\in V[i]
\right)
\\
\sum_{j=1}^{n}
s_{j}r_{j,p} & \left(\text{otherwise}\right).
\end{cases}
\end{array}
\]
for some $r_{j,p}\in R$ with $j=1,\dots,n$ and $p \in Q_{Z}(u\leftarrow w)$ by \Cref{lem-canonical-form-in-I}. 
Since the paths in $Q$ form a basis for $RQ$ we have $t_{p}=\sum_{l}t_{p,l}$ for each $p$.  
Furthermore $t_{p,l}=0$ for any $l$ and $p$ that does not have right arrow $a_{l}$. 
Let $r_{j,q'}=0$ for all paths $q'\notin Q_{Z}(u\leftarrow w)$.

Let $e$ be the trivial path at $u$.  
We assert that $r_{i,e}e\in I$ for each $i$. 
If $w\neq u$ then $r_{j,e}=0$. 
Hence for our assertion it suffices to assume $u=w$. 
Since $0=t_{e}=\sum_{j=1}^{n}s_{j}r_{j,e}=0$ we have $s_{i}r_{i,e}\in \mathfrak{a}$ where  $\mathfrak{a}=\langle s_{j}\colon i\neq j=1,\dots,n\rangle $, so  $r_{i,e}\in \mathfrak{a}$ since  $(s_{1},\dots,s_{n})$ is regular. 
If $u\notin V$ then $s_{j}e\in I_{\neg V}$ for all $j$, giving $r_{i,e}e\in I_{\neg V}$ since $r_{i,e}\in \mathfrak{a}$. 
Otherwise $u\in V[i]$ for some $i$. 
Again this means $r_{i,e}e\in I_{\neg V}$ since $r_{i,e}\in \mathfrak{a}$ and $s_{j}e\in I_{\neg V}$ for each $j\neq i$. 

Hence our assertion holds, which means we can assume that $r_{i,e}=0$. Define $r_{j,p,l}\in R$ for each $j$, $p$ and $l$ as follows. 
If $p$ is non-trivial and has right arrow $a_{l}$ we let $r_{j,p,l}=r_{j,p}$. In all other cases we let $r_{j,p,l}=0$. 
Now define $t'_{p,l}\in R$ for each $p$ and each $l$, by
\[
\begin{array}{c}
t'_{p,l}=
\begin{cases}
-
r_{i,c^{-1}p,l}+\sum_{j=1}^{n}
s_{j}r_{j,p,l}
  & \left(\exists\, c\in Q_{Z}^{\,\prime}(u\circlearrowleft) :p\in Q^{+}_{Z}(c\mid\leftarrow  w),u\in V[i]
\right)
\\
\sum_{j=1}^{n}
s_{j}r_{j,p,l} & \left(\text{otherwise}\right).
\end{cases}
\end{array}
\]
By construction $t_{p}=\sum_{l=1}^{m}t_{p,l}$ for each $p$. 
Furthermore, by identifying expressions in $RQ$ with right arrow $a_{l}$, we have $x_{l}a_{l}=\sum_{p}t_{p,l}p$ for each $l$, and so $x_{l}a_{l}\in I$ by \Cref{lem-canonical-form-in-I}. 
\end{proof}

\begin{lem}
\label{lem-canonical-form-in-I-dual}
Let  $u$ and  $w$ be vertices  and 
 $z\in  e_{u}(I_{Z}+I_{V}+I_{\neg V})e_{w}$. There exist $r_{j,q}\in R$ for $j=1,\dots,n$ and $q\in Q_{Z}(u\leftarrow w)$ such that $z-\sum_{q}
t_{q}q\in I_{Z} $  where
\[
\begin{array}{c}
t_{q}=
\begin{cases}
-
r_{i,qd^{-1}} +\sum_{j=1}^{n}
s_{j}r_{j,q}
 & \left(\exists\, d\in Q_{Z}^{\,\prime}(w\circlearrowleft)\, :\,q\in Q^{+}_{Z}(u\leftarrow \mid\,d)
\right)
\\
\sum_{j=1}^{n}
s_{j}r_{j,q} & \left(\text{otherwise}\right).
\end{cases}
\end{array}
\]
Conversley we have $\sum_{q}t_{q}q\in e_{u}(I_{Z}+I_{V}+I_{\neg V})e_{w}$ for any $r_{j,q}\in R$ defining $t_{q}\in R$ as above.
\end{lem}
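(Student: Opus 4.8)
The statement is the left--right mirror of \Cref{lem-canonical-form-in-I}. In that proof one puts an element of $e_{u}Ie_{w}$ in canonical form by peeling a $Z$-primitive cycle $c\in Q^{\,\prime}_{Z}(u\circlearrowleft)$ off the \emph{left} of each $Z$-admissible path $q$, using the left-hand equalities of \Cref{lem-one-sided-form-of-coeff-prim-relations}, the first containment of \Cref{lem-technical-for-K-coefficients}, part (1) of \Cref{lem-factoring-through-prim-cycles-V-sets}, and part (3) of \Cref{rem-equiv-rel-prim-cycles}. For the present lemma I would run the very same argument, but peel a $Z$-primitive cycle $d\in Q^{\,\prime}_{Z}(w\circlearrowleft)$ off the \emph{right} of $q$: the right-hand equalities of \Cref{lem-one-sided-form-of-coeff-prim-relations}, the second containment of \Cref{lem-technical-for-K-coefficients}, part (2) of \Cref{lem-factoring-through-prim-cycles-V-sets}, and part (4) of \Cref{rem-equiv-rel-prim-cycles} supply exactly the mirrored inputs, with the roles of $u$, ``left arrow'', ``$c^{-1}q$'' everywhere replaced by $w$, ``right arrow'', ``$qd^{-1}$''.

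Concretely, I would first prove the dual of \Cref{lem-canonical-form-in-J}: writing $x\in e_{u}I_{V}e_{w}$ as a sum of terms $r''_{j,v,q''}\,q''(s_{j}e_{v}-\sigma_{v})\,r'_{j,v,q'}\,q'$, slide every factor $s_{j}e_{v}-\sigma_{v}$ to the far right via the right-hand equality of \Cref{lem-one-sided-form-of-coeff-prim-relations}, obtaining $x-\sum_{q}\sum_{v\in V[j]\cap V(q)}r_{j,q,v}\,q\,(s_{j}e_{t(q)}-\sigma_{t(q)})\in I_{Z}$, and then feed this into the second containment of \Cref{lem-technical-for-K-coefficients} to absorb the $\sigma_{t(q)}$-parts into coefficients $\overrightharpoon{r}{q}$ indexed by right-factorisations $q\in Q^{+}_{Z}(u\leftarrow\mid d)$. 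The dual of \Cref{lem-canonical-form-in-K} is proved the same way for $y\in e_{u}I_{\neg V}e_{w}$, the only subtlety being that when $q\in Q^{+}_{Z}(u\leftarrow\mid d)$ with $t(q)=w\in V[i]$ one rewrites $\sum_{j}\sum_{v\in V(q)\setminus V[j]}(\cdots)=\sum_{j\neq i}\sum_{v\in V(q)}(\cdots)$, legitimate since $V(q)=V(d)\subseteq V[i]$ by \Cref{lem-factoring-through-prim-cycles-V-sets}(2) and \Cref{rem-equiv-rel-prim-cycles}(4). Finally I would combine the two dual lemmas exactly as in \Cref{lem-canonical-form-in-I}: pick $x\in I_{V}$, $y\in I_{\neg V}$ with $z-x-y\in I_{Z}$, zero out the overlap ($r'_{j,q,v}=0$ for $v\notin V[j]$, $r''_{j,q,v}=0$ for $v\in V[j]$, and $r''_{i,qd^{-1},v}=0$ since $V(qd^{-1})\subseteq V(q)=V(d)\subseteq V[i]$), and set $r_{j,q}=\sum_{v\in V(q)}r'_{j,q,v}+r''_{j,q,v}$. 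The converse inclusion is as before: terms $s_{j}r_{j,q}q$ with $w\notin V[j]$ lie in $e_{u}I_{\neg V}e_{w}$, and when $w\in V[i]$ the remaining combination is handled by the second containment of \Cref{lem-technical-for-K-coefficients} together with $s_{i}e_{w}-\sigma_{w}\in I_{V}$.

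An alternative, shorter route is to deduce the lemma formally from \Cref{lem-canonical-form-in-I} applied to the opposite quiver $Q^{\mathrm{op}}$ with the reversed relation set $Z^{\mathrm{op}}$. One checks that $(Q^{\mathrm{op}},Z^{\mathrm{op}})$ is again special, that the $Z^{\mathrm{op}}$-primitive cycles are the reverses of the $Z$-primitive cycles (so the primitive-nerve partition is unchanged and the scalars $s_{i}$ stay attached to the same parts), and that the $R$-algebra anti-isomorphism $RQ\to RQ^{\mathrm{op}}$ reversing paths carries each $\sigma_{v}$ to its counterpart, hence $I$ to the correspondingly-defined ideal, while exchanging $Q^{+}_{Z}(u\leftarrow\mid d)$ with $Q^{+}_{Z^{\mathrm{op}}}(d^{\mathrm{op}}\mid\leftarrow u)$ and $qd^{-1}$ with $(d^{\mathrm{op}})^{-1}q^{\mathrm{op}}$. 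Transporting the canonical form of \Cref{lem-canonical-form-in-I} for $z^{\mathrm{op}}$ back along this anti-isomorphism yields the asserted form for $z$.

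Since the computation is entirely parallel to \Cref{lem-canonical-form-in-I}, the one thing that actually needs care — the main (minor) obstacle — is the bookkeeping that the distinguished part of the nerve partition is now the $V[i]$ containing $w$ rather than $u$, that the peelable cycles $d$ sit at $w$, and that the reindexing $q\mapsto qd^{-1}$ shortens $q$ from the right; one must ensure that \Cref{remark-special-pair-unique-paths}, \Cref{lem-primitive-cycles-sharing-arrows} and \Cref{lem-factoring-through-prim-cycles-V-sets} are each invoked in their right-handed forms. Taking the opposite-quiver route instead, the only point needing a line of justification is that path reversal sends $I$ to $I^{\mathrm{op}}$, which holds because $\sigma_{v}$ is a sum of primitive cycles at $v$ and reversal permutes that set.
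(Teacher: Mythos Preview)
Your proposal is correct and matches the paper's own treatment: the paper simply states that \Cref{lem-canonical-form-in-I-dual} is dual to \Cref{lem-canonical-form-in-I} and omits the proof, and both of your routes (rerunning the argument with the right-handed versions of \Cref{lem-one-sided-form-of-coeff-prim-relations}, \Cref{lem-technical-for-K-coefficients}, \Cref{lem-factoring-through-prim-cycles-V-sets}, \Cref{rem-equiv-rel-prim-cycles}, or transporting along the opposite-quiver anti-isomorphism) are exactly the intended dualisation.
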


\Cref{lem-canonical-form-in-I-dual} is dual to \Cref{lem-canonical-form-in-I}, hence we omit the proof. 
By instead applying \Cref{lem-canonical-form-in-I-dual} in the proof of \Cref{lem-string-multiserial-module-property}, we obtain \Cref{lem-string-multiserial-module-property-dual}.

\begin{lem}
\label{lem-string-multiserial-module-property-dual} 
Let $I=I_{Z}+I_{V}+I_{\neg V}$, let $u$ and $w$ be vertices, let $a_{1},\dots,a_{m}$ be the distinct arrows with $h(a_{l})=u$ for each $l=1,\dots,m$, and let $x_{1},\dots,x_{m}\in RQe_{w} $ where $x_{l}a_{l}=\sum_{q\in Q_{Z}(u\leftarrow w)}t_{q,l}q$  for some $t_{q,l}\in R$. 
If $\sum_{l=1}^{m}a_{l}x_{l}\in I$ then $x_{l}a_{l}\in I$ for each $l$. 

\end{lem}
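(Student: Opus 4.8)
The plan is to transcribe the proof of \Cref{lem-string-multiserial-module-property} under the left/right symmetry, using \Cref{lem-canonical-form-in-I-dual} in place of \Cref{lem-canonical-form-in-I} and decomposing elements of $RQ$ according to their \emph{left} arrow rather than their right arrow. First I would observe that $\sum_{l=1}^{m}a_{l}x_{l}$ lies in $e_{u}(I_{Z}+I_{V}+I_{\neg V})e_{w}$ by construction, so \Cref{lem-canonical-form-in-I-dual} supplies scalars $r_{j,p}\in R$ ($1\leq j\leq n$, $p\in Q_{Z}(u\leftarrow w)$) with $\sum_{l}a_{l}x_{l}-\sum_{p}t_{p}p\in I_{Z}$, where $t_{p}=-r_{i,pd^{-1}}+\sum_{j}s_{j}r_{j,p}$ when $p\in Q^{+}_{Z}(u\leftarrow\mid d)$ for the $Z$-primitive cycle $d$ at $w$ (unique by \Cref{lem-primitive-cycles-sharing-arrows}) with $w\in V[i]$, and $t_{p}=\sum_{j}s_{j}r_{j,p}$ otherwise. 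Since the $Z$-admissible paths form an $R$-basis of $RQ/I_{Z}$, comparing coefficients gives $t_{p}=\sum_{l}t_{p,l}$ for each $p$, where moreover $t_{p,l}=0$ unless $p$ is non-trivial with left arrow $a_{l}$.

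Next I would normalise away the trivial path $e=e_{u}$, which occurs among the $p$ only when $u=w$, exactly as in \Cref{lem-string-multiserial-module-property}: if $u\neq w$ then $r_{j,e}=0$ for all $j$, while if $u=w$ then $t_{e}=\sum_{j}s_{j}r_{j,e}=0$ forces $s_{i}r_{i,e}\in\langle s_{j}\colon j\neq i\rangle$, hence $r_{i,e}\in\langle s_{j}\colon j\neq i\rangle$ by regularity of $(s_{1},\dots,s_{n})$ (see \Cref{remark-regular-perms}); in either case $r_{i,e}e\in I_{\neg V}$ whether or not $u\in V$, so one may assume $r_{i,e}=0$ for every $i$. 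Then I would split the scalars along left arrows: put $r_{j,p,l}=r_{j,p}$ when $p$ is non-trivial with left arrow $a_{l}$ and $r_{j,p,l}=0$ otherwise, so that $r_{j,p}=\sum_{l}r_{j,p,l}$ for every $p$, using $r_{j,e}=0$; let $t'_{p,l}$ be obtained from the families $(r_{j,p,l})_{j,p}$ by the same case split as for $t_{p}$. The second assertion of \Cref{lem-canonical-form-in-I-dual} then gives $\sum_{p}t'_{p,l}p\in e_{u}(I_{Z}+I_{V}+I_{\neg V})e_{w}=I$ for each $l$, and matching coefficients of paths with left arrow $a_{l}$ identifies this element with $a_{l}x_{l}$, so $a_{l}x_{l}\in I$ for every $l$.

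The step I expect to need the most care is checking $t_{p}=\sum_{l}t'_{p,l}$, namely that the left-arrow split is compatible with the correction term $-r_{i,pd^{-1}}$ of the dual canonical form. Concretely, when $p\in Q^{+}_{Z}(u\leftarrow\mid d)$ one must verify that $p$ and $pd^{-1}$ have the same left arrow whenever $pd^{-1}$ is non-trivial, while $pd^{-1}=e$ forces $r_{i,pd^{-1}}=r_{i,e}=0$ after the normalisation above; this is the mirror image of the point about $c^{-1}q$ and right arrows in \Cref{lem-string-multiserial-module-property}. Granting it, and since both $t_{p,l}$ and $t'_{p,l}$ vanish unless $p$ has left arrow $a_{l}$ while a non-trivial path has a unique left arrow $a_{l_{0}}$, one gets $t_{p,l_{0}}=t'_{p,l_{0}}=t_{p}$ and $t_{p,l}=t'_{p,l}=0$ for $l\neq l_{0}$, and the conclusion follows.
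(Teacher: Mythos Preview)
Your proposal is correct and matches the paper's approach exactly: the paper simply states that \Cref{lem-string-multiserial-module-property-dual} follows by applying \Cref{lem-canonical-form-in-I-dual} in place of \Cref{lem-canonical-form-in-I} in the proof of \Cref{lem-string-multiserial-module-property}, and you have carried out precisely this dualisation, splitting along left arrows rather than right arrows. Your explicit verification that $p$ and $pd^{-1}$ share the same left arrow whenever $pd^{-1}$ is non-trivial (and that the trivial case is handled by the normalisation $r_{i,e}=0$) is exactly the point that makes the transcription go through, and in fact supplies more detail than the paper itself gives.
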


\section{String algebras over local rings}
\label{sec-string-algs}

For the definition of a string algebra over a local ring we start with a general set up. 

\begin{setup}
    For the beginning of  \S\ref{sec-string-algs} we ignore the various assumptions  built up in  \S\ref{sec-primitive-cyces-and-nerve-partitions}, and we just let $R$ be a noetherian local ring and $Q$ be a quiver.     
    This is so that we can state necessary definitions and remarks in the appropriate generality. 
\end{setup}

\begin{notn}
\label{notn-start}
    For an ideal $\mathfrak{a}$ in $R$ let $\mathfrak{a} Q$ be the $R$-submodule of $RQ$ consisting of $R$-linear combinations of paths with coefficients in $\mathfrak{a}$. 
Let $A_{\ell}\subseteq RQ$ be the $R$-span of the paths of length $\ell \geq 0$ and let $A=\bigoplus_{\ell\geq 0}A_{\ell}$. 
\end{notn}


We now recall the definition of a string algebra over a local ring from \cite{benn-tenn-string-alg-over-loc}. 

\begin{defn}
\label{def-string-algebra}
\cite[Definition 1.4]{benn-tenn-string-alg-over-loc} A \emph{string algebra over} $R$ is an $R$-algebra of the form $\Lambda =RQ/I$ for an ideal $I$ of $RQ$ where the following conditions hold. 
\begin{itemize}
    \item $Q$ is \emph{biserial}, meaning any vertex is the head of at most $2$ arrows and the tail of at most $2$ arrows. 
    \item \cite[Definition 5.1]{benn-tenn-string-alg-over-loc} $I$ is \emph{special}, meaning that conditions (1) and (2) below hold.
    \begin{enumerate}
        \item For any arrow $b$ there exists at most $1$ arrow $a$ such that $t(a)=h(b)$ and  $ab\notin I$. 
        \item  For any arrow $b$ there exists at most $1$ arrow $c$ such that  $h(c)=t(b)$ and  $bc\notin I$.
    \end{enumerate}
    \item (c.f. \cite{CarSal1987}) \cite[Definition 4.2]{benn-tenn-string-alg-over-loc} $I$ is \emph{admissible}, meaning that conditions (3) and (4) below hold.  
        \begin{enumerate}
        \setcounter{enumi}{2}
        \item The ideal  $I$ is  \emph{bounded above}, meaning $I\subseteq A+ \maxideal Q$ and $I\cap A \subseteq A^{2}+ \maxideal Q\cap A$. 
        \item The ideal  $I$ is \emph{bounded below}, meaning  $A^{m}\subseteq I+ \maxideal Q$ and $\maxideal Q\subseteq I+\sum_{\ell=1}^{h} A_{\ell}$ for some $m,h>0$. 
\end{enumerate} 
    \item  \cite[Definition 5.9]{benn-tenn-string-alg-over-loc} $I$  is    \emph{arrow}-\emph{direct}, meaning that conditions (5) and (6)   below hold. 
        \begin{enumerate}
        \setcounter{enumi}{4}
        \item For any arrow $a$ we have $\Lambda a\cap \sum \Lambda b=0$ where the sum runs over arrows $b\neq a$ with $t(b)=t(a)$. 
        \item For any arrow $a$ we have $a\Lambda \cap\sum b\Lambda =0$ where the sum runs over arrows $b\neq a$ with $h(b)=h(a)$.
        \end{enumerate}
\end{itemize}
\end{defn}

We recall some relations between conditions (1)--(6) in \Cref{def-string-algebra} from \cite{benn-tenn-string-alg-over-loc}. 

\begin{rem} 
\label{rem-sufficient-conditons-for-main-theorem}
Later we check the technical conditions  from \Cref{def-string-algebra} hold in the setting built up in \S\ref{sec-primitive-cyces-and-nerve-partitions}. Here we discuss them.  
Note firstly that the inclusion $\Lambda a\cap \sum \Lambda b=0$ is equivalent to the inclusion 
$ RQ a \cap \sum RQ b\subseteq I
$. 
Likewise the inclusion $a\Lambda \cap\sum b\Lambda =0$ is equivalent to the inclusion $aRQ  \cap \sum bRQ \subseteq I$.    

Following \cite[Definition 4.13]{benn-tenn-string-alg-over-loc}, an ideal $I$ is  \emph{arrow}-\emph{distinct} if (5') and (6')   below hold.  
        \begin{enumerate}
        \item[(5')]  For any arrow $a$ we have $\Lambda a\cap \sum \Lambda b\subseteq \rad(\Lambda a)$ where the sum runs over arrows $b\neq a$ with $t(b)=t(a)$. 
        \item[(6')] For any arrow $a$ we have $a\Lambda \cap \sum b\Lambda \subseteq \rad(a\Lambda )$ where the sum runs over arrows $b\neq a$ with $h(b)=h(a)$.
        \end{enumerate}
    Clearly condition (5') (respectively, (6')) implies conditon (5) (respectively, (6)) from \Cref{def-string-algebra}.  
    Following \cite[Definition 4.13]{benn-tenn-string-alg-over-loc}, an ideal $I$ is  \emph{permissible} if $a\notin I$ for any arrow $a$. Provided  $I$ is bounded below, then $I$ is also bounded above if and only if $I$ is arrow-distinct and permissible; see \cite[Theorem 1.2 (3)]{benn-tenn-string-alg-over-loc}. 

    Let $Z$ be a set of paths and $I$ be an ideal with $\langle Z\rangle\subseteq I$. 
    It is immediate that if (SP1) (respectively, (SP2)) from 
    \Cref{defn-spec-quiver-and-relations} holds, then (1) (respectively, (2)) from \Cref{def-string-algebra} holds. 
\end{rem}

We note some consequences of assuming $R$ is $\maxideal$-adically complete. 

\begin{rem}
    \label{rem-completeness-gives-completeness} Let $\Lambda$ be a string algebra over a local ring $(R,\maxideal,k)$. 
    By definition this means $\Lambda=RQ/I$ where $I$ is an admissible ideal of $RQ$. 
    By \cite[Theorem 1.2(1)]{benn-tenn-string-alg-over-loc} this means $\Lambda$ is module-finite over $R$, and that 
 the jacobson radical $\jacrad$ of $\Lambda$ is generated by the arrows in $Q$. 
 By \cite[Proposition 20.6]{Lam1991} this means $\jacrad^{m}\subseteq \mathcal{M} \subseteq \jacrad$ for some integer $m>0$ where $\mathcal{M}=\Lambda \maxideal$. 
 
 Said another way, the $\jacrad$-adic topology on the ring $\Lambda$ is equivalent to the $\mathcal{M}$-adic topology. 
    This means that if $R$ is $\maxideal$-adically complete then  $\Lambda$ is $\mathcal{M}$-adically complete, and hence $\Lambda$ is $\jacrad$-adically complete; see for example \cite[Proposition 21.34]{Lam1991}.  
    Later we revisit this remark for the complete local ring $K[\vert t_{1},\dots,t_{n}\vert ]$. 
\end{rem}

In \S\ref{admissibility} we apply  \Cref{rem-sufficient-conditons-for-main-theorem} together with other results to complete the proof of   \Cref{thm-examples-of-string-algebras}. 
In \S\ref{sec-proof-of-ricke-examples} we likeiwse apply \Cref{rem-completeness-gives-completeness} in the proof of \Cref{cor-rickes-examples}.

\subsection{Proof of Theorem 1.1. }
\label{admissibility} To prove \Cref{thm-examples-of-string-algebras} we set up the hypothesis. 

\begin{setup}
\label{setup-main-theorem}
    In  \S\ref{admissibility} we let $Z$ be a set of paths of length at least $2$, we assume that $(Q,Z)$ is special, we let $V=V[1]\sqcup \dots\sqcup V[n]$ be the primitive-nerve partition associated to $(Q,Z)$,  we let $( s_{1},\dots,s_{n}) $ be a regular sequence in $R$ such that  $\maxideal=\langle s_{1},\dots,s_{n}\rangle$, and we recall that for each $v\in V$ we write  $\sigma_{v}\in RQ$ for  sum of the $Z$-primitive cycles incident at $v\in V$. 
    Finally, we define the ideal $I$ of $RQ$ by 
    \[
\begin{array}{c}
 I = \langle Z\rangle 
+
\sum_{i=1}^{n}
\langle s_{i}e_{v}-\sigma_{v}\,\mid\, v\in V[i]\rangle 
+
\langle s_{i}e_{v}\,\mid\, v\in Q_{0}\setminus V[i]\rangle. 
\end{array}
 \]
  Hence   $I=I_{Z}+I_{V}+I_{\neg V}$ from \Cref{notation-ideal-I}.
\end{setup}

We note that \Cref{setup-main-theorem} includes and adds to \Cref{setup-R-m-k}, \Cref{setup-scalars}, and \Cref{setup-regular-scalars} from \S\ref{sec-primitive-cyces-and-nerve-partitions} and \S\ref{subsec-prim-reg-rad-collections}.

\begin{lem}
\label{lem-coeff-prim-relations-give-module-finite-algebras}
If $i=1,\dots,n$ and $v\in V[i]$ then (1) and (2) hold for all $m\geq h$ for some $h>0$. 
\begin{enumerate}
        \item If  $p\in Q_{Z}^{m}( \leftarrow v)$ then  $p-s_{i}^{t}q\in I$ for some $t\geq 1$ and $q\in Q_{Z}^{\ell}(\leftarrow v)$ with $0<\ell\leq h$.
        \item If  $p\in Q_{Z}^{m}( v\leftarrow )$ then  $p-s_{i}^{t}q\in I$ for some $t\geq 1$ and $q\in Q_{Z}^{\ell}(v\leftarrow )$ with $0<\ell\leq h$.
\end{enumerate}
In particular  $A^{h}e_{v}+e_{v}A^{h}\subseteq I+ \mathfrak{a}Q$ and $\mathfrak{a}Qe_{v}+e_{v}\mathfrak{a}Q\subseteq I+\sum_{\ell=1}^{h}A_{\ell}$ where $\mathfrak{a}=\langle s_{i}\rangle$. 
\end{lem}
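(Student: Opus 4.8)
The plan is to establish (1) first, then derive (2) by the left–right symmetry built into the hypotheses, and finally deduce the two displayed inclusions by a direct bookkeeping argument. For (1), I would begin by invoking \Cref{lem-special-paths-of-arbitrary-length-initial-subpaths-or-primitive-cycles}: either there are only finitely many lengths $h$ with $Q_{Z}^{h}(\leftarrow v)\neq\emptyset$, in which case the statement is vacuous for $m$ large enough (there are no such $p$), or there is a bound $d>0$ so that every sufficiently long $Z$-admissible path with head $v$ is a right subpath of a power of some $Z$-primitive cycle $c\in Q_{Z}^{\,\prime}(v\circlearrowleft)$. In the latter case, writing such a long path $p$ as $c^{t}q$ with $q$ a shorter right subpath (choosing $t$ maximal so the residual $q$ has length at most the length of $c$ plus $d$, which will serve as $h$), I would apply \Cref{lem-one-sided-form-of-coeff-prim-relations} repeatedly. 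Each application trades a factor of $\sigma_{v}$ for $s_{i}e_{v}$ modulo $I$ — precisely because $s_{i}e_{v}-\sigma_{v}\in I_{V}$ when $v\in V[i]$ — so that $c^{t}q\equiv \sigma_{v}^{\,t}q$ is not quite what we want; instead I would run the argument the other way, noting $\sigma_{v}q = c q + (\text{terms } Z\text{-divisible})$ by \Cref{lem-prim-cycles-swapping-with-arrows}(1), hence $c^{t}q \equiv \sigma_{v}^{t}q \equiv s_{i}^{t}q \pmod I$.

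The slightly delicate point is that $\sigma_{v}$ is a \emph{sum} of primitive cycles at $v$, and when $v$ lies on more than one primitive cycle the identity $\sigma_{v}q \equiv cq \pmod{\langle Z\rangle}$ must be justified: if $q$ is a right subpath of a power of $c$, then for any other primitive cycle $c'$ at $v$ we have $c'q$ is $Z$-inadmissible, because $c'$ and $c$ would otherwise share a right arrow and thus coincide by \Cref{lem-primitive-cycles-sharing-arrows}; so the only surviving summand of $\sigma_{v}q$ modulo $\langle Z\rangle$ is $cq$. Iterating $t$ times (each time the intermediate path $c^{l}q$ is again a right subpath of a power of $c$, so the same reasoning applies) gives $\sigma_{v}^{t}q \equiv c^{t}q \pmod{\langle Z\rangle}$, and then \Cref{lem-one-sided-form-of-coeff-prim-relations} (with $s=s_{i}$) upgrades $\sigma_{v}^{t}q$ to $s_{i}^{t}q$ modulo $I$. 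This yields $p - s_{i}^{t}q \in I$ with $q\in Q_{Z}^{\ell}(\leftarrow v)$ for some $0<\ell\leq h$, completing (1); statement (2) follows by the symmetric argument, using the ``respectively'' halves of \Cref{lem-special-paths-of-arbitrary-length-initial-subpaths-or-primitive-cycles}, \Cref{lem-prim-cycles-swapping-with-arrows}, and \Cref{lem-one-sided-form-of-coeff-prim-relations}.

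For the final two inclusions, I would argue as follows. Given $m\geq h$, any element of $A^{m}e_{v}$ is an $R$-linear combination of length-$m$ paths with head $v$; those paths that are $Z$-inadmissible already lie in $I_{Z}\subseteq I$, and each $Z$-admissible one equals $s_{i}^{t}q + (\text{element of }I)$ with $t\geq 1$ by (1), hence lies in $\langle s_{i}\rangle Q + I = \mathfrak{a}Q + I$; summing gives $A^{m}e_{v}\subseteq I+\mathfrak{a}Q$, and the same for $e_{v}A^{m}$ via (2). For the other inclusion, an element of $\mathfrak{a}Qe_{v}$ is $s_{i}$ times an $R$-linear combination of paths with head $v$; splitting each such path into its $Z$-admissible and $Z$-inadmissible parts and applying (1) to the admissible long ones, every term of length exceeding $h$ is absorbed into $I$, leaving a combination of paths of length at most $h$, i.e. an element of $I+\sum_{\ell=1}^{h}A_{\ell}$; again $e_{v}\mathfrak{a}Q$ is symmetric. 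The main obstacle I anticipate is not any single step but keeping the substitution $c^{t}q \leadsto s_{i}^{t}q$ clean in the presence of multiple primitive cycles through $v$ and bounded-length residuals — that is, pinning down $h$ uniformly and verifying the congruences survive at every intermediate power; once that is set up, the rest is routine.
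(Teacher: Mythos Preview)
Your proposal is correct and follows essentially the same line as the paper: use \Cref{lem-special-paths-of-arbitrary-length-initial-subpaths-or-primitive-cycles} to factor a long $Z$-admissible path through a power $c^{d}$ of a primitive cycle at $v$, replace $c^{d}$ by $\sigma_{v}^{d}$ modulo $\langle Z\rangle$ via (SP1) together with \Cref{lem-primitive-cycles-sharing-arrows}, and then trade $\sigma_{v}$ for $s_{i}$ using the generator $s_{i}e_{v}-\sigma_{v}\in I_{V}$ (the paper packages this as the telescoping identity $s_{i}^{t}e_{v}-\sigma_{v}^{t}=(s_{i}e_{v}-\sigma_{v})\sum_{j}s_{i}^{t-1-j}\sigma_{v}^{j}$ rather than citing \Cref{lem-one-sided-form-of-coeff-prim-relations}, but the content is the same). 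One cosmetic slip: $Q_{Z}^{m}(\leftarrow v)$ denotes paths with \emph{tail} $v$, not head $v$, so for (1) the factorisation reads $p=q'c^{d}$ with $q'$ a right subpath of $c$ (the paper then sets $q=q'c$ and $t=d-1$ to keep $\ell>0$); your argument as written is really the one for (2), but since the two parts are symmetric this does not affect correctness.
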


\begin{proof}
(1) Let $i=1,\dots,n$ and let $s=s_{i}$.
We can assume $Q^{d}_{Z}(\leftarrow v)\neq \emptyset$ for all $d\geq 0$. 
Let $p\in Q_{Z}^{m}( \leftarrow v)$. 
Let $\sigma=\sigma_{v}$. 
By \Cref{lem-special-paths-of-arbitrary-length-initial-subpaths-or-primitive-cycles} there exists an integer $h'>0$ such that 
for each $r\geq h' $ every path $p'\in  Q_{Z}^{r}( \leftarrow v)$  must be a right subpath of a power of some $c\in Q_{Z}^{\,\prime}(v\circlearrowleft)$. 

By \Cref{cor-bound-on-number-of-primitive-cycles} the set $Q_{Z}^{\,\prime}(v\circlearrowleft)$ is finite. 
So we can define an integer $h>h'$ such that the length of each cycle $c\in Q_{Z}^{\,\prime}(v\circlearrowleft)$  is less than $\frac{1}{2}h$. 
Since the  $Z$-admissible path $p$ has length  $m>h>h'$, as above we must have $p=q'c^{d}$ for some $c\in Q_{Z}^{\,\prime}(v\circlearrowleft)$, some $d\geq 1$ and some right subpath $q'$ of $c$. 
Note that  $d\geq 2$ for otherwise $d=1$ and then $p=q'c$ would have length less than $2\times \frac{1}{2}h=h$. 
Let $t=d-1$ and $q=q'c$. 

Let $c'\in Q_{Z}^{\,\prime}(v\circlearrowleft)$ with $c'\neq c$. 
By \Cref{lem-primitive-cycles-sharing-arrows} $c'$ and $c$ have distinct left arrows, meaning $Z\mid c'c$ by (SP1) and since $Z\nmid cc$. 
Running through all such $c'\neq c$ we have that $c^{t+1}-\sigma^{t} c$ is a sum of paths that factor through $Z$, and so $c^{t+1}-\sigma^{t} c\in I$. 
Also note that $s\sigma=\sigma s$    and so for any $j>0$ we have $s^{j}e_{v}-\sigma^{j}=(se_{v}-\sigma)\sum_{i=0}^{j-1}s^{j-1-i}e_{v}\sigma^{i}\in I$. 
Altogether,
\[
p-s^{t}q=q'c^{t+1}-s^{t}q=q'(c^{t+1}-s^{t}c)=q'((c^{t+1}-\sigma^{t}c)-c(s^{t}e_{v}-\sigma^{t}))\in I.
\]
This completes the proof of (1). 
The proof of (2) is similar. 
We now prove the final statement. 
The $R$-module $A^{h}e_{v}$ is generated by the union $\bigcup _{m\geq h}Q_{Z}^{m}( \leftarrow v)$, and by (1) we have that any element in this union has the form $x+s^{t}q$ where $x\in I$ and $q\in \bigcup _{\ell\leq h}Q_{Z}^{\ell}( \leftarrow v)$. 

Since $s^{t}q\in \mathfrak{a}Q$ this shows $A^{h}e_{v}\subseteq I+\mathfrak{a}Q$. 
Similarly, by (2) we have $e_{v}A^{h}\subseteq I+\mathfrak{a}Q$. 
Again by (1), if $r\in R$, $j>0$ and $p\in \bigcup _{m\geq h}Q_{Z}^{m}( \leftarrow v)$ then $rs^{j}p=x+rs^{j+t}q$ for $x$, $t$ and $q$ as above. 
This shows $\mathfrak{a}Qe_{v}\subseteq I+\sum_{\ell=1}^{h}A_{\ell}$, and similarly $e_{v}\mathfrak{a}Q\subseteq I+\sum_{\ell=1}^{h}A_{\ell}$ by (2). 
\end{proof}

We now combine many of the results built up so far to check conditions from \Cref{def-string-algebra} hold. 

\begin{cor}
    \label{cor-bounded-below}
    The ideal $I$ from \Cref{setup-main-theorem}  is bounded below in sense of \Cref{def-string-algebra}(4). 
\end{cor}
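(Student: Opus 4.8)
The plan is to check directly the two inclusions demanded by \Cref{def-string-algebra}(4), namely $A^{m}\subseteq I+\maxideal Q$ for some $m>0$ and $\maxideal Q\subseteq I+\sum_{\ell=1}^{h}A_{\ell}$ for some $h>0$, by decomposing everything according to the tail of a path and treating the vertices lying in $V$ separately from those lying outside $V$. Since every path $p$ satisfies $p=pe_{t(p)}$ one has $A^{t}=\sum_{v\in Q_{0}}A^{t}e_{v}$, and since $\maxideal=\langle s_{1},\dots,s_{n}\rangle$ one also has $\maxideal Q=\sum_{v\in Q_{0}}\sum_{j=1}^{n}\langle s_{j}\rangle Qe_{v}$. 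As $Q_{0}$ is finite it is thus enough to produce, for each vertex $v$, an integer $m_{v}>0$ such that $A^{m}e_{v}\subseteq I+\maxideal Q$ for every $m\geq m_{v}$ and such that $\langle s_{j}\rangle Qe_{v}\subseteq I+\sum_{\ell=1}^{m_{v}}A_{\ell}$ for every $j$; then $m=h=\max_{v}m_{v}$ will do.

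For a vertex $v\in V[i]$ I would appeal to \Cref{lem-coeff-prim-relations-give-module-finite-algebras}, which supplies an $h_{v}>0$ with $A^{h_{v}}e_{v}\subseteq I+\langle s_{i}\rangle Q\subseteq I+\maxideal Q$ and $\langle s_{i}\rangle Qe_{v}\subseteq I+\sum_{\ell=1}^{h_{v}}A_{\ell}$. For $j\neq i$ the vertex $v$ lies in $Q_{0}\setminus V[j]$, so $s_{j}e_{v}$ is among the generators of $I_{\neg V}$ from \Cref{notation-ideal-I}, whence $\langle s_{j}\rangle Qe_{v}\subseteq RQ\,s_{j}e_{v}\subseteq I$. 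This settles such $v$ with $m_{v}=h_{v}$.

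For a vertex $v\in Q_{0}\setminus V$ the argument is different, and this is where I expect the only real work to lie. The incidence of any $Z$-primitive cycle belongs to the set of vertices it traverses, hence to $V$, so no $Z$-primitive cycle is incident at $v$; that is, $Q_{Z}^{\,\prime}(v\circlearrowleft)=\emptyset$. If $Q_{Z}^{\ell}(\leftarrow v)$ were nonempty for infinitely many $\ell$, then \Cref{lem-special-paths-of-arbitrary-length-initial-subpaths-or-primitive-cycles} would force some sufficiently long $Z$-admissible path with tail $v$ to be a right subpath of a power of a cycle in the empty set $Q_{Z}^{\,\prime}(v\circlearrowleft)$, which is impossible. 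Since a right subpath of a $Z$-admissible path with tail $v$ is again such a path, the set of $\ell$ with $Q_{Z}^{\ell}(\leftarrow v)\neq\emptyset$ is an initial segment of the nonnegative integers, hence finite; so there is $N_{v}>0$ with $Q_{Z}^{\ell}(\leftarrow v)=\emptyset$ for all $\ell\geq N_{v}$. Every path of length at least $N_{v}$ with tail $v$ is then $Z$-inadmissible, giving $A^{N_{v}}e_{v}\subseteq\langle Z\rangle\subseteq I$; and since $s_{j}e_{v}$ is a generator of $I_{\neg V}$ for every $j$, also $\langle s_{j}\rangle Qe_{v}\subseteq I$ for every $j$. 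So $m_{v}=N_{v}$ works here.

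Finally, taking $m=h$ to be the maximum of the finitely many integers $m_{v}$ gives $A^{m}e_{v}\subseteq A^{m_{v}}e_{v}\subseteq I+\maxideal Q$ and $\langle s_{j}\rangle Qe_{v}\subseteq I+\sum_{\ell=1}^{m}A_{\ell}$ for every $v$ and $j$; summing over tail vertices and over $j$ yields the two required inclusions, so $I$ is bounded below in the sense of \Cref{def-string-algebra}(4). The main obstacle is the case $v\notin V$, specifically extracting from \Cref{lem-special-paths-of-arbitrary-length-initial-subpaths-or-primitive-cycles} that there are no arbitrarily long $Z$-admissible paths with tail (resp.\ head) $v$; the case $v\in V$ is essentially the bookkeeping already carried out in \Cref{lem-coeff-prim-relations-give-module-finite-algebras}.
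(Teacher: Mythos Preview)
Your proof is correct and follows essentially the same route as the paper: decompose by vertex, invoke \Cref{lem-coeff-prim-relations-give-module-finite-algebras} for the vertices in $V$, and take a maximum. Your version is in fact more careful than the paper's one-line argument, which applies \Cref{lem-coeff-prim-relations-give-module-finite-algebras} ``for each vertex $v$ and $i=1,\dots,n$'' even though that lemma is stated only for $v\in V[i]$; you fill this gap explicitly by treating $v\notin V$ via \Cref{lem-special-paths-of-arbitrary-length-initial-subpaths-or-primitive-cycles} (forcing $Q_{Z}^{\ell}(\leftarrow v)=\emptyset$ for large $\ell$) and by handling the indices $j\neq i$ through the generators $s_{j}e_{v}\in I_{\neg V}$.
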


\begin{proof}
    For each vertex $v$ and $i=1,\dots,n$, by \Cref{lem-coeff-prim-relations-give-module-finite-algebras} we have an integer $h(v,i)>0$ such that
    \[
    \begin{array}{cc}
    A^{h(v,i)}e_{v}+e_{v}A^{h(v,i)}\subseteq I+ \langle s_{i}\rangle Q, 
    &
    \langle s_{i}\rangle Qe_{v}+e_{v}\langle s_{i}\rangle Q\subseteq I+\sum_{\ell=1}^{h(v,i)}A_{\ell},
    \end{array}
    \]
    Since $\maxideal=\langle s_{1},\dots,s_{n}\rangle$, it suffices to let $h=m=\max \{h(v,i)\colon i=1,\dots,n,v\in Q_{0}\}$ in \Cref{def-string-algebra}(4). 
\end{proof}

\begin{lem}
        \label{cor-arrow-direct}
    The ideal $I$ from \Cref{setup-main-theorem}  is arrow-direct in sense of \Cref{def-string-algebra}(5, 6). 
\end{lem}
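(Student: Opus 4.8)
The plan is to verify conditions (5) and (6) of \Cref{def-string-algebra} one at a time, in each case reducing to the multiserial decomposition already in hand. By the first observation in \Cref{rem-sufficient-conditons-for-main-theorem}, it suffices to show: for every arrow $a$, if $\xi\in RQa$ and $\xi-\eta\in I$ for some $\eta$ in the left ideal $\sum_{b} RQb$ (the sum over arrows $b\neq a$ with $t(b)=t(a)$), then $\xi\in I$; and dually, if $\xi\in aRQ$ and $\xi-\eta\in I$ for some $\eta\in\sum_{b} bRQ$ (over arrows $b\neq a$ with $h(b)=h(a)$), then $\xi\in I$. I note that no biserial hypothesis is needed here, as the argument accommodates any number of arrows sharing a given tail or head; the two engines are \Cref{lem-string-multiserial-module-property} and \Cref{lem-string-multiserial-module-property-dual}.

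I will carry out condition (5), condition (6) being symmetric. Fix an arrow $a$ with $t(a)=w$ and list all distinct arrows with tail $w$ as $a=a_1,a_2,\dots,a_m$. Given $\xi=xa\in RQa$ with $\xi-\sum_{l\geq 2}w_la_l\in I$, set $x_1=x$ and $x_l=-w_l$ for $l\geq 2$, so that $\sum_{l=1}^{m}x_la_l\in I$. Since $I$ is an ideal I can multiply on the left by each primitive idempotent $e_u$ and treat the components $e_u\xi$ separately, so I assume from now on that $x_l\in e_uRQ$ for a fixed vertex $u$. The one point that needs care is that \Cref{lem-string-multiserial-module-property} demands each $x_la_l$ be an $R$-combination of $Z$-admissible paths, whereas $x_la_l\in e_uRQe_w$ may also involve $Z$-inadmissible paths. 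This is resolved by the (routine) observation that $I_Z=\langle Z\rangle$ is exactly the $R$-span of the $Z$-inadmissible paths, so $RQ$ splits as the direct sum of $I_Z$ and the $R$-span of the $Z$-admissible paths; decomposing $x_la_l$ accordingly, its $Z$-inadmissible part lies in $I_Z\subseteq I$, while its $Z$-admissible part, being a combination of $Z$-admissible paths with right arrow $a_l$, has the form $x_l'a_l$ with $x_l'\in e_uRQ$ and $x_l'a_l\in\sum_{q\in Q_Z(u\leftarrow w)}Rq$.

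Because $\sum_{l}x_la_l\in I$ and each $Z$-inadmissible part lies in $I$, subtracting them gives $\sum_{l}x_l'a_l\in I$, and now \Cref{lem-string-multiserial-module-property} applies to yield $x_l'a_l\in I$ for every $l$. Taking $l=1$ shows that the $Z$-admissible part of $x_1a_1=e_uxa$ lies in $I$; adding back the $Z$-inadmissible part (also in $I$) gives $e_u\xi\in I$, and summing over $u$ gives $\xi\in I$. This proves (5). For (6) one repeats the argument with $\sum_{l}a_lx_l$ in place of $\sum_{l}x_la_l$, over the arrows with a common head, reducing on the right by the idempotents $e_w$ and invoking \Cref{lem-string-multiserial-module-property-dual} instead. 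I expect the only genuine work to be precisely this bookkeeping — passing to a single idempotent component and separating off the $Z$-inadmissible paths into $I_Z$ — so that the hypotheses of \Cref{lem-string-multiserial-module-property} are met verbatim; the substantive content, including the sole use of the regular sequence, has already been absorbed into \Cref{lem-canonical-form-in-I} and its consequences.
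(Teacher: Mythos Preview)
Your proof is correct and follows essentially the same route as the paper: reduce to a single idempotent component $e_u$, then invoke \Cref{lem-string-multiserial-module-property} (and dually \Cref{lem-string-multiserial-module-property-dual}) to conclude each summand $x_la_l$ lies in $I$. Your explicit splitting of $x_la_l$ into its $Z$-admissible and $Z$-inadmissible parts is a point the paper leaves implicit when applying \Cref{lem-string-multiserial-module-property}, so your version is in fact slightly more careful.
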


\begin{proof}
    Let $a$ be an arrow with tail $w$. 
    Suppose $ x_{a}a +I = \sum_{b}x_{b}b +I$ for some $x_{a},x_{b}\in RQ$ where $b$ runs through the arrows with tail $w$ that are distinct from $a$. 
    Let $z=x_{a}a-\sum_{b}x_{b}b$, and so $z\in I$. 
    If $u$ is a vertex then  $e_{u}z\in e_{u}Ie_{w}$. 
    By \Cref{lem-string-multiserial-module-property} this means $e_{u}x_{a}a\in e_{u}I e_{w}$. 
    Thus $x_{a}a\in I$, and this shows that the intersection $\Lambda a\cap \sum_{b} \Lambda b$ is trivial, where $\Lambda=RQ/I$. 
    Thus condition (5) from \Cref{def-string-algebra} holds. 
    The proof that condition (6) from \Cref{def-string-algebra} holds is similar, but uses \Cref{lem-string-multiserial-module-property-dual}. 
\end{proof}

\begin{cor}
    \label{lem-bounded-above}
    The ideal $I$ from \Cref{setup-main-theorem}  is bounded above in sense of \Cref{def-string-algebra}(3). 
\end{cor}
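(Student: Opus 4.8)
The plan is to deduce bounded-aboveness from the characterisation recalled in \Cref{rem-sufficient-conditons-for-main-theorem}: once $I$ is known to be bounded below, \cite[Theorem 1.2(3)]{benn-tenn-string-alg-over-loc} says $I$ is bounded above if and only if it is arrow-distinct and permissible. So it suffices to verify these two latter properties, both of which are essentially already in hand, together with bounded-belowness.

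First I would invoke \Cref{cor-bounded-below} to record that $I$ is bounded below. Next, for arrow-distinctness, I would observe that arrow-directness --- established in \Cref{cor-arrow-direct} --- is formally stronger: for any arrow $a$, the intersection $\Lambda a\cap\sum_{b}\Lambda b$ (the sum over arrows $b\neq a$ with $t(b)=t(a)$) is trivial by \Cref{cor-arrow-direct}, and $0\subseteq\rad(\Lambda a)$, so condition (5$'$) of \Cref{rem-sufficient-conditons-for-main-theorem} holds; the symmetric argument gives (6$'$). Hence $I$ is arrow-distinct.

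For permissibility I need $a\notin I$ for every arrow $a$. Since every element of $Z$ is a path of length at least $2$ by \Cref{setup-main-theorem}, no element of $Z$ can occur as a subpath of the length-$1$ path $a$, so $a$ is $Z$-admissible. By \Cref{cor-coeff-prim-relations-give-non-zero-admissible-paths} every $Z$-admissible path lies outside $I$, so $a\notin I$; thus $I$ is permissible.

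Combining these, $I$ is bounded below, arrow-distinct and permissible, so \cite[Theorem 1.2(3)]{benn-tenn-string-alg-over-loc} (as recalled in \Cref{rem-sufficient-conditons-for-main-theorem}) yields that $I$ is bounded above. There is no serious obstacle here; the only point deserving care is the observation that the hypothesis that $Z$ consists of paths of length at least $2$ is exactly what makes each arrow $Z$-admissible, and hence --- via \Cref{cor-coeff-prim-relations-give-non-zero-admissible-paths} --- non-zero in $RQ/I$.
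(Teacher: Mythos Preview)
Your proposal is correct and follows essentially the same route as the paper's own proof: both deduce arrow-distinctness from the arrow-directness established in \Cref{cor-arrow-direct}, check permissibility via \Cref{cor-coeff-prim-relations-give-non-zero-admissible-paths} and the length-$\geq 2$ hypothesis on $Z$, and then appeal to the equivalence in \Cref{rem-sufficient-conditons-for-main-theorem}. The only difference is that you make the dependence on bounded-belowness (\Cref{cor-bounded-below}) explicit, whereas the paper leaves it implicit.
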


\begin{proof}
    By \Cref{cor-arrow-direct} we have that $I$ is arrow-direct. 
    In particular $I$ is arrow-distinct, meaning that conditions (5') and (6') from  \Cref{rem-sufficient-conditons-for-main-theorem} hold, since conditions (5) and (6) from \Cref{def-string-algebra} hold.  
    As discussed in \Cref{rem-sufficient-conditons-for-main-theorem}, to see that condition (3) from \Cref{def-string-algebra} holds, it suffices to show that $a\notin I$ for any arrow $a$. 
    Since every element of $Z$ is a path of length at least $2$ by \Cref{setup-main-theorem}, this is immediate by \Cref{cor-coeff-prim-relations-give-non-zero-admissible-paths}. 
\end{proof}


\begin{proof}[Proof of \Cref{thm-examples-of-string-algebras}.]
    The first statement is precisely the statement of \Cref{cor-coeff-prim-relations-give-non-zero-admissible-paths}. 
    We already assumed that $(Q,Z)$ is special, meaning that conditions (1) and (2) from \Cref{def-string-algebra} hold, by \Cref{rem-sufficient-conditons-for-main-theorem}. 
    
    Assuming additionally that $(Q,Z)$ is special biserial means that $Q$ is biserial. Finally: by \Cref{lem-bounded-above} condition (3) from \Cref{def-string-algebra} holds; by \Cref{cor-bounded-below} condition (4) from \Cref{def-string-algebra} holds; and by \Cref{cor-arrow-direct} conditions (5) and (6) from \Cref{def-string-algebra} hold. 
\end{proof}

\subsection{Proof of Proposition 1.4. } 
\label{sec-proof-of-ricke-examples}

\begin{setup}
    \label{sec-proof-of-ricke-examples-again}
    In \S\ref{sec-proof-of-ricke-examples} we keep and add to the assumptions and notation of \Cref{setup-main-theorem}. 
    We specify here by setting  $R=k[\vert t_{1},\dots , t_{n}\vert ]$, the power series algebra in $n$-variables over the field $k$. 
    Let $H$ be the ideal in the path algebra $KQ$ generated by $Z$, and let $B$ be the ideal in $KQ$ generated by the arrows. 
\end{setup}

\begin{defn}
    \label{def-ricke-completed-string-algebras}
    \cite[\S3.2]{Ric2017} 
    A \emph{completed string algebra} is a quotient $\overline{KQ}/\overline{H}$ where $Q$ is biserial, $(Q,Z)$ is  special, $\overline{kQ}$ is the completion of $kQ$ in the $B$-adic topology and where $\overline{H}=\bigcap_{d>0}(H+B^{d})$. 
    Note that $\overline{kQ}/\overline{H}$ is the completion of the quotient $kQ/H$ in the $\mathcal{B}$-adic topology where $\mathcal{B}=B+H/H$. 
\end{defn}

In \Cref{lem-existence-of-the-maps} we observe the existence of a ring isomorphism. 
This result is a key observation for the proof of \Cref{cor-rickes-examples}, where we consider inverse limits of these rings. 
For the proof of \Cref{lem-existence-of-the-maps} we use \Cref{thm-coeff-prim-relations-give-non-zero-admissible-paths}.  

\begin{lem}
    \label{lem-existence-of-the-maps}
    For each integer $d>0$ there is an $R$-algebra isomorphism $ kQ/(H+B^{d})\to RQ/(I+A^{d})$ induced by the $k$-algebra embedding $kQ\to RQ$. 
\end{lem}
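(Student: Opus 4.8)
The plan is to show that the composite $k$-algebra homomorphism $\varphi\colon kQ\to RQ\twoheadrightarrow RQ/(I+A^{d})$ (here $I=I_{Z}+I_{V}+I_{\neg V}$ as in \Cref{setup-main-theorem}, with $R=k[\vert s_{1},\dots,s_{n}\vert ]$) is surjective with kernel exactly $H+B^{d}$. Granting this, $\varphi$ induces a bijective ring homomorphism $\overline{\varphi}\colon kQ/(H+B^{d})\to RQ/(I+A^{d})$, and transporting the $R$-algebra structure of the target along $\overline{\varphi}$ (one checks it corresponds to letting $s_{i}$ act by left multiplication by $\sum_{v\in V[i]}\sigma_{v}\in kQ$) exhibits $\overline{\varphi}$ as an isomorphism of $R$-algebras.

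\textbf{Surjectivity.} For any path $p$ with head $u$, the defining relations of $I_{V}$ and $I_{\neg V}$ give $s_{i}p=(s_{i}e_{u})p\equiv\sigma_{u}p$ or $0$ modulo $I$, according as $u\in V[i]$ or not; and $\sigma_{u}p$ is, modulo $I_{Z}=\langle Z\rangle$, a $k$-linear combination of $Z$-admissible paths each strictly longer than $p$. Iterating, $s^{\alpha}p$ is congruent modulo $I+A^{d}$ to a $k$-linear combination of paths of length $\geq|p|+|\alpha|$ (hence to $0$ once $|p|+|\alpha|\geq d$). In particular $\maxideal^{d}$ annihilates $RQ/(I+A^{d})$, so this ring is generated over $R/\maxideal^{d}$ by the images of the paths of length $<d$, and the $R/\maxideal^{d}$-multiples of those images already lie in the $k$-span of the images of paths, i.e.\ in $\im\varphi$. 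Hence $\varphi$ is onto.

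\textbf{Kernel.} The inclusion $H+B^{d}\subseteq\ker\varphi$ is immediate, since $H\subseteq I_{Z}$ and $B^{d}\subseteq A^{d}$. For the converse, first record that, because $\langle Z\rangle$ is the $k$-span of the $Z$-inadmissible paths, $kQ/(H+B^{d})$ has the $Z$-admissible paths of length $<d$ as a $k$-basis; so it suffices to show that a $k$-linear combination $f$ of $Z$-admissible paths of length $<d$ with $f\in I+A^{d}$ must vanish. Decomposing $f=\sum_{u,w}e_{u}fe_{w}$ and using that $I$ is an ideal, one reduces to $f=\sum_{q\in Q_{Z}(u\leftarrow w),\,|q|<d}c_{q}q$ with $c_{q}\in k$ and $u,w$ fixed. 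Write $f=\iota+a$ with $\iota\in I$ and $a\in A^{d}$; deleting from $a$ the paths that factor through $Z$ (they lie in $I_{Z}$) and keeping the $(u,w)$-component yields $a'$, an $R$-linear combination of $Z$-admissible paths in $Q_{Z}(u\leftarrow w)$ of length $\geq d$, with $f-a'\in e_{u}Ie_{w}$. Now apply \Cref{thm-coeff-prim-relations-give-non-zero-admissible-paths} to $f-a'=\sum_{q}r_{q}q$: if $f\neq0$, a shortest path $p$ with $c_{p}\neq0$ has length $<d$, hence is also a path of minimal length with $r_{p}\neq0$, so $c_{p}=r_{p}\in\maxideal$; but $0\neq c_{p}\in k$ is a unit of $R$, a contradiction. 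Thus $\ker\varphi=H+B^{d}$.

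\textbf{Main obstacle.} The whole weight of the argument lies in the kernel computation, and there in the single input \Cref{thm-coeff-prim-relations-give-non-zero-admissible-paths} (built in turn on the normal form \Cref{lem-canonical-form-in-I} and the regular-sequence argument). By comparison, surjectivity is a direct manipulation of the relations defining $I$; the only point needing care is the bookkeeping that multiplication by each $s_{i}$ strictly increases path length modulo $I$, so that $R$ acts on $RQ/(I+A^{d})$ through the finite quotient $R/\maxideal^{d}$.
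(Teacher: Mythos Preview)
Your argument is correct and follows essentially the same route as the paper. Both proofs establish surjectivity by using the relations $s_{i}e_{v}\equiv\sigma_{v}$ (for $v\in V[i]$) or $s_{i}e_{v}\equiv 0$ (otherwise) to trade each factor of $s_{i}$ for an increase in path length, and both make \Cref{thm-coeff-prim-relations-give-non-zero-admissible-paths} the pivot of the kernel computation. The only organisational difference is that you first reduce to a $k$-linear combination $f$ of $Z$-admissible paths of length $<d$ and show $f\in I+A^{d}\Rightarrow f=0$, whereas the paper shows the inclusion $kQ\cap(I+A^{d})\subseteq H+B^{d}$ directly and finishes with the constant-term map $\zeta\colon RQ\to kQ$; your reduction makes that last step unnecessary. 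The parenthetical identifying the transported $R$-action with left multiplication by $\sum_{v\in V[i]}\sigma_{v}$ is a nice touch (and is justified by \Cref{lem-CB-sum-of-prim-cycles}), though for the paper's downstream use only the ring isomorphism is needed.
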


\begin{proof}
    By \Cref{sec-proof-of-ricke-examples-again} we have that $k$ is a subring of $R$, and so $kQ$ is a $k$-subalgebra of $RQ$. 
    Since $B$ is the ideal in $kQ$ generated by the arrows, and since $A$ is the ideal in $RQ$ generated by the arrows, we have the inclusion $A\subseteq B$ of $k$-vector spaces. 
    Likewise, recalling that $I=I_{Z}+I_{V}+I_{\neg V}$ where $I_{Z}$ is the ideal in $RQ$ generated by $Z$, and considering that $H$ is the ideal in $kQ$ generated by $Z$, we have $H\subseteq I_{Z}$ as $k$-vector spaces. 
    Hence for each $d>0$ we have $H+B^{d}\subseteq I+A^{d}$ meaning that the $k$-algebra embedding induces a $k$-algebra homomorphism $\varphi_{d}\colon kQ/(H+B^{d})\to RQ/(I+A^{d})$. 
    It remains to show $\varphi_{m}$ is bijective. 
    
    Fix $d$ and let $\varphi=\varphi_{d}$. 
    We start by proving $\varphi$ is injective, for which it suffices to fix vertices $u$ and $w$ and then prove $e_{u}(kQ\cap (A^{d}+I))e_{w}\subseteq e_{u}(B^{d}+H)e_{w}$. 
    So suppose $x+y\in e_{u}kQe_{w}$ where $x\in e_{u}B^{d}e_{w}$ and $y\in e_{u}Ie_{w}$. 
    Write $y=\sum r_{q}q$ where $r_{q}\in R$ and where $q$ runs through the paths with $t(q)=w$ and $h(q)=u$. 
    
    Define $r'_{q}\in R$ for each such $q$  by setting $r'_{q}=r_{q}$ when $q\in Q_{Z}(u\leftarrow w)$, meaning when $q$ is $Z$-admissible, and let $r'_{q}=0$ otherwise. 
    Let $y'=\sum r'_{q}q$, and note that in the notation so far we have 
    \[
    \begin{array}{cc}
        \sum_{q\in Q_{Z}(u\leftarrow w)}r_{q}q=y'=  (y'-y)+y=(y'-y)+(x+y)-x\in e_{u}Ie_{w}.
        & 
        (*)
    \end{array}
    \] 
    For the moment suppose $y'\neq 0$. 
    Choose a path $p\in Q_{Z}(u\leftarrow w)$ of minimal length $\ell$ such that $r_{p}\neq 0$. 
    By \Cref{thm-coeff-prim-relations-give-non-zero-admissible-paths} we have that $r_{p}\in \maxideal$. 
    We claim that  $\ell\geq d$. 
    For a contradiction suppose $\ell<d$. 
    Considered as an element of $RQ$, the coefficient of $p$ in $x\in A^{d}$ must therefore be $0$. 
    Since $p$ is $Z$-admissible, and since $y'-y$ is an $R$-linear combination of $Z$-inadmissible paths, the coefficient of $p$ in $y'-y$ must also be $0$. 
    But by $(*)$ this means $r_{p}$ is equal to the coefficient of $p$ in $x+y\in kQ$, and so $0\neq r_{p}\in k\cap \maxideal=0$, a contradiction. 
    
    Hence our claim holds. 
    Consider the ring homomorphism $R\to k$ given by sending any power series $f(t_{1},\dots,t_{n})\in R$ to its constant term, that is, its evaluation $f(0,\dots,0)$ at $t_{1}=\dots=t_{n}=0$. Consider the induced ring homomorphism $\zeta\colon RQ\to kQ$, and note that the restriction of $\zeta$ to $kQ$ is the identity.  
    Note that the image of $A^{d}$ under $\zeta$ is equal to $B^{d}$. 
    Likewise the image of $I_{Z}$ under $\zeta$ is  equal to $H$. 
     By our claim we have $y'\in A^{d}$ which means $x+y'\in A^{d}$ and, by construction, $y-y'\in I_{Z}$. 
    Altogether we have 
    \[
    kQ\ni x+y=\zeta(x+y)=\zeta(x+y' + y - y')=\zeta(x+y')+\zeta(y-y')\in B^{d}+H.
    \]
    This completes our proof that $\varphi$ is injective. We secondly prove $\varphi$ is surjective. 
    It suffices to let $r\in R$ and $p$ be any $Z$-admissible path of length less than $d$, and then prove that  $rp+B^{d}+I$ is in the image of $\varphi$. 
    Let $v=h(p)$. 
    In the situation we are in, the element $r\in R$ is a power series $r=f(t_{1},\dots,t_{n})$. 
    Note that if $v\notin V$ then $rp+I+A^{m}=\zeta(r)p+I+A^{m}$ which is the image of $\zeta(r)p+H+B^{m}$ and there is nothing to prove.  
    Assume otherwise, so that we have $v\in V[i]$ for some $i=1,\dots,n$. 
    In this case $t_{j}p\in I_{\neg V}$ for all $j\neq i$. 
    Consider the power series $g(t_{i})\in k[\vert t_{i} \vert]$ defined by taking $t_{j}=0$ in $f(t_{1},\dots,t_{n})$ for each $j\neq i$. 
    So there is a scalar $\lambda_{h}\in k$ for each integer $h\geq 0$ such that 
    $g(t_{i})=\sum_{h}\lambda_{h}t_{i}^{h}$. 
    Note that $tp+I=\sigma_{v}p$. 
    Furthermore there is an integer $\ell$ such that the length of $\sigma^{h}_{v}p\in A^{d}$ for each $h>\ell$. 
    Altogether this means 
    \[
    \begin{array}{c}
    rp+I+A^{d}=g(t_{i})p+I+A^{d}=\sum_{h=0}^{\ell}\lambda_{h}\sigma_{v}^{h} p+I+A^{d}=
    \varphi
    \left(
    \sum_{h=0}^{\ell}\lambda_{h}\sigma_{v}^{h} p+H+B^{d}
    \right)
    \end{array}
    \]
    as required for the surjectivity of $\varphi$. 
\end{proof}

\begin{rem}
\label{rem-complete-ring-isos}
    Let $\Psi$ be a ring and $\Omega$ be rings, let $\mathcal{M}$ be an ideal in $\Psi$ and let $\mathcal{N}$ be an ideal in $\Omega$. 
    Suppose that for each integer $d>0$ there is a ring isomorphism $\chi_{d}\colon \Psi/\mathcal{M}^{d}\to\Omega/\mathcal{N}^{d}$ satisfying the following condition.
    \[
    \begin{array}{cc}
    (*)_{d}
    &
    \text{If $\chi_{d}(\psi+\mathcal{M}^{d})=\omega+\mathcal{N}^{d}$,  $\chi_{d+1}(\psi'+\mathcal{M}^{d+1})=\omega'+\mathcal{N}^{d+1}$ and $\psi-\psi'\in\mathcal{M}^{d+1}$ then $\omega-\omega'\in\mathcal{N}^{d}$.}  
    \end{array}
    \]
    Then the $\mathcal{M}$-adic completion of $\Psi$ is isomorphic to the $\mathcal{N}$-adic completion of $\Omega$. 
    Too see this is a straightforward application of the condition $(*)_{d}$ together with the universal property of the involved (inverse) limits. 
\end{rem}

\begin{proof}[Proof of \Cref{cor-rickes-examples}.]
    Recall $\jacrad=\rad(RQ/I)$ is the ideal generated by the arrows in $Q$. 
    By \Cref{def-ricke-completed-string-algebras} the completed string algebra $\overline{KQ}/\overline{H}$ is isomorphic to the $\mathcal{B}$-adic completion $\overline{\Gamma}$ of $\Gamma=kQ/H$ where $\mathcal{B}=B+H/H$, the ideal in $\Gamma$ generated by the arrows. 
    Note that for each integer $d>0$ we have ring isomorphisms
    \[
    \begin{array}{ccc}
    \Lambda /\jacrad^{d}=\dfrac{RQ/I}{(A+I)^{d}/I}\cong \dfrac{RQ}{A^{d}+I},
         &  
     \Gamma/\mathcal{B}^{d}=\dfrac{kQ/H}{(B+H)^{d}/H}\cong \dfrac{kQ}{B^{d}+H}. 
    \end{array}
    \]
    By \Cref{lem-existence-of-the-maps} there is an isomorphism between these quotients satisfying condition $(*)_{d}$ of \Cref{rem-complete-ring-isos}. 
    Thus we have that the $\jacrad$-adic completion of $\Lambda$ is isomorphic to $\overline{\Gamma}$, and since $R$ is $\maxideal$-adically complete this means $\Lambda$ is isomorphic to $\overline{KQ}/\overline{H}$ by \Cref{rem-completeness-gives-completeness}. 
\end{proof}

\section{Examples of string algebras over regular local rings}
\label{sec-examples}





We use \Cref{thm-examples-of-string-algebras} to generate examples of string algebras over regular local rings.  
Regular local rings with interesting properties already appear in literature, see for example: the book by Nagata \cite[Appendix A1]{Nagata-local-rings}; the article of Valabrega \cite{Valabrega-regular-local-rings-and-excellent-rings}; and the more recent article of Nishimura \cite[Examples 4.3--4.5]{Nishimure-A-few-examples-of-local-rings-I}. 
For brevity and simplicity we consider complete regular local rings, whose structure is determined by the Cohen structure theorem. 
Note that examples in case $R=k[\vert t_{1},\dots,t_{n}\vert ]$ were given by \Cref{cor-rickes-examples}.

In each example we need only specify  the quiver $Q$ and relations $Z$ defining the special pair $(Q,Z)$, the primitive-nerve partition $V=V[1]\sqcup\dots \sqcup V[n]$, the local ring $R$ of Krull dimension $n$, and the regular sequence of generators $(s_{1},\dots,s_{n})$ of the maximal ideal $\maxideal$. 

We start by considering a discrete valuation domain. By doing so we begin by recovering examples over  from \cite[\S6]{benn-tenn-string-alg-over-loc}. 
We then generate other  more complicated examples

\subsection{Discrete valuation rings}
\label{subsec-examples-dvr}

In \Cref{subsec-examples-dvr} we let $R$ be a discrete valuation ring, meaning that as a local noetherian ring $R$ is integrally closed and has Krull dimension $1$. 
In order to apply \Cref{thm-examples-of-string-algebras}, for the choices of $Q$ and $Z$ we make, we must have that primitive-nerve partition has $1$ part. 
In other words, we need to choose examples of $Q$ and $Z$ such that (the pair $(Q,Z)$ is special, and such that) for any $Z$-primitive cycles $c_{0}$ and $c_{m}$ there exist $Z$-primitive cycles $c_{1},\dots,c_{m-1}$ such that $V(c_{i})\cap V(c_{i+1})\neq \emptyset$ for each $i=0,\dots, m-1$.  

We will write $\pi$ for the \emph{uniformizer}, that is, the single generator of the maximal ideal $\maxideal$. 
Because of what has been discussed above, note that for each vertex $v\in V$ the element $\pi e_{v}+I\in \Lambda$ acts as the sum of all of the $Z$-primitive cycles incident at $v$. 

\subsubsection{Examples from \cite[\S6.3]{benn-tenn-string-alg-over-loc}}

Ignoring the top row that labels the columns, in rows $1$--$5$ 
 of the following table we recover the example of a string algebra over $R$ from  \cite[\S6.3.1--\S6.3.5]{benn-tenn-string-alg-over-loc}, respectively, by \Cref{thm-examples-of-string-algebras}. 
 Where we note a presentation of the corresponding string algebra, in the right-most column, the overlaying equalities should be read as \emph{identification in the residue field}. 
 Specifically, for example, in the second row one is considering the subring of $R\times R$ given by pairs $(r,r')$ with $r,r'\in R$ such that $r-r'\in \maxideal$. This notation follows that adopted by Ringel and Roggenkamp \cite[(2.14)~Remarks~(iv)]{RinRog1979} and  Wiedemann \cite[p.~314]{Wie1986}.  
\begin{table}[H]
    \centering
    \begin{tabular}{|c|c|c|c|}
    \hline
       Quiver $Q$  
       & 
       Zero-relations $Z$
       & 
       $Z$-primitive cycles 
       &
       String algebra $\Lambda=RQ/I$
       \\
       \hline 
       \begin{tikzcd}[column sep=0.7cm, row sep=0.3cm]
1\arrow[out=150,in=210,loop,  "", "a"',distance=0.6cm]
\arrow[r, bend right, swap,"b"]
&
2\arrow[l,"c", swap, bend right]
\end{tikzcd}
       &
       $bc, \,\, a^{2}$
       &
       $acb,\,\,cba,\,\,bac$
       &
$\left( \begin{smallmatrix} 
\tikznode{15}{ {$R$}} \hspace{2mm} & 
\tikznode{16}{ {$\maxideal$}} \hspace{2mm} & 
\tikznode{17}{ {$\maxideal$}} \\
\tikznode{27}{ {$R$}}\hspace{2mm}  &
\tikznode{28}{ {$R$}}\hspace{2mm}  & 
\tikznode{29}{ {$\maxideal$}} \\
\tikznode{37}{ {$R$}}\hspace{2mm}  &
\tikznode{38}{ {$R$}}\hspace{2mm}  & 
\tikznode{39}{ {$R$}}
\end{smallmatrix}  \right)
\tikz[overlay,remember picture]{\draw[draw, double, thick](15) to [bend right=20] (39)}$
       \\
       \hline 
       \begin{tikzcd}[column sep=0.7cm, row sep=0.3cm]
1\arrow[out=150,in=210,loop,  "", "a"',distance=0.6cm]
\arrow[out=330,in=30,loop,  "", "b"',distance=0.6cm]
\end{tikzcd}
       &
       $ab,\,\,ba$
       &
       $a,\,\,b$
       &
       $ 
\tikznode{15}{ {$R$}} \hspace{2mm} \times 
\hspace{2mm}
\tikznode{39}{ {$R$}}
\tikz[overlay,remember picture]{\draw[draw, double, thick](15) to [bend right=30] (39)}$
       \\
       \hline 
       \begin{tikzcd}[column sep=0.7cm, row sep=0.3cm]
1\arrow[out=150,in=210,loop,  "", "a"',distance=0.6cm]
\arrow[out=330,in=30,loop,  "", "b"',distance=0.6cm]
\end{tikzcd}
       &
       $ a^{2},\,\,b^{2}$
       &
       $ab,\,\,ba$
       &
       $\left( \begin{smallmatrix} 
\tikznode{15}{ {$R$}} \hspace{2mm} & 
\tikznode{16}{ {$\maxideal$}}  \\
\tikznode{38}{ {$R$}}\hspace{2mm}  & 
\tikznode{39}{ {$R$}}
\end{smallmatrix}  \right)
\tikz[overlay,remember picture]{\draw[draw, double, thick](15) to [bend right=20] (39)}$
       \\
       \hline 
       \begin{tikzcd}[column sep=0.7cm, row sep=0.3cm]
1
\arrow[rr, shift right = 1mm, swap,"a"]\arrow[rr, shift right = 0.5mm, bend right=35,looseness=1, swap, "b"]
&&
2\arrow[ll,"c", shift right = 1mm, swap]
\arrow[ll, shift right = 0.5mm, bend right=35,looseness=1, swap, "d"]
\end{tikzcd}
       &
      $ ad,\,\,da,\,\,bc,\,\,cb$
       &
       $ac,\,\,ca,\,\,bd,\,\,db$
       &
       $\left( \begin{smallmatrix} 
\tikznode{15}{ {$R$}} \hspace{2mm} & 
\tikznode{16}{ {$\maxideal$}} \\
\tikznode{38}{ {$R$}}\hspace{2mm}  & 
\tikznode{39}{ {$R$}}
\end{smallmatrix}  \right)
\hspace{2mm}
\times
\hspace{2mm}
\left( \begin{smallmatrix} 
\tikznode{17}{ {$R$}} \hspace{2mm} & 
\tikznode{18}{ {$\maxideal$}}  \\
\tikznode{40}{ {$R$}}\hspace{2mm}  & 
\tikznode{41}{ {$R$}}
\end{smallmatrix}  \right)
\tikz[overlay,remember picture]{\draw[draw, double, thick](15) to [bend left=20] (17);\draw[draw, double, thick](41) to [bend left=20] (39)}$
       \\
       \hline 
       \begin{tikzcd}[column sep=0.7cm, row sep=0.3cm]
1\arrow[out=150,in=210,loop,  "", "a"',distance=0.6cm]
\arrow[r, bend right, swap,"b"]
&
2\arrow[l,"c", swap, bend right]
\arrow[out=330,in=30,loop,  "", "d"',distance=0.6cm]
\end{tikzcd}
       &
       $ac,\,\,ba,\,\,db,\,\,cd$
       &
       $a,\,\,bc,\,\,cb,\,\,d$
       &
       $\tikznode{15}{ {$R$}} 
\hspace{2mm} \times \hspace{2mm} 
\tikznode{39}{ {$R$}}
\hspace{2mm}
\times
\hspace{2mm}
\left( \begin{smallmatrix} 
\tikznode{17}{ {$R$}} \hspace{2mm} & 
\tikznode{18}{ {$\maxideal$}} \\
\tikznode{40}{ {$R$}}\hspace{2mm}  & 
\tikznode{41}{ {$R$}}
\end{smallmatrix}  \right)
\tikz[overlay,remember picture]{\draw[draw, double, thick](15) to [bend left=20] (17);\draw[draw, double, thick](41) to [bend left=20] (39)}$
       \\
       \hline 
    \end{tabular}
    \label{tab:my_label}
\end{table}

\subsubsection{Principal block of the Mathieu $11$-group}
\label{Mathieu}
\cite{Roggenkamp-rep-theory-blocks} 
In \S\ref{Mathieu} we define the $(Q,Z)$ by
\[
\begin{array}{cc}
Q=
\begin{tikzcd}[column sep=0.7cm, row sep=0.1cm]
&
&
5\arrow[dl,"j", swap]\arrow[out=60,in=120,loop,  "", "i"',distance=0.6cm]
&
\\
1\arrow[out=150,in=210,loop,  "", "a"',distance=0.6cm]
\arrow[r, bend right, swap,"b"]
&
2\arrow[l,"c", swap, bend right]\arrow[dr,"d", swap]
&
&
4\arrow[ul,"h", swap]\arrow[out=330,in=30,loop,  "", "g"',distance=0.6cm]
\\
&
&
3\arrow[ur,"f", swap]\arrow[out=240,in=300,loop,  "", "e"',distance=0.6cm]
&
\end{tikzcd}\,
&
\begin{array}{c}
 Z=\,\{ac,\,ba,\,db,\,ed,\,fe,\,gf,\,hg,\,ih,\,ji\},
\end{array}
\end{array}
\]
The $Z$-primitive cycles here are $a$, $cb$, $jhfd$, $bc$, $e$, $djhf$, $g$, $fdjh$, $i$ and $hfdj$. 

Taking $I$ as in \Cref{thm-examples-of-string-algebras}, the string algebra $\Lambda=RQ/I$ has the presentation
\[
\tikznode{15}{ {$R$}} 
\hspace{2mm} 
\times
\hspace{2mm}
\left( \begin{smallmatrix} 
\tikznode{17}{ {$R$}} \hspace{2mm} & 
\tikznode{18}{ {$\maxideal$}} \\
\tikznode{40}{ {$R$}}\hspace{2mm}  & 
\tikznode{41}{ {$R$}}
\end{smallmatrix}  \right)
\hspace{2mm} 
\times
\hspace{2mm}
\left( \begin{smallmatrix} 
\tikznode{39}{ {$R$}} \hspace{2mm} & 
\tikznode{116}{ {$\maxideal$}} \hspace{2mm} & 
\tikznode{117}{ {$\maxideal$}} \hspace{2mm}
& 
\tikznode{118}{ {$\maxideal$}} \\
\tikznode{127}{ {$R$}}\hspace{2mm}  &
\tikznode{128}{ {$R$}}\hspace{2mm}  & 
\tikznode{129}{ {$\maxideal$}} \hspace{2mm}
& 
\tikznode{130}{ {$\maxideal$}} \\
\tikznode{137}{ {$R$}}\hspace{2mm}  &
\tikznode{138}{ {$R$}}\hspace{2mm}  & 
\tikznode{139}{ {$R$}} \hspace{2mm} & 
\tikznode{140}{ {$\maxideal$}} \\
\tikznode{141}{ {$R$}}\hspace{2mm}  &
\tikznode{142}{ {$R$}}\hspace{2mm}  & 
\tikznode{143}{ {$R$}} \hspace{2mm} & 
\tikznode{144}{ {$R$}}
\end{smallmatrix}  \right)
\hspace{2mm} 
\times
\hspace{2mm}
\tikznode{2}{ {$R$}} 
\hspace{2mm} 
\times
\hspace{2mm}
\tikznode{1}{ {$R$}} 
\hspace{2mm} 
\times
\hspace{2mm}
\tikznode{3}{ {$R$}} 
\tikz[overlay,remember picture]{\draw[draw, double, thick](15) to [bend left=20] (17);\draw[draw, double, thick](41) to [bend left=20] (39);\draw[draw, double, thick](128) to [bend left=10] (1);\draw[draw, double, thick](139) to [bend right=25] (2);\draw[draw, double, thick](144) to [bend right=10] (3)}
\]
We now explain how this algebra is related to a ring  studied by Roggenkamp \cite{Roggenkamp-rep-theory-blocks}, which is Morita equivalent to the principal block of the Mathieu 11-group. 
So far we have assumed $R$ is a discrete valuation ring. From now on in \Cref{Mathieu} we shall be more specific, by letting $R=\compactBZhat_{11}$, the  $11$-adic integers, which is the ring of integers of the field $\compactBQhat_{11}$ of $11$-adic numbers.

Note that given $\zeta\in\mathbb{C}$ is a primitive $11^{\text{th}}$-root of unity we have a group isomorphism $\mathrm{Gal}(\compactBQhat_{11}(\zeta))\cong (\mathbb{Z}/11\mathbb{Z})^{\times}$ between the Galois group and the multiplicative group of integers modulo $11$, which is isomorphic to the cyclic group of order $10$. 
Hence, where $C_{5}$ denote the cyclic group of order $5$, there is an  injective group homomorphism $\alpha\colon C_{5}\to \mathrm{Gal}(\compactBQhat_{11}(\zeta))$ defining the subring $S=\mathrm{Fix}_{G}(\compactBZhat_{11}[\zeta])$ of $\compactBZhat_{11}[\zeta]$ consisting of elements fixed by the restriction of automorphisms in $\im(\alpha)$. 

In summary, by letting $\omega=\zeta+\zeta^{3}+\zeta^{4}+\zeta^{5}+\zeta^{9}\in \compactBZhat_{11}[\zeta]$ we have that $\omega^{2}-\omega+1=0$ since $\sum_{i=0}^{10}\zeta^{i}=0$. 
Moreover, we have that any $s\in S$ has the form $r+r'\omega$ where $r,r'\in\compactBZhat_{11}$. 
There is an embedding of $\Lambda$ into the $R$-algebra $B$ described in \cite[Example~I,~p.~524]{Roggenkamp-rep-theory-blocks} induced by the inclusion $R\subseteq S$. 
Indeed, the presentation of $B$ by Roggenkamp is just as we have presented $\Lambda$, but where the right-most copy of $R$ is replaced with $S$. 

\subsection{Non-trivial nerve partitions}

We now consider more examples that are outside the scope of \Cref{cor-rickes-examples} and of \Cref{subsec-examples-dvr}. 
Although we could work more generally, we stick to a concrete situation, where $R=\compactBZhat_{p}[\vert t\vert ]$, the Iwasawa algebra of power series in the variable $t$ with coefficients from $\compactBZhat_{p}$, for a fixed prime integer $p>0$. 
We now apply \Cref{thm-examples-of-string-algebras} for this choice of $R$ in two different examples. 

Let $s_{1}=p$ and $s_{2}=t$, noting that $\maxideal$ is generated by the regular sequence $(p,t)$.

\subsubsection{The running example from \S\ref{sec-primitive-cyces-and-nerve-partitions} and \S\ref{subsec-prim-reg-rad-collections}. }

Recall the details from \Cref{example-running-zeroth-non-trivial}, \Cref{example-running-first-non-trivial}
 and \Cref{example-running-second-non-trivial}. 
Here the ideal $I$ from \Cref{thm-examples-of-string-algebras} is generated by the union of $Z$ and the relations
\[
\begin{array}{c}
 \begin{array}{ccccc}
pe_{1}-x-a_{5}b_{1}, &
pe_{2}-b_{1}a_{5}, &
te_{4}-a_{2}b_{2}, &
te_{5}-b_{2}a_{2}-a_{1}zb_{3}, &
te_{6}-zb_{3}a_{1}-b_{3}a_{1}z,
\end{array}
\\
\begin{array}{ccccccc}
te_{1},\hspace{9mm} &
te_{2},\hspace{9mm} &
te_{3},\hspace{9mm} &
pe_{3},\hspace{9mm} &
pe_{4},\hspace{9mm} &
pe_{5},\hspace{9mm} &
pe_{6}. 
\end{array}
\end{array}
\]
The quotient $\Lambda=RQ/I$ can be seen to be isomorphic to the $R$-algebra
\[
\tikznode{1}{ {$\compactBZhat_{p}$}} 
\hspace{1mm}
\times
\hspace{1mm}
\left( \begin{smallmatrix} 
\tikznode{15}{ {$\compactBZhat_{p}$}} \hspace{2mm} & 
\tikznode{16}{ {p$\compactBZhat_{p}$}} \\
\tikznode{28}{ {$\compactBZhat_{p}$}}\hspace{2mm}  & 
\tikznode{29}{ {$\compactBZhat_{p}$}} 
\end{smallmatrix}  \right) 
\hspace{1mm}
\times
\hspace{1mm}
\left( \begin{smallmatrix} 
\tikznode{43}{ {$\mathbb{F}_{p}$}}\hspace{2mm}  & 
\tikznode{44}{ {$\mathbb{F}_{p}$}}  \\
\tikznode{56}{ {$0$}}\hspace{2mm}  & 
\tikznode{57}{ {$\mathbb{F}_{p}$} }
\end{smallmatrix}  \right) 
\hspace{1mm}
\times
\hspace{1mm}
\tikznode{71}{ {$\cfrac{\mathbb{F}_{p}[t]}{(t^{\ell})}$}}
\hspace{1mm}
\times
\hspace{1mm}
\left( \begin{smallmatrix} 
\tikznode{85}{ {$\mathbb{F}_{p}$}}\hspace{1mm}  & 
\tikznode{86}{ {$\mathbb{F}_{p}$}}  \\
\tikznode{98}{ {$0$}}\hspace{1mm}  &
\tikznode{99}{ {$\mathbb{F}_{p}$}}
\end{smallmatrix}  \right)
\hspace{1mm}
\times
\hspace{1mm}
\left( \begin{smallmatrix} 
\tikznode{113}{ {$\mathbb{F}_{p}[\vert t\vert ]$}}\hspace{1mm}  & 
\tikznode{114}{ {$t\mathbb{F}_{p}[\vert t\vert ]$}}   \\
\tikznode{126}{ {$\mathbb{F}_{p}[\vert t\vert ]$}}\hspace{1mm}  & 
\tikznode{127}{ {$\mathbb{F}_{p}[\vert t\vert ]$}}
\end{smallmatrix}  \right) 
\hspace{1mm}
\times
\hspace{1mm}
\left( \begin{smallmatrix} 
\tikznode{141}{ {$\mathbb{F}_{p}[\vert t\vert ]$}}\hspace{1mm} & 
\tikznode{142}{ {$t\mathbb{F}_{p}[\vert t\vert ]$}}\hspace{1mm} & 
\tikznode{143}{ {$t\mathbb{F}_{p}[\vert t\vert ]$}} \\
\tikznode{154}{ {$\mathbb{F}_{p}[\vert t\vert ]$}}\hspace{1mm} & 
\tikznode{155}{ {$\mathbb{F}_{p}[\vert t\vert ]$}}\hspace{1mm} & 
\tikznode{156}{ {$t\mathbb{F}_{p}[\vert t\vert ]$}} \\
\tikznode{167}{ {$\mathbb{F}_{p}[\vert t\vert ]$}}\hspace{1mm} & 
\tikznode{168}{ {$\mathbb{F}_{p}[\vert t\vert ]$}}\hspace{1mm} & 
\tikznode{169}{ {$\mathbb{F}_{p}[\vert t\vert ]$}} 
\end{smallmatrix}  \right) 
\tikz[overlay,remember picture]{\draw[draw, double, thick](1) to [bend left=25] (15);
\draw[draw, double, thick](29) to [bend left=20] (43);
\draw[draw, double, thick](57) to [bend left=20] (71);
\draw[draw, double, thick](71) to [bend left=20] (85);
\draw[draw, double, thick](99) to [bend right=15] (113);
\draw[draw, double, thick](127) to [bend left=15] (141);\draw[draw, double, thick](155) to (169)}
\]

\subsubsection{The example from the introduction}
\label{example-intro-example-revisited}
   Revisiting \Cref{example-introduction} and \Cref{exampple-introduction-again} from the introduction, here the string algebra over $R$ given by \Cref{thm-examples-of-string-algebras} has the presentation  
\[
\left( \begin{smallmatrix} 
\tikznode{15}{ {$\compactBZhat_{p}$}} \hspace{2mm} & 
\tikznode{16}{ {$p\compactBZhat_{p}$}} \hspace{2mm} & 
\tikznode{17}{ {$p\compactBZhat_{p}$}} \\
\tikznode{27}{ {$\compactBZhat_{p}$}}\hspace{2mm}  &
\tikznode{28}{ {$\compactBZhat_{p}$}}\hspace{2mm}  & 
\tikznode{29}{ {$p\compactBZhat_{p}$}} \\
\tikznode{37}{ {$\compactBZhat_{p}$}}\hspace{2mm}  &
\tikznode{38}{ {$\compactBZhat_{p}$}}\hspace{2mm}  & 
\tikznode{39}{ {$\compactBZhat_{p}$}}
\end{smallmatrix}  \right)
\hspace{2mm}
\times
\hspace{2mm}
\left( \begin{smallmatrix} 
\tikznode{43}{ {$\compactBZhat_{p}$}}\hspace{2mm}  & 
\tikznode{44}{ {$p\compactBZhat_{p}$}}  \\
\tikznode{56}{ {$\compactBZhat_{p}$}}\hspace{2mm}  & 
\tikznode{57}{ {$\compactBZhat_{p}$} }
\end{smallmatrix}  \right)
\hspace{2mm}
\times
\hspace{2mm}
\left( \begin{smallmatrix} 
\tikznode{85}{ {$\mathbb{F}_{p}$}}\hspace{2mm}  & 
\tikznode{86}{ {$\mathbb{F}_{p}$}}\hspace{2mm}  & 
\tikznode{87}{ {$\mathbb{F}_{p}$}}  \\
\tikznode{97}{ {$0$}}\hspace{2mm}  &
\tikznode{98}{ {$\mathbb{F}_{p}$}}\hspace{2mm}  &
\tikznode{99}{ {$\mathbb{F}_{p}$}} \\
\tikznode{107}{ {$0$}}\hspace{2mm}  &
\tikznode{108}{ {$0$}}\hspace{2mm}  &
\tikznode{109}{ {$\mathbb{F}_{p}$}} \\
\end{smallmatrix}  \right)
\hspace{2mm}
\times
\hspace{2mm}
 \begin{smallmatrix} 
\tikznode{71}{ {$\cfrac{\mathbb{F}_{p}[t]}{(t^{\ell})}$}}  
\end{smallmatrix} 
\hspace{2mm}
\times
\hspace{2mm}
 \begin{smallmatrix} 
\tikznode{72}{ {$\mathbb{F}_{p}[\vert s\vert ]$}}  
\end{smallmatrix}   
\tikz[overlay,remember picture]{\draw[draw, double, thick](15) to [bend right=20] (39);\draw[draw, double, thick](39) to [bend right=20]  (43);\draw[draw, double, thick](57) to [bend right=20]  (85);\draw[draw, double, thick](98) to [bend left=20] (72);\draw[draw, double, thick](109) to [bend left=30] (71)}
\]

\begin{acknowledgements}
The author gratefully acknowledges generous support by the Danish National Research Foundation (DNRF156); 
 the Independent Research Fund Denmark (1026-00050B); 
 and the Aarhus University Research Foundation (AUFF-F-2020-7-16). 
\end{acknowledgements}

\bibliographystyle{abbrv}
\bibliography{biblio}

\end{document}